%% file: main.tex
\documentclass[12pt]{article}

\input{header}

\title{Efficient TV regularization of large-scale linear inverse problems via the SCD semismooth$^*$ Newton method with applications in tomography}
\author{
Helmut Gfrerer\footnote{Johann Radon Institute for Computational and Applied Mathematics (RICAM), Altenbergerstra{\ss}e 69, A-4040 Linz, Austria (helmut.gfrerer@ricam.oeaw.ac.at)}\,\,\textsuperscript{,}\footnote{ Institute of Information Theory and Automation, Czech Academy of
Sciences, 18208 Prague, Czech Republic (gfrerer@utia.cas.cz)} ,
Simon Hubmer\footnote{Johannes Kepler University Linz, Institute of Industrial Mathematics, Altenbergerstra{\ss}e 69, A-4040 Linz, Austria, (simon.hubmer@jku.at), \textbf{Corresponding author}} ,
Stefan Kindermann\footnote{Johannes Kepler University Linz, Institute of Industrial Mathematics, Altenbergerstra{\ss}e 69, A-4040 Linz, Austria,
(kindermann@indmath.uni-linz.ac.at).}
\\
Jaakko Kultima\footnote{Johann Radon Institute Linz, Altenbergerstra{\ss}e 69, A-4040 Linz, Austria, (jaakko.kultima@ricam.oeaw.ac.at)} ,
Ronny Ramlau\footnote{Johannes Kepler University Linz, Institute of Industrial Mathematics, Altenbergerstra{\ss}e 69, A-4040 Linz, Austria, (ronny.ramlau@jku.at)}\,\,\textsuperscript{,}\footnote{
Johann Radon Institute Linz, Altenbergerstra{\ss}e 69, A-4040 Linz, Austria, (ronny.ramlau@ricam.oeaw.ac.at)}  ,
Tanja Tarvainen\footnote{University of Eastern Finland, Department of Technical Physics, 70211 Kuopio, Finland, (tanja.tarvainen@uef.fi)} ,
}

\begin{document}

\maketitle

\begin{abstract}
In this paper, we consider the efficient numerical minimization of Tikhonov functionals resulting from total-variation (TV) regularization of linear inverse problems. Since the TV penalty is non-smooth, this is typically done either via smooth approximations, which are inexact, or using non-smooth optimization techniques, which can often be numerically expensive, in particular for large-scale problems. Here, we present a numerically efficient minimization approach based on the recently proposed \ssstar Newton method, which employs a novel concept of graphical derivatives and exhibits locally superlinear convergence. The proposed approach is specifically tailored to TV regularization, suitable for large-scale inverse problems, and supported by strong mathematical convergence guarantees. Furthermore, we demonstrate its performance on two (large-scale) tomographic imaging problems and compare our results to those obtained via other state-of-the-art TV regularization approaches. Finally, an open-source Matlab implementation of the proposed method is made available online \footnote{ \url{https://github.com/HGfrerer/TVReg-2D-Semismoothstar-Newton}}.

\smallskip
\noindent \textbf{Keywords.} Inverse and Ill-Posed Problems, Total Variation Regularization, SCD Semismooth* Newton Method, Computerized Tomography, Photoacoustic Tomography

\end{abstract}


\section{Introduction}\label{sect_introduction}

In this paper, we consider (large-scale) linear inverse problems \cite{Engl_Hanke_Neubauer_1996} of the form
    \begin{equation}\label{Ax=b}
        A x = b \,, 
        \qquad
        \text{where}
        \qquad
        A : D(A) \subset X \to Y \,,
    \end{equation}
with $X$ and $Y$ being Banach or Hilbert spaces to be specified below, and their solution using total-variation (TV) regularization \cite{Scherzer_Grasmair_Grossauer_Haltmeier_Lenzen_2008}. In particular, let
    \begin{equation}\label{def_BV}
        \abs{x}_\BV := \sup_{ \phi \in C_0^\infty(\Omega;\R^n), \norm{\phi}_\infty \leq 1 } 
        \int_\Omega x(s) \div \phi(s) \, d s \,. 
    \end{equation}
denote the total variation of $x \in L^1(\Omega)$, $\Omega \subset \R^n$ (see, e.g., \cite{AmFuPa,Me,ChCaCrNoPo}), and assume that
    \begin{equation*}
        \norm{b - b^\delta} \leq \delta \,,
    \end{equation*}
for some noise level $\delta \geq 0$ and noisy data $b^\delta$, which are given instead of the true data $b$. Then classic TV regularization for the inverse problem \eqref{Ax=b} consists in computing \cite{Scherzer_Grasmair_Grossauer_Haltmeier_Lenzen_2008}
    \begin{equation}\label{TV_regularization}
        \xad := \argmin_{x \in D(A)} \mathcal{T}_\alpha^\delta(x) \,,
        \qquad \text{where} \qquad
        \mathcal{T}_\alpha^\delta(x) := \frac{1}{2} \norm{Ax-b^\delta}^2 +\alpha \abs{x}_\BV \,,
    \end{equation}
as an approximation to the minimum-norm solution $\xD$. For $A=I$, $\mathcal{T}_\alpha^\delta$ corresponds to the well-known Rudin-Osher-Fatemi denoising functional \cite{RuOsFa}, which pioneered the use of TV regularization in imaging. Since then, the penalty term $\abs{\cdot}_\BV$ has been used in countless applications \cite{Scherzer_Grasmair_Grossauer_Haltmeier_Lenzen_2008,Mueller_Siltanen_2012}, in particular due to its ability to induce piecewise constant reconstructions favored/required in many imaging problems.

However, while TV regularization is popular in applications, the non-smoothness of the penalty term $\abs{\cdot}_\BV$ also causes several practical difficulties, in particular relating to the minimization of $\mathcal{T}_\alpha^\delta$. A popular approach is to use smooth approximations such~as
    \begin{equation}\label{approx}
        \abs{x}_\BV \approx \int_\Omega \sqrt{ \abs{\nabla x(s)}^2 + \eps  } \, ds \,,
    \end{equation}
where $\eps$ is a small smoothing parameter \cite{RuOsFa,AcVo}, and then to apply standard techniques from (infinite-dimensional) smooth optimization. This is also a common route in finite-dimensional realizations of TV regularization, where in the above approximation the gradient is typically replaced by a suitable difference quotient. While these approximations often yield reasonable reconstructions, they are by their very nature inexact and require a tuning of the smoothing parameter $\eps$. Hence, another common route for the minimization of $\Tad$ is to consider the first-order optimality condition 
    \begin{equation}\label{first_order}
        \partial \, \Tad(x) = A^*(A x - b^\delta) + \alpha \partial \abs{x}_\BV \ni 0 \,,
    \end{equation}
which forms the basis of several minimization algorithms (such as the Chambolle-Pock method \cite{ChPo} and the ADMM Algorithm~\cite{BoydADMM}) reviewed below. While these approaches are free of approximations, their numerical application is typically quite computationally expensive; and prohibitively so for large-scale inverse problems. This is in part due to structural difficulties inherent in the $\abs{\cdot}_\BV$ penalty, but also due to the first-order nature of these approaches, which consequently require a large number of iterations, even when acceleration schemes such as Nesterov acceleration \cite{Nesterov_1983} are used. 

Hence, in this paper, we propose a new and efficient approach for TV regularization based on the recently proposed \ssstar Newton method \cite{Gfr25a,GfrOut21}, applicable also to large-scale inverse problems. The \ssstar Newton method was originally designed to efficiently solve set-valued inclusions of the form $0 \in F(x)$, where $F: \R^N \tto\R^N$ is a set-valued mapping. Using the novel concept of the subspace-containing derivative (SCD), a form of graphical derivative applicable in the non-smooth case, the method is essentially a second-order method for non-smooth problems and was shown to exhibit locally superlinear convergence. In \cite{Gfrerer_Hubmer_Ramlau_2025}, the \ssstar Newton method was used to minimize general variational regularization functionals of the form
    \begin{equation*}
        \Tad \, : \, \R^n \to \R^m \,, \qquad \Tad := S(G(x),b^\delta) + \alpha R(x)  \,,
    \end{equation*}
by essentially applying it to the first-order optimality condition, compare with \eqref{first_order},
    \begin{equation*}
        0 \in \partial \, \Tad = \partial \, S(G(x),b^\delta) + \alpha \partial \, R(x)  \,,
    \end{equation*}
where $\partial$ may here also denote the limiting subdifferential \cite{RoWe98} generalizing the classical subdifferential \cite{Bauschke_Combettes_2017}. Furthermore, \cite{Gfrerer_Hubmer_Ramlau_2025} also discussed the application of the \ssstar Newton method to a specific type of discretized TV regularization functional, namely
    \begin{equation}\label{TV_reg_old}
        \Tad (x) := \frac{1}{2} \norm{Ax-b^\delta}_{\R^m} ^2 +\alpha \sum_{i=1}^{n_2-1} \sum_{j=1}^{n_1-1} \abs{x_{i,j+1} - x_{i,j}} + \abs{x_{i+1,j}-x_{i,j}}\,,
    \end{equation}
where $A$ is an $m\times n$ matrix, $b^\delta \in \R^m$, $\alpha > 0$, and $(x_{i,j})$, $j=1,\dots,n_1$, $i=1,\dots,n_2$, is a matrix representation of the vector $x \in \R^n$. Here, the regularization penalty is a numerical approximation of $\abs{x}_\BV \approx \norm{\nabla x}_{L^1(\Omega)}$ commonly used in applications. More generally, one may consider the finite-dimensional Tikhonov regularization approach
    \begin{equation}\label{TV_reg_discr}
        \xad := \argmin_{x \in \R^n} \Tad(x) \,,
        \qquad \text{where} \qquad
        \Tad(x) := \frac{1}{2} \norm{Ax-b^\delta}_{\R^m}^2 +\alpha \norm{Bx}_1\,,
    \end{equation}
where now $B$ is an $l \times n$ matrix\footnote{The \ssstar Newton method proposed in this paper is not intended for the special case when $B$ is the identity matrix, for which more effective minimization methods are available.}. After discretization, \eqref{TV_regularization} typically leads to just such a finite-dimensional problem, where $B$ is then some type of discrete gradient matrix \cite{Mueller_Siltanen_2012}. Note that both matrices $A$ and $B$ may be very large and that $A$ may only be available via a routine for evaluating the matrix-vector products $Ax$ and $A^T b$. While \cite{Gfrerer_Hubmer_Ramlau_2025} already provided a convergence analysis and numerical experiments demonstrating the numerical efficiency of the \ssstar Newton method applied to \eqref{TV_reg_old}, we subsequently found that several modifications of this approach are possible, which leverage the particular structure of the general problem \eqref{TV_reg_discr}. These lead to a further increase in computational efficiency, making the resulting method applicable also to large-scale inverse problems, i.e., large $m$ and $n$. Overall, this results in a new and efficient \ssstar Newton approach to TV regularization, with both global and local(ly superlinear) convergence guarantees, which are rigorously proven in this paper. Furthermore, we present a number of numerical experiments on two (large-scale) inverse problems from computerized and photoacoustic tomography and compare the results to those obtained via other state-of-the-art TV regularization approaches.
 
The outline of this paper is as follows: In Section~\ref{sect_TV_background}, we review some theoretical background on TV regularization, as well as corresponding reconstruction algorithms. In Section~\ref{sect_TV_Reg}, we then present our modified \ssstar Newton approach to TV regularization, and provide a detailed convergence analysis. In Section~\ref{sect_numerics}, we then apply our approach to two (large-scale) tomographic inverse problems, and conduct extensive numerical experiments, before ending with a short conclusion in Section~\ref{sect_conclusion}.

\section{Background on TV regularization}\label{sect_TV_background}

In this section, we review some background on total variation, TV regularization of linear and nonlinear inverse problems, and corresponding reconstruction algorithms.

\subsection{Background on total variation}

The total variation $\abs{x}_\BV$ of $x \in L^1(\Omega)$, $\Omega \subset \R^n$, was already defined in \eqref{def_BV}. The space
    \begin{equation*}
        \BV := \BV(\Omega) := \Kl{ x \in L^1(\Omega) \, \vert \, \abs{x}_\BV < \infty} \,,
        \qquad
        \text{with}
        \qquad
        \norm{x}_\BV := \abs{v}_{L^1(\Omega)} + \abs{v}_\BV \,,
    \end{equation*}
is a Banach space. The total variation can be seen as the weak* limit \cite{Me} of the Sobolev space $W^{1,1}(\Omega)$, and if $x \in W^{1,1}(\Omega)$, the norms agree. However, unlike $W^{1,1}(\Omega)$, $\BV$ allows for discontinuities, making it highly attractive as a space of ``cartoon'' (or clean) images, as shown in the seminal Rudin-Osher-Fatemi image decomposition model \cite{RuOsFa}.  

Note that since $\phi \in \R^n$, the definition \eqref{def_BV} of $\abs{\cdot}_\BV$ depends on the chosen norm on $\R^n$. The standard Euclidean norm leads to the so-called isotropic $\BV$-functional, which represents the weak* limit of
the norm  
    \begin{equation*}
        \int_\Omega \sqrt{ \sum_{k=1}^n \kl{\frac{\partial}{\partial s_k}x(s) }^2  } \, ds \,.
    \end{equation*}
From an algorithmic point of view, it is often more convenient to use the anisotropic version of BV, which instead of the $\ell_2$-norm uses the $\ell_\infty$-norm on $\R^n$, leading to
    \begin{equation*}
        \int_\Omega \sum_{k=1}^n \abs{\frac{\partial}{\partial s_k}x(s) }  \, ds \,.
    \end{equation*} 
While both functionals define equivalent norms on $\BV$, the limit of a regularization procedure may differ depending on the particular choice among these two options. 

One interesting aspect of the total variation is that it can be computed level-set-wise, which is a consequence of the Coarea formula \cite[Thm.~340]{AmFuPa}: For $x \in BV$, 
    \begin{equation*}
        \abs{x}_\BV = \int_{-\infty}^\infty \abs{\chi_{\Kl{x > t}}}_\BV \, dt \,,
    \end{equation*}
where $\chi$ denotes the indicator function. This connection is useful, e.g., in $L^1(\Omega)$-$\BV$ denoising problems, which decompose level-set-wise \cite{ChEs}, or for the level-set-wise computation of the $\BV$-proximal operator \cite{ChCaCrNoPo}. Another useful analytic property of the total variation is the following approximation property (e.g., \cite[Thm. 3.9]{AmFuPa}): for any $x \in \BV$, there exists a sequence $x_n \in C^\infty(\Omega)$ with 
    \begin{equation*}
        \norm{x_n - x}_{L^1(\Omega)} \to 0 \,,\qquad \text{and} \qquad \abs{x_n}_\BV \to \abs{x}_\BV \,,
    \end{equation*}
and for any such approximation $x_n$, we have $\abs{x}_\BV \leq \liminf_n \abs{x_n}_\BV$. Moreover, the total variation is $L^1(\Omega)$ (or even $L^1_{\mathrm{loc}}(\Omega)$)-lower semicontinuous  \cite[Prop.~3.6]{AmFuPa}, i.e.,
    \begin{equation*}
        x_n \to_{L^1(\Omega)} x \quad\Longrightarrow\quad   \abs{x}_\BV \leq \liminf_{n \to \infty} \abs{x_n}_\BV \,.
    \end{equation*}

A weak topology on $\BV$ is introduced via weak* convergence: a sequence $x_n \in \BV$ converges weak* to $x$, if $x_n \to x$ in $L^1(\Omega)$ and 
    \begin{equation*}
         \int_\Omega  x_n(s) \div \phi(s) \, d s \to \int_\Omega  x(s) \div \phi(s) \, d s \,, 
         \qquad \forall \phi \in C_0^\infty(\Omega;\R^n) \,.
    \end{equation*}
Equivalently \cite[Prop.~3.13]{AmFuPa}, weak* convergence is characterized by $x_n \to x$ in $L^1(\Omega)$ and $\sup_n |x_n|_\BV <\infty$. Also, note that any sequence with $\sup_n \abs{x_n}_\BV < \infty$ has a weak* convergent subsequence \cite[Thm.~3.23]{AmFuPa}. 

The subgradient of the total variation commonly appears in the Euler-Lagrange equations for TV-regularization and, among other things, plays a role in stating source conditions for obtaining convergence rates \cite{BuOs}, as well as in \cite{IgMeSc,ReSc,Scherzer_Grasmair_Grossauer_Haltmeier_Lenzen_2008,HoKaPoSc}. For $\Omega \subset \R^2$, consider the functional $x \mapsto \abs{x}_\BV$ on $L^2(\Omega)$, extended by $+\infty$ outside $\BV$. Then, an element $x^* \in \partial \abs{x}_\BV$ can informally be characterized via the identity \cite[Lemma 5]{Me} 
    \begin{equation*}
        x^*  = -\div \kl{ \frac{\nabla x}{\abs{\nabla x}} } \,,
    \end{equation*}
with substantial technical details at points where $\nabla x = 0$; \cite[Lem.~1]{AlCaCh}, \cite{AnBaCaMa}, \cite[Prop.~8]{BrHo2}. 

The following analytical properties of the $\BV$ space are often useful: sets with bounded $\BV$-norm are sequentially compact in $L^1(\Omega)$ \cite[Thm.~3.23]{AmFuPa}. Moreover, we have a Poincar\'e inequality: for bounded regions $\Omega \subset \R^n$, there holds
    \begin{equation*}
        \abs{x}_{L^p(\Omega)} \leq C \abs{x}_\BV  \,,\qquad \text{for all } x \text{ with } \int_\Omega x(s) \, ds = 0 \,, 
        \qquad 1 \leq p \leq \frac{n}{n-1} \,.
    \end{equation*}
Moreover, the embedding $\BV(\Omega) \hookrightarrow L^p(\Omega)$ is compact for $1 \leq p < \frac{n}{n-1}$ \cite[Cor.~3.49]{AmFuPa}. 

Regarding geometric aspects of total variation, the $\BV$-norm unfortunately does not have favorable properties; for example, the extremal points of the $\BV$ unit ball consist of indicator functions of so-called simple sets; see \cite[Prop.~3.1]{CrIgWa} and \cite[Thm.~4.7]{BrCa}. 

\subsection{Background on TV regularization} 

As noted in the introduction, the total variation $\abs{\cdot}_\BV$ is commonly used as a penalty functional in Tikhonov regularization; see \eqref{TV_regularization}. In the simplest case of $A = I$ with $X=Y=\LtO$, the Tikhonov functional \eqref{TV_regularization} corresponds to the well-known Rudin-Osher-Fatemi denoising functional \cite{RuOsFa}. For general operators $A$, the following assumptions are made to prove existence of a minimizer: (i) $A: L^2(\Omega) \to L^2(\Omega)$ is continuous, and (ii) $A 1 = 1$. According to \cite{ChLi}, a minimizer exists. Condition (ii) is only required due to the fact that constant functions have vanishing $\BV$-norm; it can be removed if, e.g., the norm $\norm{x}_{L^1(\Omega)} + \abs{x}_\BV$ or variants are used as penalty terms instead of $\abs{x}_\BV$. The study of convergence and convergence rates is more involved than in the standard case, and it is useful to consider convergence rates with respect to the Bregman distance 
    \begin{equation*}
        d_p(x,z) := \abs{x}_\BV - \abs{z}_\BV - \spr{ p, x - z } \,,
        \qquad p \in \partial \abs{z}_\BV \,.
    \end{equation*}
Convergence rates for \eqref{TV_regularization} are typically proven under the source condition~\cite{BuOs}
    \begin{equation*}
        \exists \, w \in Y \, : \, A^* w \in \partial \abs{\xD}_\BV \,,
    \end{equation*}
where $\xD$ is the exact solution; see also \cite{Re,Ki} for rates in the reversed Bregman distance. These results have been generalized \cite{HoKaPoSc} to the case of nonlinear ill-posed problems given by an operator equation $F(x) = b$, for which general Tikhonov regularization then reads 
    \begin{equation*}
        \xad := \argmin_{x \in D(F)} \mathcal{T}_\alpha^\delta(x) \,,
        \qquad \text{where} \qquad
        \mathcal{T}_\alpha^\delta(x) :=
        \norm{F(x) - b^\delta}_Y^p + \alpha R(x) \,,
    \end{equation*}
where $R(x)$ is a general convex penalty, e.g., $R(x) = \abs{x}_\BV$. In \cite{HoKaPoSc}, convergence rates in the Bregman distance were established under appropriate assumptions, notably variational inequalities instead of source conditions. This initiated further work on Tikhonov regularization in Banach spaces \cite{ReSc,Scherzer_Grasmair_Grossauer_Haltmeier_Lenzen_2008,T1,T2,T3,T4,T5,T6,T7,T8,T9,Kindermann_Hubmer_2025}, with $\BV$ regularization being a special case. Finally, note that BV penalization can be generalized in various directions, such as to nonlocal $\BV$ \cite{KiOsJo,GiOs} or generalized total variation \cite{BrHo}.

\subsection{Reconstruction algorithms for TV regularization}

In this section, we provide an overview of some of the most important approaches for minimizing the Tikhonov functional \eqref{TV_regularization} with a total variation penalty term. As noted above, especially in earlier approaches, smooth approximations of $\abs{\cdot}_\BV$ such as \eqref{approx} are commonly used to transform \eqref{TV_regularization} into a smooth optimization problem, to which gradient descent methods or iterations based on the Euler–Lagrange equations are then applied \cite{RuOsFa,AcVo}. One example, based on the Barzilei-Borwein method, is the iteration
    \begin{equation}\label{method_BB}
        x_{k+1} = x_k - \tau_k \kl{ A^* (A x_k - b^\delta) - \alpha \nabla \cdot \kl{ \frac{\nabla x_k}{(\abs{\nabla x_k}^2 + \eps)^{1/2}}} } \,,
    \end{equation}
where $\eps > 0$ is a smoothing parameter, compare with \eqref{approx}, and
    \begin{equation*}
        \tau_k := \frac{\spr{ \Delta x, \Delta x }}{\spr{ \Delta x, \Delta \Tad(x) } } \,,
        \quad
        \Delta x := x_k - x_{k-1}\,,
        \quad
        \Delta T_\alpha^\delta(x) := \nabla \Tad (x_k) -\nabla \Tad (x_{k-1}) \,,
    \end{equation*}
is the Barzilai-Borwein stepsize \cite{Barzilai_Borwein_1988}. The method \eqref{method_BB} is used as the representative of these type of approximate $\abs{\cdot}_\BV$ approaches in the numerical examples considered below. 

On the other hand, when not considering smooth approximations of $\abs{\cdot}_\BV$, one may instead start from the Euler-Lagrange equation \eqref{first_order} for \eqref{TV_regularization}, which can be written as
    \begin{equation*}
        -\tau \kl{A^*(A x - b^\delta) + \alpha \partial \abs{x}_\BV} + x \ni x \,,
    \end{equation*}
where $\tau$ is a stepsize parameter. Moving the subgradient $\alpha \partial \abs{x}_\BV$ to the other side of the inclusion, and applying fixed-point iteration, yields the proximal gradient method 
    \begin{equation}\label{proxgrad}
        x_{k+1} = (I + \alpha \tau \partial \abs{\cdot}_\BV)^{-1} \kl{ x_k - \tau A^*(A x_k - b^\delta)  } \,.
    \end{equation}
Here, the mapping $(I + \alpha \tau \partial \abs{\cdot}_\BV)^{-1}$ is the proximal mapping $\prox_{\alpha \tau \abs{\cdot}_\BV}$ of the scaled total variation functional $\alpha \tau \abs{\cdot}_\BV$, which can equivalently be defined as 
    \begin{equation*}
        \prox_{\alpha \tau \abs{\cdot}_\BV}(x)
        := 
        \arg\min_{z} J_{\alpha \tau}(z) \,,
        \qquad
        \text{where}
        \qquad
        J_\lambda(z) := \frac{1}{2} \norm{z-x}_X^2 + \lambda \abs{z}_\BV\,.
    \end{equation*}
Although it is well-known that $\prox_{\alpha \tau \abs{\cdot}_\BV}(x)$ is single-valued \cite{Be17}, its computation is non-trivial. Note that the proximal functional $J(x)$ coincides with the Tikhonov functional \eqref{TV_regularization} for the denoising problem, i.e., $A = I$. The functional $J(x)$ has a dual of the form
    \begin{equation}\label{eq:dual}
        J^*(w) = \frac{1}{2} \norm{w - \frac{g}{\lambda}}_{L^2(\Omega)}^2 + K^*(w) \,,
    \end{equation}
where $K^*$ is the characteristic function of the set $\Kl{ \div \xi \mid |\xi| \leq 1 }$. The minimizer of $J^*(w)$ is given by $w = g - P_{\lambda K} g$, where $P_{\lambda K}$ denotes the projector onto the set $\lambda K$, which can be obtained by solving
    \begin{equation}\label{eq:dual2}
        \min_p \norm{ \lambda \op{div} p - q}^2 \,, \qquad \text{subject to } \abs{p} \leq 1 \,.
    \end{equation}
This problem can be solved by a semi-implicit gradient descent method without any operator inversion, involving only simple $\ell_1$-type projections, leading to the so-called dual method of Chambolle \cite{Ch}. 
A competitive improvement of this method, proposed by Beck and Teboulle \cite{BeTe}, leads to the well-known FISTA algorithm, which employs Nesterov acceleration of the gradient descent and requires only a simple additional extrapolation step. Finally, regarding the Tikhonov functional \eqref{TV_regularization}, FISTA can be used in an inner iteration for the proximal map together with the outer iteration \eqref{proxgrad} \cite{BeTe2}.

The various proximal gradient-type methods were subsequently improved in \cite{ChPo} by employing a primal–dual algorithm, now referred to as the Chambolle–Pock method. For the TV regularization problem \eqref{TV_regularization}, the method (with parameters $\sigma,\tau$) reads
    \begin{equation}\label{Chambolle_Pock}
    \begin{split}
        p_{n+1} &= \kl{I + \sigma \partial 
       \delta_{\B_{\infty}}}^{-1}\kl{ p_n + \sigma \nabla \bar{x}_n } \,, 
        \\
        x_{n+1} &= (I + \tau A^*A)^{-1} \kl{ x_n + \tau A^* b^\delta- \tau \op{div} p_{n+1} } \,, 
        \\
        \bar{x}_{n+1} &= x_{n+1} + \theta (x_{n+1} - x_n) \,,
    \end{split}
    \end{equation}
where $\delta_{\B_{\infty}}$ denotes the indicator function of the unit ball of 
vector-valued 
$L^\infty(\Omega)$-functions. In the first line, the proximal map is essentially identical to the iteration step in Chambolle's dual method, while the second line is a primal step using the forward operator. The last line corresponds to an extrapolation step.

A common alternative to the Chambolle–Pock method is the alternating direction method of multipliers (ADMM) method \cite{BoydADMM}, which is defined by
    \begin{equation*}
    \begin{split}
        x_{n+1} &= (A^*A + \Delta)^{-1} \kl{A^*b^\delta + \tau \op{div}(z_n - u_n)} \,, 
        \\
        z_{n+1} &= \kl{I + \frac{\lambda}{\tau} \partial \norm{\cdot}_{L^1}}^{-1} (\nabla x_{n+1} + u_n) \,, 
        \\
        u_{n+1} &= u_n + \nabla x_{n+1} - z_{n+1} \,.
    \end{split} 
    \end{equation*}
Note that it was later found that an essentially equivalent form of the ADMM method was proposed by Goldstein and Osher under the name of Split-Bregman iteration \cite{GoOs}. A slight difference in that method is that Gauss-Seidel iterations are used in the first line of the iteration, but otherwise the structure of the method is equivalent. 

Concerning second-order methods, a Newton iteration for the primal-dual optimality equations has been proposed by Chan et.al. in \cite{ChanGolub}. A more advanced approach is the semismooth Newton method proposed by Hintermüller and Kunisch \cite{HiKu}. This method is not directly applicable to \eqref{TV_regularization} when $A^*A$ is not invertible, and thus an additional regularization penalty $\gamma \norm{x}_{L^2(\Omega)}^2$ is added. In this case, there is a dual functional:
    \begin{equation}\label{eq:dual3}
        \min_p \norm{(A^*A + \alpha I)^{-\frac{1}{2}} (\lambda \op{div} p + A^* b^\delta)}^2 \,, \qquad \text{subject to } |p| \leq 1 \,.
    \end{equation}
from whose minimizer $p$ the solution $x$ can be recovered via $\op{div} p = (A^*A +\alpha I)^{\frac{1}{2}} x - A^*y$. Formally, and on a discretized level, one can apply a semismooth Newton method to the dual functional as follows: First, one considers Lagrange multipliers for the constraints on $p$, which yield the optimality conditions 
    \begin{equation*}
    \begin{split}
        &\lambda \nabla (A^*A +\alpha I)^{-1}(\lambda \op{div} p + A^* b^\delta) +\mu = 0 \,,
        \\ 
        &\mu = \max\Kl{0,\mu + c(p -1)} +  \min\Kl{0,\mu + c(p +1)} \,,
    \end{split} 
    \end{equation*}
where $c>0$ is a fixed parameter/stepsize. 
Note that in \cite{HiKu}, an additional 
regularization term is added to the 
dual problem. 
A semi-smooth Newton method for this problem then consists of an active set strategy: at iteration $k$, let $\mathcal{A}_{k+1}^+$ and $\mathcal{A}_{k+1}^-$ denote the sets of indices where $\mu_k + c(p_k -1) >0$ and $\mu_k + c(p_k +1) < 0$, respectively, and let $\mathcal{I}_{k+1}$ denote the complementary index set. Then, $p_{k+1}$ and $\mu_{k+1}$ are defined by 
    \begin{equation*}  
    \begin{split}
        &\mu_{k+1} = 0 \,, \qquad \text{for} \quad i \in \mathcal{I}_{k+1} \,,
        \\ 
        &(p_{k+1})_i  = 1\,, \qquad i \in \mathcal{A}_{k+1}^+ \,, \quad (p_{k+1})_i  = -1\,, \quad  i \in \mathcal{A}_{k+1}^-
        \\ 
         &\lambda \nabla (A^*A +\alpha I)^{-1}\kl{\lambda\op{div} p_{k+1} + A^* b^\delta} + \mu_{k+1} = 0 \,.
    \end{split}
    \end{equation*} 
Note that the last equation can be solved by splitting the index sets into $\mathcal{I}_{k+1}$ and its complement, such that then only a linear system of size $\abs{\mathcal{I}_{k+1}} \times \abs{\mathcal{I}_{k+1}}$ has to be solved.

\section{\texorpdfstring{A \ssstar\ Newton approach for the efficient solution of $\ell_{1}$-regularized least-squares problem}{A semismooth* Newton approach for the efficient solution of l1-regularized least-squares problem}
}\label{sect_TV_Reg}

As noted above, after discretization, the TV regularization problem \eqref{TV_regularization} typically turns into a finite-dimensional optimization problem of the form \eqref{TV_reg_discr}. Hence, in this section, we consider the efficient solution of $\ell_1$-regularized least-squares problems of the form
    \begin{equation}\label{EqLSProbl}
        \min_x \varphi(x) := \frac 12 \norm{Ax-b^\delta}^2 +\alpha \norm{Bx}_1 \,,
    \end{equation}
where $A$ and $B$ are $m \times n$ and $l \times n$ matrices, respectively, $b^\delta \in \R^m$, and $\alpha > 0$. Both matrices $A$ and $B$ may be very large, and possibly $A$ is not known explicitly but only via a routine for evaluating the matrix-vector products $Ax$ and $A^T b$. The proposed method, which is based on the \ssstar Newton method, is not intended for the special case when $B$ is the identity matrix, where more effective methods are known. 

\subsection{Existence of solutions/minimizers}

First, we show that our minimization problem \eqref{EqLSProbl} has a non-empty solution set.

\begin{proposition}\label{PropExSolution}
The solution set $S_{\rm Opt}:=\argmin \varphi(x)$ is not empty.
\end{proposition}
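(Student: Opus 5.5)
The function $\varphi$ is convex, continuous and bounded below by $0$, but it need not be coercive: both $A$ and $B$ may have nontrivial kernels. So the direct method does not apply immediately. Instead we reduce to a coercive problem on a complementary subspace, or equivalently view $\varphi$ as a convex piecewise quadratic function.

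\emph{Approach 1 (reduction to a coercive problem).} Let $N := \ker A \cap \ker B$. For every $x$ and every $z \in N$ we have $\varphi(x+z)=\varphi(x)$. Hence it suffices to minimize $\varphi$ over $N^\perp$.

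On $N^\perp$ the map $x\mapsto (Ax,Bx)$ is injective. Therefore there is $c>0$ with $\norm{Ax}+\norm{Bx}_1 \geq c\norm{x}$ for all $x\in N^\perp$, by compactness of the unit sphere in $N^\perp$.

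It remains to show that $\varphi$ restricted to $N^\perp$ is coercive. Take $x_k \in N^\perp$ with $\norm{x_k}\to\infty$. Then $\norm{Ax_k}+\norm{Bx_k}_1\to\infty$. If $\norm{Bx_k}_1\to\infty$ along a subsequence, then $\varphi(x_k)\ge \alpha\norm{Bx_k}_1\to\infty$. Otherwise $\norm{Ax_k}\to\infty$, and then $\frac12\norm{Ax_k-b^\delta}^2\to\infty$. Applying this to subsequences shows $\varphi(x_k)\to\infty$.

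Since $\varphi$ is continuous, its sublevel sets in $N^\perp$ are then compact. By Weierstrass, a minimizer over $N^\perp$ exists, and it is a global minimizer of $\varphi$ on $\R^n$.

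\emph{Approach 2 (structural alternative).} Split $\R^n$ into finitely many polyhedral cones according to the sign pattern of $Bx$. On each piece $\varphi$ is a quadratic function that is bounded below, and a quadratic bounded below on a polyhedron attains its infimum by the Frank--Wolfe theorem. The minimum over the finitely many pieces is then the global minimum. I would present Approach 1, since it is elementary.

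The only step needing some care is the coercivity argument, which is handled by the subsequence case split above. I expect no real obstacle.
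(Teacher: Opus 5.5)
Your proposal is correct and is essentially the paper's argument: both factor out $\ker A\cap\ker B$, on which $\varphi$ is invariant, and rely on compactness of the unit sphere in its orthogonal complement. The paper argues sequentially, normalizing a supposedly unbounded minimizing sequence in $(\ker A\cap\ker B)^\perp$ and obtaining a unit limit vector in $\ker A\cap\ker B$, a contradiction; you package the same compactness as the uniform bound $\|Ax\|+\|Bx\|_1\ge c\|x\|$ and conclude via coercivity and Weierstrass.
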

\begin{proof}
Let $L:=\ker A\cap\ker B$. Since $\varphi$ is bounded from below by $0$, there exists a minimizing sequence $x_k$ with $\lim_{k\to\infty}\varphi(x_k)=\inf\varphi$. Since for every $u\in L$ we have $\varphi(x_k+u)=\varphi(x_k)$, we may assume that $x_k\in L^\perp$ for all $k$. We now show by contraposition that the sequence $x_k$ is bounded. For this, assume on the contrary that $x_k$ has an unbounded subsequence, without loss of generality the sequence $x_k$ itself. By possibly passing to a subsequence, we may assume that $u_k:=x_k/\norm{x_k}$ converges to some $u$ and, by taking into account that the sequence $\norm{Ax_k-b^\delta}^2$ is bounded, we obtain that
    \begin{equation*}
        0=\lim_{k\to\infty}\frac{\varphi(x_k)}{\norm{x_k}}=\lim_{k\to\infty}\Big(\frac{\norm{Ax_k-b^\delta}^2}{2\norm{x_k}}+\alpha\norm{Bu_k}_1\Big)=\alpha\norm{Bu}_1 \,,
    \end{equation*}
and
    \begin{equation*}
        0=\lim_{k\to\infty}\frac{\varphi(x_k)}{\norm{x_k}^2}=\lim_{k\to\infty}\Big(\frac 12\norm{Au_k-\frac{b^\delta}{{\norm{x_k}}}}^2+\alpha\frac{\norm{Bu_k}_1}{\norm{x_k}}\Big)=\frac 12\norm{Au}^2 \,.
    \end{equation*}
Hence $u\in L$, $\norm{u}=1$, and for all $k$ sufficiently large we have $\frac 12 <\skalp{u,u_k}=\skalp{u,x_k}/\norm{x_k}$ contradicting $x_k\in L^\perp$. Hence, $x_k$ is bounded and, after possibly passing to a subsequence, we may assume that $x_k$ converges to some $\xb$. By continuity of $\varphi$ we readily obtain $\varphi(\xb)=\inf\varphi$, proving the assertion.
\end{proof}

Using the above result, we conclude from \cite[Theorem 23.9]{Ro70} that at every solution $x\in S_{\rm Opt}$ the following first-order optimality condition is fulfilled: 
    \begin{equation}\label{EqFO_Probl}
        0\in\partial\varphi(x)=A^T(Ax-b^\delta)+ B^T\partial\alpha\norm{Bx}_1 \,.
    \end{equation}

\subsection{On an augmented Lagrangian method}

Next, note that by substituting $z:=Bx$, problem \eqref{EqLSProbl} can be equivalently written as
    \begin{equation}\label{EqExtProblem}
        \min_{x,z} \frac 12 \norm{Ax-b^\delta}^2 +\alpha \norm{z}_1\,,
        \quad \text{subject to} \quad Bx-z=0 \,,
    \end{equation}
and the first-order optimality conditions for this problem read
    \begin{subequations}\label{EqFO_ExtProbl}
    \begin{align}
        \label{EqFO1a}  &A^T(Ax-b^\delta)+ B^Tz^*=0 \,,
        \\
        \label{EqFO1b}  &Bx-z=0 \,, 
        \\
        \label{EqFO1c}  &z^*\in\partial\alpha\norm{z}_1 \,.
    \end{align}
    \end{subequations}
For every solution $(x,z)$ of problem \eqref{EqExtProblem}, there exists some multiplier $z^*$ such that the triple $(x,z,z^*)$ fulfills \eqref{EqFO_ExtProbl}. Conversely, if we are given a triple $(x,z,z^*)$ fulfilling the first-order optimality conditions \eqref{EqFO_ExtProbl}, then $(x,z)$ is a solution of the convex program \eqref{EqExtProblem}. Our goal is now to compute, with some numerical procedure, a triple $(x,z,z^*)$ such that $z^*\in\alpha\partial\norm{z}_1$ and both $\norm{A^T(Ax-b^\delta)+B^Tz^*}$ and $\norm{Bx-z}$ are small. By the following statement, we can then conclude that $(x,z)$ is close to a solution of \eqref{EqExtProblem}.

\begin{proposition}\label{PropErrEstimate}
The set
    \begin{equation*}
        S_{\rm FO}:=\Kl{(x,z,z^*)\mv (x,z,z^*) \,\, \text{fulfills} \,\, \eqref{EqFO_ExtProbl}}
    \end{equation*}
is non-empty, and there exist $0 < \kappa, \bar\eps \in \R$ such that for every $(x,z,z^*)\in\R^n\times\R^l\times\R^l$ satisfying $\norm{A^T(Ax-b^\delta)+ B^Tz^*} +\norm{Bx-z}+\dist{z^*,\partial\alpha\norm{z}_1}<\bar\eps$, there holds
    \begin{equation}\label{EqErrEstimate}
    \begin{split}
        \dist{x,S_{\rm Opt}}&\leq \dist{(x,z,z^*),S_{\rm FO}}
        \\
        &\leq \kappa\big(\norm{A^T(Ax-b^\delta)+ B^Tz^*}+\norm{Bx-z}+\dist{z^*,\partial\alpha\norm{z}_1}\big) \,.
    \end{split}
    \end{equation}
\end{proposition}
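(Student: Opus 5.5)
Nonemptiness of $S_{\rm FO}$ comes first. By Proposition~\ref{PropExSolution} there is some $\bar x\in S_{\rm Opt}$. The optimality condition \eqref{EqFO_Probl} then supplies $z^*\in\partial\alpha\norm{B\bar x}_1$ with $A^T(A\bar x-b^\delta)+B^Tz^*=0$. Setting $\bar z:=B\bar x$, the triple $(\bar x,\bar z,z^*)$ satisfies \eqref{EqFO_ExtProbl}.

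For the first inequality in \eqref{EqErrEstimate}, take any $(x',z',z'^*)\in S_{\rm FO}$. As noted before the statement, $(x',z')$ then solves the convex program \eqref{EqExtProblem}. Since $z'=Bx'$, this means $x'\in S_{\rm Opt}$. Hence
\[\dist{x,S_{\rm Opt}}\le\norm{x-x'}\le\norm{(x,z,z^*)-(x',z',z'^*)},\]
and taking the infimum over $S_{\rm FO}$ gives the claim.

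The main step is the second inequality, which is an error bound. I plan to obtain it from polyhedrality and Robinson's theorem on upper Lipschitz continuity of polyhedral multifunctions. Define the set-valued map
\[M(x,z,z^*):=\bigl(A^T(Ax-b^\delta)+B^Tz^*,\;Bx-z\bigr)\times(z^*-\partial\alpha\norm{z}_1).\]
Equivalently, consider the perturbation map
\[\Psi(p,q,r):=\Kl{(x,z,z^*)\mv A^T(Ax-b^\delta)+B^Tz^*=p,\; Bx-z=q,\; z^*-r\in\partial\alpha\norm{z}_1}.\]
The graph of $\partial\alpha\norm{\cdot}_1$ is a finite union of polyhedra: it is a product over components of the graph of $\alpha\,\partial|\cdot|$, and each such component graph is a union of three polyhedral pieces. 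Because all remaining relations are linear, $\operatorname{gph}\Psi$ is also a finite union of polyhedral convex sets, so $\Psi$ is a polyhedral multifunction. Robinson's theorem then yields $\kappa,\bar\eps>0$ such that
\[\Psi(p,q,r)\subset\Psi(0,0,0)+\kappa\norm{(p,q,r)}\B \quad\text{whenever }\norm{(p,q,r)}<\bar\eps,\]
with $\Psi(0,0,0)=S_{\rm FO}$ by definition.

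Now let $(x,z,z^*)$ be as in the statement. Choose $w\in\partial\alpha\norm{z}_1$ attaining $\dist{z^*,\partial\alpha\norm{z}_1}$; this is possible because the subdifferential is closed, convex and nonempty. Put $r:=z^*-w$, $p:=A^T(Ax-b^\delta)+B^Tz^*$ and $q:=Bx-z$. Then $(x,z,z^*)\in\Psi(p,q,r)$. The norm of $(p,q,r)$ is at most the sum of the three residuals, so it is below $\bar\eps$, and the inclusion gives
\[\dist{(x,z,z^*),S_{\rm FO}}\le\kappa\bigl(\norm{p}+\norm{q}+\norm{r}\bigr),\]
which is exactly \eqref{EqErrEstimate}. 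Robinson's result is typically stated with some fixed norm on the perturbation space, which may differ from $\norm{p}+\norm{q}+\norm{r}$; any such difference is absorbed into $\kappa$ by norm equivalence.

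The delicate point is verifying polyhedrality carefully, so that Robinson's theorem applies in the form needed. That theorem guarantees upper Lipschitz behaviour only locally near the unperturbed parameter $0$, which is precisely why the statement carries the restriction to residuals below $\bar\eps$. Nonemptiness of $\Psi(0,0,0)=S_{\rm FO}$ is needed for the inclusion to be meaningful, and it was established in the first step.
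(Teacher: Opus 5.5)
Your proposal is correct and follows essentially the same route as the paper: nonemptiness from Proposition~\ref{PropExSolution} together with \eqref{EqFO_Probl}, then Robinson's upper Lipschitz theorem for polyhedral multifunctions. Your perturbation map $\Psi$ is exactly the inverse of the paper's map $F$, with the perturbation placed in the multiplier component. You also spell out two details that the paper leaves under ``directly follows'': the first inequality, and the choice of a nearest point in $z^*-\partial\alpha\norm{z}_1$ that converts Robinson's inclusion into the distance estimate.
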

\begin{proof}
Consider $\xb\in S_{\rm Opt}$ and a subgradient $\zba\in\partial \alpha\norm{B\xb}_1$ fulfilling the first-order optimality condition \eqref{EqFO_Probl}, i.e., $0=A^T(A\xb-b^\delta)+ B^T\zba$. Then we obviously have $(\xb,B\xb,\zba)\in S_{\rm FO}\not=\emptyset$. Next, consider the set-valued mapping
    \begin{equation*}
        F:\R^n\times\R^l\times\R^l\tto\R^n\times\R^l\times\R^l \,,
        \qquad 
        F(x,z,z^*):=\myvec{A^T(Ax-b^\delta)+ B^Tz^*\\Bx-z\\z^*-\partial \alpha \norm{z}_1} \,.
    \end{equation*}
Since $\partial\alpha\norm{\cdot}_1$ is a polyhedral multifunction, i.e., its graph is the union of finitely many convex polyhedra, so is $F$. Hence, by \cite{Rob81} there are $0 < \kappa, \bar\eps \in \R$ and such that
    \begin{equation*}
        \dist{(x,z,z^*), F^{-1}(0)}\leq \kappa \,\dist{0, F(x,z,z^*)}\,, \quad \text{whenever} \quad \dist{0, F(x,z,z^*)}<\bar\eps \,,
    \end{equation*}
from which the bound \eqref{EqErrEstimate} directly follows.
\end{proof}

Next, for a given penalty parameter $\sigma\geq0$, consider the augmented Lagrangian
    \begin{equation*}
    \begin{split}
        \lag_\sigma:\R^n\times\R^l\times\R^l &\to\R
        \\
        \lag_\sigma(x,z,\zeta^*) &:= \frac 12 \norm{Ax-b^\delta}^2 +\alpha \norm{z}_1+\skalp{\zeta^*,Bx-z}+\frac\sigma2\norm{Bx-z}^2 \,.
    \end{split}
    \end{equation*}
Now as a numerical method for solving \eqref{EqExtProblem}, we propose the following inexact variant of an \emph{augmented Lagrangian method (ALM)}:

\begin{algorithm}[Algorithm~ALM]\label{AlgALM}\mbox{ }\\
Let $\beta\in(0,1)$, a sequence $\gamma_l > 0$ satisfying $\sum_{l=0}^\infty\gamma_l=\infty$, a penalty parameter $\ee\sigma0>0$, and a starting point $(\ee x0,\ee z0,\ee{\zeta^*}0)\in \R^n\times\R^l\times\R^l$ be given. Set $\ee l0:=0$.
\\ \vspace{-5pt} \\ \noindent 
For $k=0,1,\ldots$, perform the following steps
\begin{itemize}
    \item {\bf Step 1:} Compute
\begin{equation}
  \label{EqApprMinLaq}(\ee x{k+1},\ee z{k+1})\approx\argmin_{x,z}\lag_{\ee\sigma k}(x,z,\ee {\zeta^*}k) \,.
\end{equation}
\item {\bf Step 2:} Set
\begin{equation}
  \label{EqUpdateMult}\ee{\zeta^*}{k+1}:=\ee{\zeta^*}k+\ee\sigma k(B\ee x{k+1}-\ee z{k+1}) \,,
\end{equation}
and
\begin{equation}\label{EqUpdateSigma}
\begin{split}
  &\big(\ee \sigma{k+1},\ee l{k+1}\big)
  \\
  &:=\begin{cases}
  \big((1+\gamma_{\ee lk})\ee\sigma k,\ee lk +1\big) \,, &\mbox{if $\norm{B\ee x{k+1}-\ee z{k+1}}>\beta\norm{B\ee xk-\ee zk}$} \,, \\
  \big(\ee\sigma k,\ee lk\big) \,, &\mbox{otherwise} \,.
\end{cases}
\end{split}
\end{equation}
\end{itemize}
\end{algorithm}

In order to ensure convergence of Algorithm~\ref{AlgALM}, we have to specify the level of accuracy in \eqref{EqApprMinLaq}. Note that for fixed $x$, the minimization of $\lag_\sigma(x,z,z^*)$ wrt.\ $z$ can be easily performed: For given $\tau>0$, consider the {\em Moreau envelope}  of $\tau\norm{\cdot}_1$, defined by
    \begin{equation}\label{EqMoreau}
        \Mor \tau (\zeta):= \min_{z}\frac 12\norm{z-\zeta}^2+\tau\norm{z}_1 \,,
    \end{equation}
and the \emph{proximal mapping}
    \begin{equation}\label{EqProx}
        \Prox \tau(\zeta):=\argmin_{z}\frac 12\norm{z-\zeta}^2+\tau\norm{z}_1 \,,
    \end{equation}
i.e., the $i$-th component is \cite{Be17}
    \begin{equation}\label{EqProxL1}
        (\Prox \tau(\zeta))_i 
        = \begin{cases}
        \zeta_i-\tau \,, &\mbox{if $\zeta_i>\tau$} \,, 
        \\
        0 \,, & \mbox{if $\zeta_i\in[-\tau,\tau]$} \,, 
        \\
        \zeta_i+\tau \,, &\mbox{if $\zeta_i<-\tau$} \,.
        \end{cases}
    \end{equation}
$\Prox\tau$ is also called \emph{soft-thresholding operator}. It is straightforward to verify that
    \begin{align}
        \nonumber\min_z\lag_\sigma(x,z,\zeta^*)&=\frac 12 \norm{Ax-b^\delta}^2-\frac {\norm{\zeta^*}^2}{2\sigma}+\sigma\min_z\Big(\frac 12\norm{z-(Bx+\frac{\zeta^*}\sigma)}^2+\frac \alpha\sigma \norm{z}_1\Big)
        \\
        \label{EqMin_z_Lag}&=\frac 12 \norm{Ax-b^\delta}^2-\frac {\norm{\zeta^*}^2}{2\sigma}+\sigma \Mor{\alpha/\sigma}\Big(Bx+\frac{\zeta^*}\sigma\Big) \,,
    \end{align}
and therefore
    \begin{equation}\label{EqArgMinLag}
        \argmin_z\lag_\sigma(x,z,\zeta^*)=\Prox{\alpha/\sigma}\Big(Bx+\frac {\zeta^*}\sigma\Big) \,.
    \end{equation}
Thus, it is evident to require that
    \begin{equation}\label{Eq_z_k+1}
        \ee z{k+1}=\Prox{\alpha/\ee\sigma k}\Big(B\ee x{k+1}+\frac{\ee{\zeta^*}k}{\ee \sigma k}\Big)\,, \qquad \forall \, k \,,
    \end{equation}
implying by virtue of the first-order optimality conditions that
    \begin{equation*}
        0\in\partial_z\lag_{\ee\sigma k}(\ee x{k+1},\ee z{k+1},\ee{\zeta^*}k)=- \ee {\zeta^*}k-\ee\sigma k(B\ee x{k+1}-\ee z{k+1})+\partial \alpha \norm{\ee z{k+1}}_1 \,,
    \end{equation*}
and consequently, by \eqref{EqUpdateMult}, 
    \begin{equation}\label{EqFO_z}
        \ee {\zeta^*}{k+1}\in \partial \alpha \norm{\ee z{k+1}}_1 \,.
    \end{equation}
With these considerations, we now obtain the following convergence result:

\begin{theorem}
Let $(\ee xk,\ee zk,\ee{\zeta^*}k)$ be produced by Algorithm~\ref{AlgALM}. If \eqref{Eq_z_k+1} holds and
    \begin{equation}\label{EqNabla_x_lag}
    \begin{split}
        &\lim_{k\to\infty}\norm{\nabla_x\lag_\sigma(\ee x{k+1},\ee z{k+1},\ee{\zeta^*}k)}\\
        &=\lim_{k\to\infty}\norm{A^T(A\ee x{k+1}-b^\delta)+B^T\kl{\ee {\zeta^*}k+\sigma\kl{B\ee x{k+1}-\ee z{k+1}}}}=0 \,,
    \end{split}
    \end{equation}
then
    \begin{equation*}
        \lim_{k\to\infty}\dist{\kl{\ee xk,\ee zk,\ee {\zeta^*}k},S_{\rm FO}}=0 \,,
    \end{equation*}
and
    \begin{equation*}
        \lim_{k\to\infty}\dist{\ee xk,S_{\rm Opt}}=0 \,.
    \end{equation*}
In particular, every accumulation point of $\ee xk$ is a solution of problem \eqref{EqLSProbl}.
\end{theorem}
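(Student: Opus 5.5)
The plan is to reduce everything to the error bound of Proposition~\ref{PropErrEstimate}. I will show that the three residuals
\[
\norm{A^T(A\ee xk-b^\delta)+B^T\ee{\zeta^*}k},\qquad \norm{B\ee xk-\ee zk},\qquad \dist{\ee{\zeta^*}k,\partial\alpha\norm{\ee zk}_1}
\]
tend to zero. Once all three are below $\bar\eps$, \eqref{EqErrEstimate} gives both $\dist{(\ee xk,\ee zk,\ee{\zeta^*}k),S_{\rm FO}}\to0$ and $\dist{\ee xk,S_{\rm Opt}}\to0$. The claim about accumulation points then follows because $S_{\rm Opt}$ is closed: it is the solution set of a convex continuous problem.

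The third residual is identically zero for $k\ge1$. Indeed, \eqref{Eq_z_k+1} together with the multiplier update \eqref{EqUpdateMult} yields \eqref{EqFO_z}, that is, $\ee{\zeta^*}{k+1}\in\partial\alpha\norm{\ee z{k+1}}_1$.

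For the first residual, \eqref{EqUpdateMult} gives
\[
\ee{\zeta^*}k+\ee\sigma k(B\ee x{k+1}-\ee z{k+1})=\ee{\zeta^*}{k+1},
\]
where I read the $\sigma$ in \eqref{EqNabla_x_lag} as $\ee\sigma k$. Hence $\nabla_x\lag$ evaluated at the iterate equals $A^T(A\ee x{k+1}-b^\delta)+B^T\ee{\zeta^*}{k+1}$, and hypothesis \eqref{EqNabla_x_lag} is exactly the statement that the first residual at index $k+1$ tends to zero.

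The main obstacle is the second residual, the feasibility gap $r_k:=\norm{B\ee xk-\ee zk}$. I split into two cases according to the rule \eqref{EqUpdateSigma}.

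\textbf{Case 1: the penalty is increased only finitely often.} Then from some index on we always have $r_{k+1}\le\beta r_k$, so $r_k\to0$ geometrically.

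\textbf{Case 2: the penalty is increased infinitely often.} Since $\sum\gamma_l=\infty$, we get $\ee\sigma k\to\infty$, because $\prod(1+\gamma_l)\ge 1+\sum\gamma_l$. Here I would argue that $\ee{\zeta^*}k$ is bounded. Each $\ee{\zeta^*}{k}$ with $k\ge1$ is a subgradient of $\alpha\norm{\cdot}_1$, so $\norm{\ee{\zeta^*}k}_\infty\le\alpha$. The feasibility gap is then
\[
r_{k+1}=\frac{\norm{\ee{\zeta^*}{k+1}-\ee{\zeta^*}k}}{\ee\sigma k}\le\frac{2\alpha\sqrt l}{\ee\sigma k}\to0 .
\]
This bound on $r_{k+1}$ holds for every $k$, regardless of whether an increase occurred at step $k$, since it uses only the monotone growth of $\ee\sigma k$. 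So $r_k\to0$ in this case as well.

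This boundedness of the multipliers, coming for free from \eqref{EqFO_z}, is the key point. It avoids any monotonicity or Fejér-type analysis of the ALM. I would double-check the indexing, namely that \eqref{EqFO_z} holds from $k=1$ onward, and that in Case 2 the sequence $\ee\sigma k$ is nondecreasing and unbounded. With all three residuals tending to zero, Proposition~\ref{PropErrEstimate} applies for large $k$ and finishes the proof.
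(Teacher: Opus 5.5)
Your proposal is correct and follows the paper's proof essentially step for step. You bound the multipliers via \eqref{EqFO_z}, get a vanishing feasibility gap from the same two-case split on the penalty update \eqref{EqUpdateSigma}, use \eqref{EqUpdateMult} to turn \eqref{EqNabla_x_lag} into the stationarity residual, and conclude with Proposition~\ref{PropErrEstimate}; your remarks on the closedness of $S_{\rm Opt}$ and on the bound $\norm{B\ee x{k+1}-\ee z{k+1}}\le 2\alpha\sqrt{l}/\ee\sigma k$ holding for all $k\ge1$ just make explicit details the paper leaves implicit.
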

\begin{proof}
Since the subdifferential of the $\ell_1$-norm is contained in the unit ball with respect to the $\ell_\infty$-norm, we conclude from \eqref{EqFO_z} that $\Vert \ee {\zeta^*}{k+1} \Vert_\infty\leq \alpha$ for all $k$. If $\ee \sigma{k+1}$ is only finitely many times increased by the update scheme \eqref{EqUpdateSigma}, then we have 
    \begin{equation*}
        \norm{B\ee x{k+1}-\ee z{k+1}}\leq\beta\norm{B\ee xk-\ee zk}
        \qquad \text{for all $k$ sufficiently large} \,,
    \end{equation*}
implying $\lim_{k\to\infty}\norm{B\ee xk-\ee zk}=0$. On the other hand, if $\ee \sigma{k+1}$ is increased infinitely many times, we conclude from $\sum_{k=0}^\infty\gamma_l=\infty$ that $\lim_{k\to\infty}\ee \sigma k=\infty$, and therefore,
    \begin{equation*}
        \lim_{k\to\infty}\norm{B\ee x{k+1}-\ee z{k+1}}=\lim_{k\to\infty}\frac{\norm{\ee {\zeta^*}{k+1}-\ee{\zeta^*}k}}{\ee \sigma k}=0 \,,
    \end{equation*}
where we have used \eqref{EqUpdateSigma} and the boundedness of $\ee {\zeta^*} k$. Hence, together with \eqref{EqUpdateMult} and \eqref{EqFO_z}, we obtain from \eqref{EqNabla_x_lag} that
    \begin{align*}
        &\lim_{k\to\infty}\norm{A^T(A\ee x{k+1}-b^\delta)+B^T\ee {\zeta^*}{k+1}}+\norm{B\ee x{k+1}-\ee z{k+1}}+\dist{\ee {\zeta^*}{k+1},\alpha \partial\norm{\ee z{k+1}}_1}
        \\
        &= \lim_{k\to\infty}\norm{A^T(A\ee x{k+1}-b^\delta)+B^T\big(\ee {\zeta^*}k+\sigma(B\ee x{k+1}-\ee z{k+1})\big)}+\norm{B\ee x{k+1}-\ee z{k+1}}=0 \,,
    \end{align*}
and thus the assertion now follows from Proposition~\ref{PropErrEstimate}.
\end{proof}

\subsection{On the efficient solution of the subproblems}

In this section, we consider a regularized \ssstar Newton method for the approximate solution of \eqref{EqApprMinLaq} in Algorithm~\ref{AlgALM}. Omitting, for easier readability, the iteration index $k$, we want to approximately minimize the function $\psi:\R^n\times\R^l\to\R$ given by
    \begin{equation*}
        \psi(x,z):=\frac 12 \norm{Ax-b^\delta}^2+\alpha\norm{z}_1+\skalp{\zeta^*,Bx-z}+\frac \sigma 2\norm{Bx-z}^2= \lag_\sigma(x,z,\zeta^*) \,,
    \end{equation*}
where $\zeta^*\in\R^l$ and $\alpha,\sigma>0$ are fixed parameters. By convexity of $\psi$, this is equivalent to solving the inclusion
    \begin{equation}\label{EqInclPsi}
        0\in\partial\psi(x,z) \,.
    \end{equation}
Now due to \eqref{EqArgMinLag} and \eqref{EqProxL1}, we have
    \begin{equation}\label{EqPsi}
        \Psi(x):=\argmin_z\psi(x,z)=\Prox{\alpha/\sigma}\kl{Bx+\frac{\zeta^*}\sigma} \,,
    \end{equation}
with
    \begin{equation}\label{EqPsi_i}
        \Psi_i(x)=
        \begin{cases}
        (Bx)_i+\frac{\zeta^*_i}\sigma-\frac\alpha\sigma \,, & \mbox{if $(Bx)_i+\frac{\zeta^*_i}\sigma>\frac\alpha\sigma$} \,,
        \\
        (Bx)_i+\frac{\zeta^*_i}\sigma+\frac\alpha\sigma \,, & \mbox{if $(Bx)_i+\frac{\zeta^*_i}\sigma<-\frac\alpha\sigma$} \,,
        \\
        0 \,, &\mbox{otherwise} \,.
        \end{cases}
    \end{equation}
In view of \eqref{EqNabla_x_lag}, we want to find $\tilde x\in\R^n$ such that
    \begin{equation}\label{EqSubProbl}
        \norm{\nabla_x \psi(\tilde x,\Psi(\tilde x))}\leq \eps
    \end{equation}
for some prescribed tolerance $\eps>0$. Note that the choice $z=\Psi(x)$ ensures that
    \begin{equation*}
        0\in\partial_z\psi(x,\Psi(x)) \,.
    \end{equation*}

Now it is well-known (see, e.g., \cite[Proposition 13.37]{RoWe98}) that the Moreau envelope $\Mor\tau(\zeta)$ defined in \eqref{EqMoreau} is continuously differentiable with Lipschitzian gradient
    \begin{equation*}
        \nabla\Mor\tau(\zeta)=\zeta-\Prox \tau(\zeta) \,.
    \end{equation*}
In view of \eqref{EqMin_z_Lag}, it thus follows that the function $\vartheta:\R^n\to\R$ defined by
    \begin{equation}\label{EqTheta}
        \vartheta(x):=\min_{z}\psi(x,z)=\psi\kl{(x,\Psi(x)}
    \end{equation}
is continuously differentiable with Lipschitzian gradient
    \begin{equation}\label{EqGradTheta}
        \nabla\vartheta(x)= A^T(Ax-b^\delta)+B^T\big(\zeta^*+\sigma(Bx-\Psi(x))\big)=\nabla_x\psi(x,\Psi(x)) \,.
    \end{equation}
Since the Moreau envelope of a convex function is again a convex function, we infer from \eqref{EqMin_z_Lag} that $\vartheta$ is convex. It follows that $(\xb,\zb)$ minimizes $\psi$ if and only if $\zb=\Psi(\xb)$ and $\xb$ minimizes $\vartheta$.

For finding approximate zeros of $\nabla\vartheta$, semismooth Newton methods are well established, cf. \cite{QiSun93}. However, these methods usually do not take into account the specific structure of the underlying problem and for this reason, we now use a globalized version of the SCD \ssstar Newton method introduced in \cite{GfrOut22}, which offers more flexibility. Let us briefly recall this method, which aims to solve inclusions of the form
    \begin{equation}\label{EqIncl}
        0\in F(u) \,,
    \end{equation}
where $F:\R^n\tto\R^n$ is a set-valued mapping. For this, consider the metric space $\Z_n$ of all $n$-dimensional subspaces of $\R^n\times \R^n$ equipped with the metric
    \begin{equation*}
        d_\Z(L_1,L_2)=\norm{P_{L_1}-P_{L_2}} \,,
    \end{equation*}
where $P_{L_i}$, denotes the orthogonal projector onto $L_i$. Given a subspace $L\in  \Z_n$, let
    \begin{equation*}
        L^*:=\{(v^*,u^*)\in\R^n\times\R^n\mv (u^*,-v^*)\in L^\perp\} \,.
    \end{equation*}
denote its \emph{adjoint} subspace. Then $(L^*)^*=L$ and $d_\Z(L_1,L_2)=d_\Z(L_1^*,L_2^*)$, cf.~\cite{GfrOut22}. Since $\dim L^*=\dim L^\perp = n+n-\dim L=n$, we have $L^*\in\Z_{n}$ whenever $L\in\Z_{n}$.

\begin{definition}[{\cite[Definition 3.3]{GfrOut22}}]\label{DefSCD}
Let $F:\R^n\tto\R^n$ be a mapping.
    \begin{enumerate}
        \item $F$ is called \emph{graphically smooth} at $(u,v)\in\gph F$ and of dimension $d$ in this respect, if $T_{\gph F}(u,v)$ is a $d$-dimensional subspace of $\R^n\times\R^n$. We denote by $\OO_F$ the set of all points from the graph of $F$, where $F$ is graphically smooth of dimension $n$.
        \item The \emph{subspace containing derivative} (SCD), $\Sp F:\gph F\tto  \Z_n$, is defined by
            \begin{equation*}
                \Sp F(u,v):=\Kl{L\in \Z_{n}\mv \exists (u_k,v_k)\longsetto {\OO_F}(u,v): d_\Z(T_{\gph F}(u_k,v_k),L)=0} \,,
            \end{equation*}
        and the \emph{adjoint} SCD, $\Sp^*F:\gph F\tto \Z_n$, is defined by
            \begin{equation*}
                \Sp^*F(u,v):=\Kl{L^*\mv L\in\Sp F(u,v)} \,.
            \end{equation*}
        \item We say that $F$ has the \emph{SCD property} at $(\ub,\vb)\in\gph F$, if $\Sp F(\ub,\vb)\not=\emptyset$, and we say that $F$ is an  SCD mapping if it has the SCD property at every $(u,v)\in\gph F$.
    \end{enumerate}
\end{definition}

\begin{definition}[{\cite[Definition 5.1]{GfrOut22}}]\label{DefSCDssstar}
We say that $F:\R^n\tto\R^n$ is {\em\SCD \ssstar at} $(\ub,\vb)\in\gph F$, if $F$ has the \SCD property around $(\ub,\vb)$ and for every $\eps>0$ there is some $\delta>0$ such that
    \begin{align}\label{EqDefSSCSemiSmooth}
        \vert \skalp{u^*,u-\ub}-\skalp{v^*,v-\vb}\vert&\leq \eps
        \norm{(u,v)-(\ub,\vb)}\norm{(u^*,v^*)}
    \end{align}
holds for all $(u,v)\in \gph F\cap \B_\delta(\ub,\vb)$ and all $(v^*,u^*)\in L^*$, $L^*\in\Sp^*F(u,v)$.
\end{definition}

Let us now describe one iteration step of the \SCD \ssstar Newton method introduced in \cite{GfrOut21} for solving \eqref{EqIncl}. Assume we are given some iterate $\ee uj$. Since we cannot expect in general that $F(\ee uj)\not=\emptyset$ or that $0$ is close to $F(\ee uj)$, even if $\ee uj$ is close to a solution $\ub$, we first perform a preparatory step, the so-called {\em approximation step}, which yields $(\ee{\hat u}j,\ee{\hat v}j)\in\gph F$ as  an approximate projection of $(\ee uj,0)$ onto $\gph F$. For this approximation step, we require that 
    \begin{equation*}
        \norm{(\ee{\hat u}j,\ee{\hat v}j)-(\ub,0)}\leq \eta\norm{\ee uj-\ub} \,,
    \end{equation*}
for some constant $\eta>0$. Then, in the so-called \emph{Newton step}, we compute two $n\times n$ matrices $\ee Uj$ and $\ee Vj$ such that
    \begin{equation*}
        \rge({\ee Vj}^T, {\ee Uj}^T):=\Kl{({\ee Vj}^Tu, {\ee Uj}^Tu)\mv u\in\R^n}\in\Sp^*F(\ee{\hat u}j,\ee{\hat v}j) \,,
    \end{equation*}
determine the Newton direction $\ee{\triangle u}j$ as solution of the linear system
    \begin{equation}\label{EqNewtonStep}
        \ee Uj\triangle u=-\ee Vj\ee{\hat v}j
    \end{equation}
and set the next iterate as $\ee u{j+1}=\ee {\hat u}j+\ee{\triangle u}j$. The following convergence result holds:

\begin{theorem}[cf. {\cite[Corollary 5.6]{GfrOut22}}]\label{ThConvNewton}
Let $\ub$ be a solution of the inclusion \eqref{EqIncl}, assume that $F$ is \SCD \ssstar at $(\ub,0)\in\gph F$, and assume that for every subspace $L^*\in\Sp F(\ub,0)$ the regularity condition holds:
    \begin{equation*}
        (v^*,0)\in L^*\ \Longrightarrow v^*=0 \,.
    \end{equation*}
Then for every starting point $\ee u0$ sufficiently close to $\ub$, the procedure described above either stops after finitely many steps at a solution of \eqref{EqIncl} or produces a sequence $\ee uj$ which converges superlinearly to $\ub$.
\end{theorem}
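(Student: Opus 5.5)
The statement is the local convergence result for the SCD \ssstar Newton method, cited from \cite[Corollary 5.6]{GfrOut22}. The plan is the standard Newton-type argument: estimate the distance of the new iterate $\ee u{j+1}=\ee{\hat u}j+\ee{\triangle u}j$ to $\ub$ in terms of $\norm{(\ee{\hat u}j,\ee{\hat v}j)-(\ub,0)}$. The two ingredients are uniform invertibility of the Newton matrices near $(\ub,0)$ and the \ssstar estimate \eqref{EqDefSSCSemiSmooth}.

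\textbf{Step 1: uniform regularity.} First I would show the following. There are $\delta>0$ and $c>0$ such that for every $(u,v)\in\gph F\cap\B_\delta(\ub,0)$ and every $L^*\in\Sp^*F(u,v)$, any basis representation $L^*=\rge(V^T,U^T)$ yields an invertible $U$ with $\norm{U^{-1}V}\le c$. The basis may be normalized, e.g.\ so that $(V^T,U^T)$ has orthonormal columns.

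I would argue by contradiction. If this failed, there would be points $(u_k,v_k)\to(\ub,0)$ and subspaces $L_k^*\in\Sp^*F(u_k,v_k)$ containing normalized elements $(v_k^*,u_k^*)$ with $u_k^*\to0$. The space $\Z_n$ is compact, and $\Sp F$ has closed graph by its definition as a limit of tangent spaces, via a diagonal argument. Since $L\mapsto L^*$ is an isometry, a subsequence $L_k^*$ converges to some $L^*\in\Sp^*F(\ub,0)$. This $L^*$ contains an element $(v^*,0)$ with $\norm{v^*}=1$, contradicting the regularity condition.

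The same argument bounds $\norm{U^{-1}}$ relative to $\norm{(V^T,U^T)}$, so the ratio in Step 2 is controlled.

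\textbf{Step 2: one-step estimate.} Let $(v_i^*,u_i^*)$ be the rows of $(V,U)$, i.e.\ the elements of $L^*$ generated by the unit vectors. Applying \eqref{EqDefSSCSemiSmooth} at $(\hat u,\hat v)=(\ee{\hat u}j,\ee{\hat v}j)$ gives
\[
\norm{U(\hat u-\ub)-V\hat v}\le \eps\,\norm{(\hat u,\hat v)-(\ub,0)}\,\norm{(V,U)}.
\]
Combining this with the Newton equation $U\triangle u=-V\hat v$ yields
\[
U(\hat u+\triangle u-\ub)=U(\hat u-\ub)-V\hat v .
\]
Step 1 then gives
\[
\norm{\ee u{j+1}-\ub}\le \eps\, c'\,\norm{(\ee{\hat u}j,\ee{\hat v}j)-(\ub,0)}\le \eps\, c'\eta\,\norm{\ee uj-\ub},
\]
where the last inequality uses the approximation-step requirement.

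\textbf{Step 3: induction.} Choose $\eps$ with $\eps c'\eta<1/2$ and take the corresponding $\delta$ from the definition. If $\norm{\ee u0-\ub}$ is small enough, then $(\ee{\hat u}j,\ee{\hat v}j)$ stays in the $\delta$-ball where Steps 1 and 2 apply. By induction the errors contract linearly, so $\ee uj\to\ub$. Since $\eps$ can then be driven to $0$ along the sequence, the ratio $\norm{\ee u{j+1}-\ub}/\norm{\ee uj-\ub}$ tends to $0$, which is superlinear convergence. If some $\ee{\hat v}j=0$ with $\ee{\hat u}j$ a solution, the method stops there.

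The main obstacle is Step 1. It requires closedness of $\Sp F$, i.e.\ that limits of elements of $\Sp F(u_k,v_k)$ lie in $\Sp F(\ub,0)$, and a careful normalization so that the \ssstar bound, which scales with $\norm{(u^*,v^*)}$, is compatible with the bound on $U^{-1}$. I would settle this by taking orthonormal bases of $L^*$ and applying a diagonal argument to the defining sequences of tangent spaces.
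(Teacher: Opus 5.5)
Your proposal is correct and follows essentially the route behind the cited result \cite[Corollary 5.6]{GfrOut22}, which the paper quotes without proof: you get stability of the regularity condition near $(\ub,0)$ from compactness of $\Z_n$ and closedness of $\gph \Sp F$, combine it with the row-wise \ssstar estimate $\norm{U(\hat u-\ub)-V\hat v}\le\eps\norm{(\hat u,\hat v)-(\ub,0)}\norm{(V,U)}_F$, and finish with a contraction and induction argument. The one point worth stating explicitly is that the new iterate $\hat u-U^{-1}V\hat v$ depends only on $L^*$ and not on the chosen basis, because $U^{-1}V$ is basis-invariant. This is what justifies normalizing $(V^T,U^T)$ to orthonormal columns (or to $U=I$), which yields $\norm{U^{-1}}\norm{(V,U)}\le\sqrt{1+c^2}$ in the spectral norm, with an extra factor $\sqrt n$ if you use the Frobenius norm.
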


Note that Theorem~\ref{ThConvNewton} guarantees convergence only for starting points sufficiently close to a solution. Hence, in the following, we describe a globalized variant of the SCD \ssstar Newton method for finding a zero of $\partial\psi$ which ensures convergence from arbitrary starting points:

Given an iterate $(\ee xj,\ee zj)$, we compute in the approximation step
    \begin{equation*}
        \ee {\hat x}j\approx \argmin_x\psi(x,\ee zj)\,, \qquad \ee{\hat z}j=\Psi(\ee{\hat x}j) \,,
    \end{equation*}
resulting in
    \begin{equation*}
        (\ee{\hat x{}^*}j, \ee{\hat z{}^*}j)=\big(A^T(A\ee{\hat x}j-b^\delta)+B^T(\zeta^*+\sigma(B\ee{\hat x}j-\ee{\hat z}j),0)\big)\in\partial\psi(\ee{\hat x}j,\ee{\hat z}j)\,.
    \end{equation*}
Note that we have at our disposal also another subgradient, namely
    \begin{equation*}
        \ee {z^*}j= \zeta^*+\sigma(B\ee{\hat x}j-\ee{\hat z}j)\in \partial\alpha\norm{\ee{\hat z}j}_1 \,,
    \end{equation*}
which satisfies
    \begin{equation*}
        \ee{z_i^*}j
        \begin{cases}
        =\alpha \,, &\mbox{if $\ee{\hat z_i}j>0$} \,, \\
        \in[-\alpha,\alpha] \,, &\mbox{if $\ee{\hat z_i}j=0$} \,, \\
        =-\alpha \,, &\mbox{if $\ee{\hat z_i}j<0$} \,.
        \end{cases}
    \end{equation*}
Now we analyze the Newton step. The SCD of $\partial \norm{\cdot}_1$ is well-known, see, e.g., \cite[Example 3.8]{Gfr25a} and analogously, one can derive $\Sp(\partial\alpha\norm{\cdot}_1)$. We find that  $\Sp(\partial \alpha\norm{\cdot}_1)(\ee{\hat z}j,\ee{z^*}j)$ consists of the collection of all subspaces $\rge(P,W)$ with diagonal matrices $P$ and $W$ satisfying
    \begin{equation}\label{EqP}
        P_{ii}=
        \begin{cases}
        1 \,, & \mbox{if $i\in I^+\cup I^-$} \,, \\
        0 \,, & \mbox{else,}\end{cases}
        \qquad \text{and} \qquad
        W=I-P \,,
    \end{equation}
where $I^+,I^-$ are index sets satisfying
    \begin{equation*}
        \{i\mv \ee{\hat z_i}j>0\}\subset I^+\subset\{i\mv \ee{z_i^*}j=\alpha\}\,, \quad \text{and} \quad \{i\mv \ee{\hat z_i}j<0\}\subset I^-\subset\{i\mv \ee{z_i^*}j=-\alpha\} \,.
    \end{equation*}
It follows from \cite[Lemma 3.16]{Gfr25a} that
    \begin{align*}
        &\Sp(\partial\psi)\big((\ee{\hat x}j,\ee{\hat z}j),(\ee{\hat x{}^*}j,\ee{\hat z{}^*}j)\big)\\
        &\quad =\left\{\rge \kl{ \begin{pmatrix}I&0\\0&P\end{pmatrix},\begin{pmatrix}A^TA+\sigma B^TB&-\sigma B^TP\\
        -\sigma PB&\sigma P+W\end{pmatrix} } \Big\vert (P,W) \,\, \text{fulfills \eqref{EqP}}\right\}.
    \end{align*}
Furthermore, $\Sp^*(\partial\psi)\big((\ee{\hat x}j,\ee{\hat z}j),(\ee{\hat x{}^*}j,\ee{\hat z{}^*}j)\big)=\Sp(\partial\psi)\big((\ee{\hat x}j,\ee{\hat z}j),(\ee{\hat x{}^*}j,\ee{\hat z{}^*}j)\big)$, and all subspaces $L\in \Sp(\partial\psi)\big((\ee{\hat x}j,\ee{\hat z}j),(\ee{\hat x{}^*}j,\ee{\hat z{}^*}j)\big)$ are self-adjoint, i.e., $L=L^*$ \cite[Corollary 3.28]{GfrOut22}.

For computing the Newton direction, we can select an arbitrary subspace from the respective SCD, and we choose the one corresponding to the choice (in \eqref{EqP}) of
    \begin{equation}\label{EqI(z)}
        I^+\cup I^-=I(\ee{\hat z}j):=\{i\mv \ee{\hat z_i}j\not=0\} \,.
    \end{equation}
By \eqref{EqNewtonStep}, the Newton direction $(\ee{\triangle x}j,\ee{\triangle z}j)$ is given as solution of the linear system
    \begin{equation}\label{Eq_dx_dz}
        \begin{array}{ccccc}
            (A^TA+\sigma B^TB)\triangle x &-&\sigma B^TP\triangle z&=&-\ee{\hat x{}^*}j \,,
            \\
            -\sigma PB\triangle x&+&(\sigma P+W)\triangle z&=&-P\ee{\hat z{}^*}j \,.
        \end{array}
    \end{equation}
Taking into account \eqref{EqP} and \eqref{EqI(z)}, together with $\ee{\hat z{}^*}j=0$ we obtain from the second equation in \eqref{Eq_dx_dz} that
    \begin{equation}\label{Eq_dz}
        \triangle z_i=
        \begin{cases}
        0 \,, & \text{if} \,\,  i\not\in I(\ee{\hat z}j)\,,
        \\
        (B\triangle x)_i \,, &\text{if} \,\, i\in I(\ee{\hat z}j) \,.
        \end{cases}
    \end{equation}
Substituting $\triangle z$ into the first equation of \eqref{Eq_dx_dz} yields
    \begin{equation*}
        (A^TA+\sigma B^T(I-P)B)\triangle x=(A^TA+\sigma B^TWB)\triangle x=-\ee{\hat x{}^*}j \,,
    \end{equation*}
i.e., $\ee{\triangle x}j$ is a solution of the quadratic problem
    \begin{equation}\label{Eq_dx}
        \min_{\triangle x} \, \frac 12 \skalp{\triangle x, (A^TA+\sigma B^TWB)\triangle x}+\skalp{\ee{\hat x{}^*}j,\triangle x} \,.
    \end{equation}
However, for iterates far away from a solution, this might yield a poor Newton direction for the following reason: The components $\Psi_i$, $i\in I(\ee{\hat z}j)$ are continuously differentiable, in fact linear, around $\ee{\hat x}j$, and
    \begin{equation*}
        \nabla\Psi_i(\ee{\hat x}j)\ee{\triangle x}j=(B\ee{\triangle x}j)_i=\ee{\triangle z_i}j \,, \qquad \forall \, i\in I(\ee{\hat x}j) \,.
    \end{equation*}
Hence, using \eqref{EqPsi} and \eqref{Eq_dz}, we may conclude that
    \begin{equation*}
        \Psi_i(\ee{\hat x}j+\ee{\triangle x}j)=\Psi_i(\ee{\hat x}j)+\nabla\Psi_i(\ee{\hat x}j)\ee{\triangle x}j=\ee{\hat z_i}j+\ee{\triangle z}j_i \,,
        \forall \, i\in I(\ee{\hat z}j) \,,
    \end{equation*}
verifying that $\ee{\hat z_i}j$ and $\ee{\hat z_i}j+\ee{\triangle z}j_i$ have the same sign, i.e., $\ee{\triangle z}j_i/\ee{\hat z_i}j\geq -1$. However, whenever $\ee{\triangle z}j_i/\ee{\hat z_i}j< -1$, we can have a large approximation error
    \begin{equation*}
        \Psi_i(\ee{\hat x}j+\ee{\triangle x}j)-\Psi_i(\ee{\hat x}j)-\nabla\Psi_i(\ee{\hat x}j)\ee{\triangle x}j \,,
    \end{equation*}
due to the nonsmoothness of $\Psi$. Hence, we would ideally want to add the constraints
$(B\triangle x)_i/\ee{\hat z_i}j\geq-1$, $i\in I(\ee{\hat z}j)$ to the quadratic program \eqref{Eq_dx}, but this would considerably hinder its solution. Thus, we instead augment \eqref{Eq_dx} by the penalty term 
    \begin{equation*}
        \sum_{i\in I(\ee{\hat z}j)}\frac {\ee\rho j}2\kl{(B\triangle x)_i/{\ee{\hat z_i{}}j}}^2 \,,
    \end{equation*}
yielding the quadratic program
    \begin{equation}\label{Eq_dx1}
        \min \ee {\hat\psi}j(\triangle x):=\frac 12 \skalp{\triangle x, (A^TA+ B^T\ee Wj B)\triangle x}+\skalp{\ee{\hat x{}^*}j,\triangle x} \,,
    \end{equation}
where $\ee Wj$ is a diagonal matrix with entries
    \begin{equation*}
        \ee{W}j_{ii}
        =
        \begin{cases}
        \sigma \,, &\mbox{if $i\not\in I(\ee{\hat z}j)$} \,, 
        \\
        \frac {\ee\rho j}{{\ee{\hat z_i{}}j}^2} \,, & \mbox{otherwise} \,.
        \end{cases}
    \end{equation*}
For updating the penalty parameter $\ee \rho j$, we employ the following strategy: If the solution $\ee {\triangle  x}j$ of \eqref{Eq_dx1} satisfies
    \begin{equation}\label{Eq_rj}
        \ee \chi j:=\min\Kl{\frac{(B\ee{\triangle x}j)_i}{\ee{\hat z_i}j}\mv i\in I(\ee{\hat z}j)}<-1 \,,
    \end{equation}
then we increase $\ee\rho j$, otherwise, if $\ee \chi j>-1$, then we decrease $\ee \rho j$.

Finally, having computed the direction $\ee{\triangle x}j$, we perform a line search along $\ee{\triangle x}j$ to obtain a decrease in the continuously differentiable function $\vartheta$ defined in \eqref{EqTheta}. We summarize the above considerations into the following algorithm:

\begin{algorithm}[Inexact regularized SCD \ssstar Newton method for \eqref{EqSubProbl}]\label{AlgSubProbl}\mbox{ }\\
Let parameters $\nu\in(0,1)$, $\chi_1<-1<\chi_2<0$, $0<\bar \chi_2<1<\bar \chi_1$, $\ee\rho0>0$, two  real sequences $\ee{\eps}j_A,\ee{\eps}j_N$ with elements  belonging to $(0,1)$, a requested tolerance $\eps>0$  and a starting point $\ee x0$ be given. Set $\ee z0:=\Psi(\ee x0)$.
\\ \vspace{-5pt} \\ \noindent 
For $j=0,1,\ldots$, perform the following steps until $\norm{\nabla_x\psi(\ee xj,\ee zj)}\leq \eps$

\begin{itemize}
    \item {\bf Step 1:} Approximation step:  By  applying the method of conjugate gradients (CG) to the quadratic program
        \begin{equation*}
            \min_x\psi(x,\ee zj) \,,
        \end{equation*}
    compute a point $\ee{\hat x}j$ satisfying
        \begin{equation*}
            \norm{\nabla_x\psi(\ee{\hat x}j,\ee zj)}\leq \ee{\eps}j_A\norm{\nabla_x\psi(\ee xj,\ee zj)} \,,
        \end{equation*}
    and set $\ee{\hat z}j:=\Psi(\ee{\hat x}j)$.
    \item {\bf Step 2:} Newton step: Using the CG method applied to \eqref{Eq_dx1}, compute a Newton direction $\ee{\triangle x}j$ satisfying
        \begin{equation*}
            \norm{\nabla \ee{\hat\psi}j(\ee{\triangle x}j)}\leq \ee{\eps}j_N\norm{\nabla \ee{\hat\psi}j(0)} \,.
        \end{equation*}
    \item {\bf Step 3:} Let $\ee lj$ be the first nonnegative integer $l$ such that
        \begin{equation*}
            \vartheta(\ee{\hat x}j+2^{-l}\ee{\triangle x}j)\leq \vartheta(\ee{\hat x}j) +\nu 2^{-l}\skalp{\nabla\vartheta(\ee{\hat x}l),\ee{\triangle x}j} \,,
        \end{equation*}
    and set $\ee x{j+1}:= \ee{\hat x}j+2^{-\ee lj}\ee{\triangle x}j$ and $\ee z{j+1}:=\Psi(\ee x{j+1})$.
    \item {\bf Step 4:} If $I(\ee{\hat z}j)=\emptyset$, set $\ee\rho{j+1}:=\ee\rho j$. Otherwise compute $\ee \chi j$ by \eqref{Eq_rj}, and
        \begin{equation*}
            \ee\rho{j+1}:=\begin{cases}\ee\rho j\min\{\frac{\ee \chi j}{\chi_1},\bar \chi_1\} \,, &\mbox{if $\ee \chi j<\chi_1$} \,,\\
            \ee\rho j \,, &\mbox{if $\chi_1\leq\ee \chi j\leq \chi_2$} \,, \\
            \ee\rho j\max\{\frac{\ee \chi j}{\chi_2},\bar \chi_2\} \,, &\mbox{if $\ee \chi j>\chi_2$} \,.
\end{cases}
        \end{equation*}
\end{itemize}
\end{algorithm}

\noindent
For the CG method in Steps~1 and 2, we additionally require the following details:

\begin{enumerate}
    \item In Step~1, we require that we start the CG method for minimizing $\psi(\cdot,\ee zj)$ with $\ee xj$, so that we perform at least one step of the CG method and $\ee{\hat x}j$ satisfies $\psi(\ee{\hat x}j,\ee zj)<\psi(\ee xj,\ee zj)$. In case when we use a preconditioned CG method, we demand that for all $j$ the same preconditioner is used for the matrix 
        \begin{equation*}
            \nabla^2\psi(\cdot,\ee zj)=A^TA+\sigma B^TB \,.
        \end{equation*}
    \item In Step~2, we require that the CG method is started with $\triangle x=0$ so that the outcome $\ee{\triangle x}j$ satisfies $\skalp{\ee{\hat x{}^*}j,\ee{\triangle x}j}<0$. Since $\nabla\vartheta(\ee{\hat x}j)=\ee{\hat x{}^*}j$ and $\vartheta$ is continuously differentiable, the line search in Step~3 is well-defined, and therefore
        \begin{equation*}
            \psi(\ee x{j+1},\ee z{j+1})=\vartheta(\ee x{j+1})<\vartheta(\ee {\hat x}j)=\psi(\ee {\hat x}j,\ee {\hat z}j) \,.
        \end{equation*}
\end{enumerate}

Note that Steps~1 and 2 are well defined, since the underlying quadratic programs are convex and possess a solution. Therefore, the CG method is capable to compute  solutions of the quadratic programs in finitely many steps, and thus the approximate optimality condition for terminating the CG method can be fulfilled.

\begin{theorem}\label{ThFiniteConvAlg2}
Algorithm~\ref{AlgSubProbl} stops after finitely many iterations.
\end{theorem}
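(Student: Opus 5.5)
We argue by contradiction: suppose Algorithm~\ref{AlgSubProbl} never stops. Then $\norm{\nabla\vartheta(\ee xj)}=\norm{\nabla_x\psi(\ee xj,\ee zj)}>\eps$ for all $j$, because $\ee zj=\Psi(\ee xj)$ and \eqref{EqGradTheta} holds. Our aim is to show that $\nabla\vartheta(\ee{\hat x}j)\to0$ along a subsequence and to transfer this to $\ee x{j+1}$, which will contradict the stopping test.

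\textbf{Monotone decrease.} The approximation step uses CG started at $\ee xj$ on the convex quadratic $\psi(\cdot,\ee zj)$. Hence
\[
\vartheta(\ee{\hat x}j)\le\psi(\ee{\hat x}j,\ee zj)<\psi(\ee xj,\ee zj)=\vartheta(\ee xj).
\]
The line search gives $\vartheta(\ee x{j+1})\le\vartheta(\ee{\hat x}j)+\nu2^{-\ee lj}\skalp{\nabla\vartheta(\ee{\hat x}j),\ee{\triangle x}j}$. Since $\vartheta\ge\inf\vartheta>-\infty$ (it is convex and bounded below by the same argument as Proposition~\ref{PropExSolution}, or simply because $\psi$ is bounded below for fixed $\zeta^*,\sigma$), we obtain
\[
\sum_j 2^{-\ee lj}\abs{\skalp{\ee{\hat x{}^*}j,\ee{\triangle x}j}}<\infty,\qquad \sum_j\big(\vartheta(\ee xj)-\vartheta(\ee{\hat x}j)\big)<\infty .
\]

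\textbf{Sufficient decrease from the approximation step.} This is where we use the CG details. For a convex quadratic $q$ with Hessian $H=A^TA+\sigma B^TB$, independent of $j$, and a fixed preconditioner, one CG step from $x$ decreases $q$ by at least $\frac{\norm{\nabla q(x)}^2}{2\lambda_{\max}(H)}$ (in the preconditioned norm, up to a fixed constant). This holds because the first CG step is exact line minimization along the (preconditioned) steepest descent direction, and subsequent steps only decrease $q$ further. Consequently
\[
\vartheta(\ee xj)-\vartheta(\ee{\hat x}j)\ge c\norm{\nabla\vartheta(\ee xj)}^2> c\eps^2
\]
with $c$ independent of $j$. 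Summing over $j$ contradicts the summability above. This also makes the Newton/line-search analysis unnecessary for finite termination: only the descent property of Step~3, namely $\vartheta(\ee x{j+1})\le\vartheta(\ee{\hat x}j)$, is needed. That property follows from the Step~2 requirement $\skalp{\ee{\hat x{}^*}j,\ee{\triangle x}j}<0$, from the fact that $\vartheta$ is $C^1$ with Lipschitz gradient, and from the finiteness of the Armijo search.

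\textbf{Main obstacle.} The main obstacle is justifying the uniform lower bound on the CG decrease. The same Hessian $H$ and the same preconditioner $M$ are used for all $j$; this is exactly why the paper demands a common preconditioner. The first preconditioned CG step gives decrease
\[
\frac{(g^TM^{-1}g)^2}{2\,g^TM^{-1}HM^{-1}g}\ge\frac{\lambda_{\min}(M^{-1})^2}{2\lambda_{\max}(M^{-1}HM^{-1})\lambda_{\max}(M^{-1})}\,\norm{g}^2\cdot\lambda_{\min}(M^{-1})^{-1}\cdots,
\]
which after bounding is a fixed multiple of $\norm g^2$. Note that $g\ne0$ here, since the stopping test failed. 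Some care is also needed when $H$ is singular, but $g=\nabla_x\psi$ lies in $\rge H$ shifted only by a constant, and the quotient bound uses only $\lambda_{\max}$, so singularity causes no problem.
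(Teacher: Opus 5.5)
Your proof is correct and is essentially the paper's proof. The first preconditioned CG step in Step~1 gives a decrease of at least $\frac{\lambda_{\min}(Q)^2}{2\lambda_{\max}(QHQ)}\eps^2$, where $Q=M^{-1}$ and $H=A^TA+\sigma B^TB$ do not depend on $j$; every later step only decreases $\psi$, so this contradicts the lower bound $\psi\geq-\norm{\zeta^*}^2/(2\sigma)$. Your summability phrasing is equivalent to the paper's conclusion $\psi(\ee xj,\ee zj)\to-\infty$.

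Two small fixes are needed. Replace the garbled constant in your last display by exactly the expression above. Also justify boundedness below by completing the square as in \eqref{EqMin_z_Lag}, not by convexity or Proposition~\ref{PropExSolution}, since neither of those gives it.
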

\begin{proof}
Assume on the contrary that Algorithm~\ref{AlgSubProbl} does not terminate after finitely many iterations. In Step~1, we perform at least one step of the (preconditioned) CG method, and the outcome of the first iterate is of the form
    \begin{equation*}
        \ee{\tilde x}j=\ee xj -\tau Q\nabla_x\psi(\ee xj,\ee zj) \,,
    \end{equation*}
with    
    \begin{equation*}
        \tau=\frac{\skalp{\nabla_x\psi(\ee xj,\ee zj),Q\nabla_x\psi(\ee xj,\ee zj)}}{\skalp{Q\nabla_x\psi(\ee xj,\ee zj),(A^TA+\sigma B^TB)Q\nabla_x\psi(\ee xj,\ee zj)}} \,,
    \end{equation*}
where $Q$ is a symmetric positive definite $n\times n$ matrix used for preconditioning. For the respective function value, we obtain
    \begin{align*}
        \psi(\ee{\tilde x}j,\ee zj)-\psi(\ee xj,\ee zj)&=-\frac 12\frac{\skalp{\nabla_x\psi(\ee xj,\ee zj),Q\nabla_x\psi(\ee xj,\ee zj)}^2}{\skalp{Q\nabla_x\psi(\ee xj,\ee zj),(A^TA+\sigma B^TB)Q\nabla_x\psi(\ee xj,\ee zj)}}
        \\
        &\leq-\frac12 \frac{s^2\norm{\nabla_x\psi(\ee xj,\ee zj)}^2}S\leq-\frac {s^2\eps^2}{2S} \,,
    \end{align*}
where $s$ denotes the smallest eigenvalue of $Q$ and $S$ denotes the largest eigenvalue of $Q^T(A^TA+\sigma B^TB)Q$. Together with the inequalities
    \begin{equation*}
        \psi(\ee xj,\ee zj)-\frac {s^2\eps^2}{2S}\geq \psi(\ee{\tilde x}j,\ee zj)\geq \psi(\ee{\hat x}j,\ee zj)\geq \psi(\ee{\hat x}j,\ee {\hat z}j)>\psi(\ee x{j+1},\ee z{j+1}) \,,
    \end{equation*}
we conclude that $\lim_{j\to\infty}\psi(\ee xj,\ee zj)=-\infty$, contradicting the bound
    \begin{equation*}
        \inf_{x,z}\psi(x,z)=\inf_{x,z}\frac 12 \norm{Ax-b^\delta}^2-\frac {\norm{\zeta^*}^2}{2\sigma}+\frac \sigma2\norm{z-(Bx+\frac{\zeta^*}\sigma)}^2+ \alpha \norm{z}_1\geq-\frac {\norm{\zeta^*}^2}{2\sigma} \,.
    \end{equation*}
Hence, Algorithm~\ref{AlgSubProbl} terminates after finitely many iterations.
\end{proof}

If we run Algorithm~\ref{AlgSubProbl} with tolerance $\eps=0$ in order to produce an infinite number of iterations, we can easily infer from the proof of Theorem~\ref{ThFiniteConvAlg2} that
    \begin{equation*}
        \lim_{j\to\infty}\norm{\nabla_x\psi(\ee xj,\ee zj)}=\lim_{j\to\infty}\norm{\nabla_x\psi(\ee xj,\Psi(\ee xj))}=\lim_{j\to\infty}\norm{\nabla\vartheta(\ee xj)}=0 \,,
    \end{equation*}
and therefore, every accumulation point $\xb$ of the sequence $\ee xj$ together with $\zb:=\Psi(\bar x)$ minimizes $\psi$. Let us now state the following result on superlinear convergence.

\begin{theorem}
Assume that the function $\psi$ has a unique minimizer $(\xb,\zb)$ and assume that the subgradient $\zba:=\zeta^*+\sigma(B\xb-\zb)\in\partial \alpha\norm{\zb}_1$ fulfills the condition
    \begin{equation*}
        \vert\zba_i\vert<\alpha,\ i\not\in I(\zb) \,.
    \end{equation*}
Then the sequence $\ee xj$ produced by Algorithm~\ref{AlgSubProbl} with $\eps=0$ converges superlinearly to $\xb$, provided that $\lim_{j\to\infty}\ee \eps j_N=0$.
\end{theorem}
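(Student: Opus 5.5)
The plan is to show that near $\xb$ the iteration reduces to an inexact Newton method for the smooth equation $\nabla\vartheta(x)=0$, and that full steps are eventually accepted. The argument has four steps: global convergence, local structure of $\Psi$, the Newton step with a vanishing penalty, and acceptance of the unit step.

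\textbf{Step 1 (global convergence).} By the remark after Theorem~\ref{ThFiniteConvAlg2}, $\nabla\vartheta(\ee xj)\to0$. I will also show that $\ee xj$ is bounded. Since $\psi$ is convex and has a unique minimizer, its level sets are bounded, and hence so are those of $\vartheta$. As $\vartheta(\ee xj)$ is decreasing, the sequence stays in a bounded level set. Every accumulation point then minimizes $\vartheta$, and uniqueness gives $\ee xj\to\xb$. Because $\Psi$ is continuous, it follows that $\ee zj\to\zb$. In Step~1 of the algorithm, $\norm{\nabla\vartheta}$-type decrease together with strong convexity in $x$ near $\xb$ yields $\ee{\hat x}j\to\xb$ and $\ee{\hat z}j\to\zb$ as well.

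\textbf{Step 2 (local structure of $\Psi$).} Strict complementarity $\vert\zba_i\vert<\alpha$ for $i\notin I(\zb)$ means $\vert(B\xb)_i+\zeta^*_i/\sigma\vert<\alpha/\sigma$ for these $i$. For $i\in I(\zb)$ the corresponding argument lies strictly outside $[-\alpha/\sigma,\alpha/\sigma]$. Hence $\Psi$ is affine on a neighborhood $U$ of $\xb$, with $I(\Psi(x))=I(\zb)$ for all $x\in U$. Consequently $\vartheta$ is quadratic on $U$ with Hessian $H:=A^TA+\sigma B^T(I-\bar P)B$, where $\bar P$ is the projector onto $I(\zb)$. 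Uniqueness of the minimizer forces $H\succ0$. Otherwise a direction $d\neq0$ with $Hd=0$ would give $\vartheta$ constant along $\xb+td$ for small $t$, and by convexity $\vartheta$ would have a continuum of minimizers, contradicting uniqueness.

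\textbf{Step 3 (the Newton step and the penalty).} The system \eqref{Eq_dx1} uses the matrix $A^TA+B^T\ee WjB$. This equals $H$ plus the perturbation $\sum_{i\in I(\zb)}\frac{\ee\rho j}{(\ee{\hat z_i}j)^2}B_i^TB_i$. The main obstacle is showing that this penalty perturbation vanishes, i.e.\ $\ee\rho j\to0$.

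Near $\xb$, $\ee{\triangle x}j=O(\norm{\nabla\vartheta(\ee{\hat x}j)})\to0$, while $\vert\ee{\hat z_i}j\vert$ stays bounded away from $0$ for $i\in I(\zb)$. Hence $\ee\chi j\to0>\chi_2$. By Step~4 of the algorithm, $\ee\rho j$ is then eventually multiplied by $\max\{\ee\chi j/\chi_2,\bar\chi_2\}$ at every iteration, a factor that eventually equals $\bar\chi_2<1$, so $\ee\rho j\to0$ geometrically. With this, $\ee{\triangle x}j$ is an inexact Newton direction for the quadratic $\vartheta$, with matrix error $o(1)$ and residual tolerance $\ee\eps j_N\to0$. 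Therefore
\[
\norm{\ee{\hat x}j+\ee{\triangle x}j-\xb}=o(\norm{\ee{\hat x}j-\xb}).
\]

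\textbf{Step 4 (acceptance of the unit step and superlinear rate).} Since $\vartheta$ is quadratic with $H\succ0$ and the direction is asymptotically the exact Newton direction, the Armijo test with $\nu<1$ will accept $l=0$. The approximation step is a contraction: $\norm{\ee{\hat x}j-\xb}\le C\norm{\ee xj-\xb}$ follows from $\norm{\nabla_x\psi(\ee{\hat x}j,\ee zj)}\le\norm{\nabla\vartheta(\ee xj)}$ together with local strong convexity. Combining this with Step~3 gives $\norm{\ee x{j+1}-\xb}=o(\norm{\ee xj-\xb})$, which is the claimed superlinear convergence. Alternatively, one could invoke Theorem~\ref{ThConvNewton}, whose regularity condition corresponds to $H\succ0$. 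The direct argument above is preferable, however, because it handles the penalty term and the inexact CG solves explicitly.
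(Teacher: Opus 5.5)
Your architecture matches the paper's. You use descent of $\vartheta$ plus bounded level sets for global convergence, and strict complementarity to make $\Psi$ affine near $\xb$. Uniqueness gives $H=A^TA+\sigma B^T(I-\bar P)B\succ0$, and $\ee\rho j\to0$ follows from $\ee\chi j\to0$. You then use the inexact-Newton estimate (the paper's \eqref{EqAuxBnd1}) and a bound $\norm{\ee{\hat x}j-\xb}\le C\norm{\ee xj-\xb}$. This bound is not a contraction, but it is all you need. Steps 1--3 are sound. Your explicit argument that $\ee\rho j$ is eventually multiplied by $\bar\chi_2$ in every iteration is more detailed than the paper's.

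\textbf{The gap is in Step 4.} The claim that the Armijo test accepts $l=0$ for every $\nu<1$ is false. For a (nearly) exact Newton direction $d$ on the quadratic $\vartheta$, one has
$\vartheta(\ee{\hat x}j+d)-\vartheta(\ee{\hat x}j)=\frac12\skalp{\nabla\vartheta(\ee{\hat x}j),d}\,(1+o(1))$, with $\skalp{\nabla\vartheta(\ee{\hat x}j),d}<0$.
So for $\nu>\frac12$ the test fails at $l=0$ for all large $j$. Then $\ee x{j+1}-\xb=(1-2^{-\ee lj})(\ee{\hat x}j-\xb)+o(\norm{\ee{\hat x}j-\xb})$ with $\ee lj\ge1$. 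Since the approximation step (one block minimization in $x$ with $z$ frozen) is in general only linearly contracting, superlinear convergence is lost. A restriction $\nu\le\frac12$ is therefore genuinely needed, beyond the $\nu\in(0,1)$ allowed in Algorithm~\ref{AlgSubProbl}.

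The paper obtains acceptance non-asymptotically, in three moves:
\begin{enumerate}
\item Because CG is started at $0$, $\skalp{\nabla\ee{\hat\psi}j(\ee{\triangle x}j),\ee{\triangle x}j}=0$. Hence $\ee{\hat\psi}j(\ee{\triangle x}j)-\ee{\hat\psi}j(0)=\frac12\skalp{\nabla\vartheta(\ee{\hat x}j),\ee{\triangle x}j}$ holds exactly.
\item Since $\ee Wj\succeq\sigma(I-\bar P)$ and $\vartheta$ is quadratic on your neighborhood $U$ (taken to be a ball, which contains $\ee{\hat x}j+\ee{\triangle x}j$ by Step~3), the model $\ee{\hat\psi}j$ overestimates $\vartheta$.
\item Together these give $\vartheta(\ee{\hat x}j+\ee{\triangle x}j)-\vartheta(\ee{\hat x}j)\le\frac12\skalp{\nabla\vartheta(\ee{\hat x}j),\ee{\triangle x}j}$, so $\ee lj=0$ whenever $\nu\le\frac12$.
\end{enumerate}
The paper's own final strict inequality $\frac12\skalp{\cdot}<\nu\skalp{\cdot}$ also tacitly assumes $\nu<\frac12$; the experiments use $\nu=0.1$.

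To complete your proof, state the restriction on $\nu$ explicitly and write out one of the two acceptance arguments. Your asymptotic version needs $\nu<\frac12$ strictly, together with $\ee{\hat x}j+\ee{\triangle x}j\in U$.
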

\begin{proof}
Clearly, $\zb=\Psi(\xb)$ and $\nabla\vartheta(\xb)=0$. Let the radius $r>0$ be chosen such that for every $\triangle x$ with $\norm{\triangle x}\leq r$ and for every $i\in I(\zb)$, the components $\zb_i$ and $(\zb+B\triangle x)_i$ have the same sign, and for every $i\not\in I(\zb)$ there holds $\vert (\zba+\sigma B\triangle x)_i\vert <\alpha$. Then it follows from \eqref{EqPsi_i} that for every such $\triangle x$, there holds
    \begin{equation*}
        \Psi_i(\xb+\triangle x)=\begin{cases}(\zb+B\triangle x)_i \,, &\mbox{if $i\in I(\zb)$} \,,
        \\
        \zb_i=0 \,, &\mbox{otherwise} \,.
        \end{cases}
    \end{equation*}
Hence, we obtain from \eqref{EqGradTheta} that
    \begin{align*}
        &\nabla\vartheta(\xb+\triangle x)= A^T(A(\xb+\triangle x)-b^\delta)+B^T\big(\zeta^*+\sigma(B(\xb+\triangle x)-\Psi(\xb+\triangle x))\big)
        \\
        &\quad =\nabla\vartheta(\xb)+(A^TA+\sigma B^TB)\triangle x+\sigma B^T\big(\Psi(\xb)-\Psi(\xb+\triangle x)\big)
        \\
        &\quad =\nabla\vartheta(\xb)+(A^TA+\sigma B^TB)\triangle x- \sigma \sum_{i\in I(\zb)}B_i^TB_i\triangle x 
        \\
        &\quad = \nabla\vartheta(\xb)+ (A^TA+\sigma B^T\bar W B)\triangle x
        =(A^TA+\sigma B^T\bar W B)\triangle x \,,
    \end{align*}
where $B_i$ denotes the $i$-th row of $B$ and $\bar W$ is the $n\times n$ diagonal matrix with entries
    \begin{equation*}
        \bar W_{ii}=\begin{cases}0 \,, &\mbox{if $i\in I(\zb)$} \,, \\
        1 \,, &\mbox{otherwise} \,. \end{cases}
    \end{equation*}
It follows that the function $\vartheta$ is quadratic on the ball $\B_r(\xb)$ around $\xb$ with radius $r$. Furthermore, the matrix $\nabla^2\vartheta(\xb)=A^TA+\sigma B^T\bar W B$ is nonsingular. To see this, assume on the contrary that there is a nonzero direction $\triangle x$ satisfying $(A^TA+\sigma B^T\bar W B)\triangle x=0$. We may assume that $\norm{\triangle x}<r$, implying
    \begin{equation*}
        \nabla\vartheta(\xb+\triangle x)=\nabla\vartheta(\xb)+(A^TA+\sigma B^T\bar W B)\triangle x=0 \,.
    \end{equation*}
By convexity of $\vartheta$, the point $\xb+\triangle x$ is another minimizer of $\vartheta$ and therefore, $(\xb+\triangle x,\Psi(\xb+\triangle x))$ is another minimizer of $\psi$, which is a contradiction. Hence, the matrix $A^TA+\sigma B^T\bar W B$ is nonsingular and, since it is obviously positive semidefinite, it is also positive definite. In what follows, we denote by $\bar s>0$ its smallest eigenvalue.

Thus, $\vartheta$ is strongly convex near $\xb$, and since $\vartheta(\ee xj)=\psi(\ee xj,\Psi(\ee xj))\to\vartheta(\xb)$ as $j\to\infty$, the sequence  $\ee xj$ converges to $\xb$. From $\vartheta(\ee{\hat x}j)<\vartheta(\ee xj)$, it follows that $\ee{\hat x}j$ also converges to $\xb$. Thus, we may conclude that for all $j$ sufficiently large, the points $\ee xj$ and $\ee{\hat x}j$ belong to $\B_r(\xb)$, and our definition of $r$ ensures that $I(\ee{\hat z}j)= I(\zb)$ holds for those $j$, and correspondingly we obtain that
    \begin{equation*}
        \ee {\hat x{}^*}j=\nabla\vartheta(\ee{\hat x}j)= (A^TA+\sigma B^T\bar W B)(\ee{\hat x}j-\xb) \,, \qquad\text{and}\qquad\norm{\ee {\hat x{}^*}j}\leq \bar S\norm{\ee{\hat x}j-\xb} \,,
    \end{equation*}
where $\bar S$ denotes the largest eigenvalue of $A^TA+B^T\bar WB$. Furthermore,
    \begin{equation*}
        0\leq\ee Wj_{ii}-\sigma\bar W_{ii}=\begin{cases}0 \,, &\mbox{if $i\not\in I(\zb)$} \,, 
        \\
        \ee\rho j/(\ee{\hat z}j_i)^2 \,, &\mbox{otherwise} \,, \end{cases}
    \end{equation*}
and we conclude that $\ee sj$, the smallest eigenvalue of $A^TA+B^T\ee WjB$, satisfies the inequality $\ee sj\geq\bar s>0$. Hence, it follows from the inequality
    \begin{align*}
        &\norm{(A^TA+B^T\ee WjB)\ee{\triangle x}j}-\norm{\ee {\hat x{}^*}j} \leq \norm{(A^TA+B^T\ee WjB)\ee{\triangle x}j+\ee{\hat x{}^*}j}
        \\
        &\qquad =
        \norm{\nabla \ee{\hat\psi}j(\ee{\triangle x}j)}
        \leq \ee{\eps}j_N\norm{\nabla \ee{\hat\psi}j(0)}=\ee{\eps}j_N\norm{\ee {\hat x{}^*}j}
    \end{align*}
that 
    \begin{equation*}
        \norm{\ee{\triangle x}j}\leq \frac{(1+\ee{\eps}j_N)}{\bar s}\norm{\ee {\hat x{}^*}j}\to 0\,,
        \qquad \text{as} \qquad j\to\infty \,. 
    \end{equation*}
Taking into account that $\ee{\hat z}j_i\to \zb_i\not=0$ for $i\in I(\ee{\hat z}j)=I(\zb)$, we may infer that $\lim_{j\to\infty}\ee \chi j=0$, $\lim_{j\to\infty}\ee \rho j=0$, and $\lim_{j\to\infty}\ee Wj=\sigma\bar W$. Since
    \begin{align*}
        &\ee\eps j_N\norm{\ee{\hat x{}^*}j}=\ee\eps j_N\norm{(A^TA+B^T\bar WB)(\ee{\hat x}j-\xb)}\geq\norm{(A^TA+B^T\ee WjB)\ee{\triangle x}j+\ee{\hat x{}^*}j}\\
        &\qquad =\norm{(A^TA+B^T\ee WjB)\ee{\triangle x}j+(A^TA+\sigma B^T\bar WB)(\ee{\hat x}j-\xb)}\\
        &\qquad \geq\norm{(A^TA+B^T\ee WjB)(\ee{\triangle x}j+\ee{\hat x}j-\xb)}-\norm{B^T(\ee Wj-\sigma \bar W)B(\ee{\hat x}j-\xb)} \,,
    \end{align*}
we obtain that
    \begin{equation}\label{EqAuxBnd1}
        \norm{\ee{\hat x}j+\ee{\triangle x}j-\xb}\leq\frac{\bar S\ee\eps j_N+\norm{B^T(\ee Wj-\sigma\bar W)B}}{\bar s}\norm{\ee{\hat x}j-\xb}\,.
    \end{equation}
Thus, we also have that $\ee{\hat x}j+\ee{\triangle x}j\in\B_r(\xb)$ for all $j$ sufficiently large, and we claim that $\ee lj=0$ in Step~3 of Algorithm~\ref{AlgSubProbl}, resulting in $\ee x{j+1}=\ee{\hat x}j+\ee{\triangle x}j$. Indeed, the use of the CG method ensures that
    \begin{equation*}
        \ee{\hat\psi} j(\ee{\triangle x}j)-\ee{\hat\psi} j(0)=\frac 12 \skalp{\nabla \ee{\hat\psi} j(0),\ee{\triangle x}j}= \frac 12 \skalp{\ee{\hat x{}^*}j,\ee{\triangle x}j}\leq 0 \,,
    \end{equation*}
and from 
    \begin{align*}
        \ee{\hat\psi} j(\ee{\triangle x}j)-\ee{\hat\psi} j(0)&=\vartheta(\ee {\hat x}j+\ee{\triangle x}j)-\vartheta(\ee {\hat x}j)+\frac 12\skalp{\ee{\triangle x}j,(\ee Wj-\sigma\bar W)\ee{\triangle x}j}
        \\
        &\geq \vartheta(\ee {\hat x}j+\ee{\triangle x}j)-\vartheta(\ee {\hat x}j) \,,
    \end{align*}
we infer that
    \begin{equation*}
        \vartheta(\ee {\hat x}j+\ee{\triangle x}j)-\vartheta(\ee {\hat x}j)\leq \frac 12 \skalp{\ee{\hat x{}^*}j,\ee{\triangle x}j}=\frac 12 \skalp{\nabla\vartheta(\ee{\hat x}j),\ee{\triangle x}j}<\nu\skalp{\nabla\vartheta(\ee{\hat x}j),\ee{\triangle x}j} \,,
    \end{equation*}
and consequently $\ee lj=0$. Finally, since
    \begin{align*}
        & \frac{\bar s}2\norm{\ee{\hat x}j-\xb}^2\leq\frac 12 \skalp{\ee{\hat x}j-\xb, (A^TA+\sigma B^T\bar W B)\ee{\hat x}j-\xb}=\vartheta(\ee{\hat x}j)-\vartheta(\xb)
        \\
        & \qquad \leq 
        \vartheta(\ee xj)-\vartheta(\xb)
        =\frac 12 \skalp{\ee xj-\xb, (A^TA+\sigma B^T\bar W B)\ee xj-\xb}\leq \frac{\bar S}2\norm{\ee  xj-\xb}^2 \,,
    \end{align*}
we obtain from \eqref{EqAuxBnd1} the inequality
    \begin{equation*}
        \norm{\ee x{j+1}-\xb}\leq \frac{\sqrt{\bar S}(\bar S\ee\eps j_N+\norm{B^T(\ee Wj-\sigma\bar W)B})}{\bar s^{3/2}}\norm{\ee  xj-\xb} \,,
    \end{equation*}
which proves the superlinear convergence of $\ee xj$ to $\xb$.
\end{proof}

\section{Numerical experiments}\label{sect_numerics}

In this section, we present results obtained with our \ssstar Newton approach for TV regularization on numerical experiments based on two (large-scale) tomographic imaging problems: X-ray computerized tomography (CT) and photoacoustic tomography (PAT). Furthermore, we compare our results to those obtained with state-of-the-art methods: ADMM, the Chambolle-Pock method, and the ``approximate'' iteration \eqref{method_BB}.

\subsection{Test setting I: X-ray computerized tomography (CT)}\label{subsect_setting_CT}

For the first test setting, we consider X-ray CT based on the 2D Radon transform \cite{Natterer_2001,Louis_1989}
    \begin{equation}\label{Radon_A}
    \begin{split}
        (\mathcal{A} x)(\sigma,\theta) :=  
        \int_\R x (\sigma \omega(\theta) + \tau \omega(\theta)^\perp) \, d\tau \,,
    \end{split}	
    \end{equation}
where $\omega(\theta) = (\cos(\theta),\sin(\theta))^T$ for $\theta \in [0,2\pi)$ and $\sigma \in \R$. The Radon transform can bee discretized into a finite-dimensional operator $A$ in many different ways, with both matrix-free and matrix-based implementations being available \cite{Hansen_2018,Astra_Toolbox}. Note that when using a matrix-free implementation, the corresponding routines for evaluating $Ax$ and $A^Tb$ have to be consistent (which excludes the use of Matlabs \texttt{radon} command here). In our numerical experiments, we use the walnut dataset released 
in \cite{FIP_Walnut_2015}, which contains sinograms $b$ of different resolution, as well as the corresponding measurement matrix $A$ discretizing $\mathcal{A}$.

\begin{figure}[ht!]
    \centering
    \includegraphics[width=\textwidth]{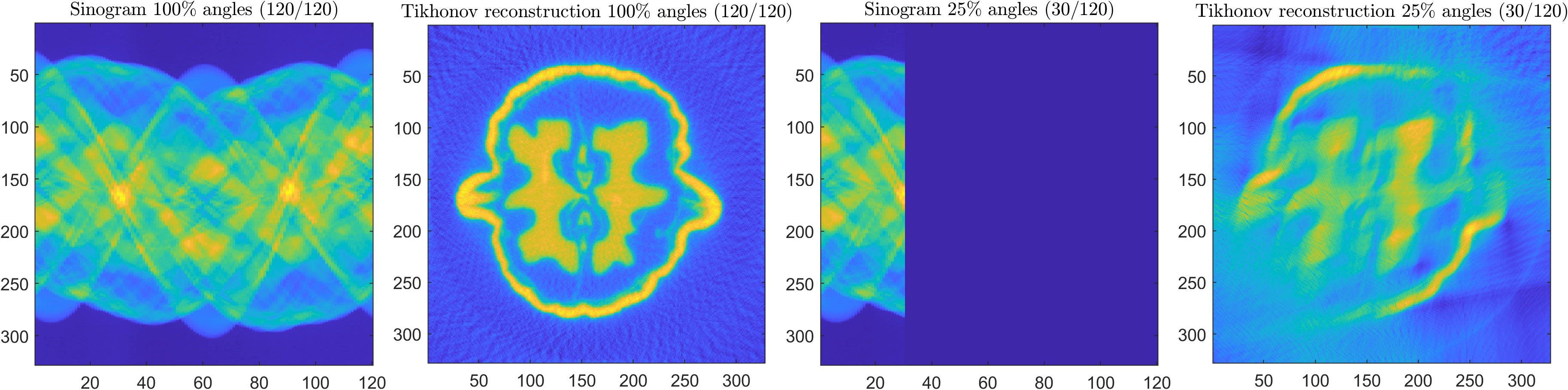}
    \caption{Test setting I (CT): Examples of sinogram data ($100\%$ and $25\%$ angles) and standard Tikhonov reconstructions with $L^2$-penalty (no TV). Adapted/reproduced from the dataset \url{https://fips.fi/open-datasets/x-ray-tomographic-datasets/tomographic-x-ray-data-of-a-walnut}, \mbox{CC~BY~4.0.} See also \cite{FIP_Walnut_2015}.}
    \label{fig_CT_Setting}
\end{figure}

In our experiments, we considered several different (coarse and fine) spatial resolutions ($82\times 82$, $164\times 164$, $328\times 328$ pixels), numbers of parallel rays ($20$ and $120$), and limited-angle settings ($25\%$, $50\%$, $75\%$, $100\%$). Figure~\ref{fig_CT_Setting} depicts two sinograms, corresponding to $100\%$ and $25\%$ angles, for resolution $328 \times 328$ and $120$ parallel rays, as well as a simple Tikhonov inversion with an $L^2$ instead of a TV penalty term ($\alpha = 10$).

\subsection{Test setting II: Photoacoustic tomography (PAT)}\label{subsect_setting_PAT}

For the second test setting, we consider a tomographic inverse problem relating to PAT. PAT is a hybrid imaging modality which leverages the photoacoustic effect, combining optical contrast with ultrasonic resolution \cite{li2009,beard2011,Kuchment_Kunyansky_2008,wang2011photoacoustic}. 

In a typical PAT experiment, a short optical pulse illuminates the target, leading to a thermal expansion in regions with optical absorption. This process generates an initial pressure distribution inside the target medium. The magnitude of this pressure depends on the local optical absorption and thermal expansion properties of the tissue. Following this excitation, the induced pressure distribution relaxes as an acoustic wave. Due to the differences in time scales between the optical and acoustical phenomena, the generation of the initial pressure is commonly assumed to be instantaneous. The imaging task in PAT is therefore to recover the initial pressure distribution from measurements of the propagating acoustic waves.

Mathematically, this problem is modelled as follows: In an acoustically homogeneous and non-attenuating medium, the acoustic pressure $p(r,t)$ satisfies the wave equation
    \begin{equation}\label{eq:wave-eq}
        \kl{ \frac{1}{c^2(s)} \frac{\partial^2}{\partial t^2} - \Delta_s } p(s,t) = 0 \,,
        \qquad 
        \forall \, (s,t) \in \mathbb{R}^d \times (0,T] \,,
    \end{equation}
subject to the initial conditions $p(s,0) = p_0(s)$ and $\partial_t p(s,0) = 0$, where $p_0(s)$ denotes the initial pressure distribution and $c(s)$ is the sound speed of the medium, which is assumed to be known. Note that in many applications, as well as in our numerical experiments conducted below, $c$ is also assumed to be constant.

\begin{figure}[ht!]
    \centering
    \includegraphics[width=0.9\textwidth]{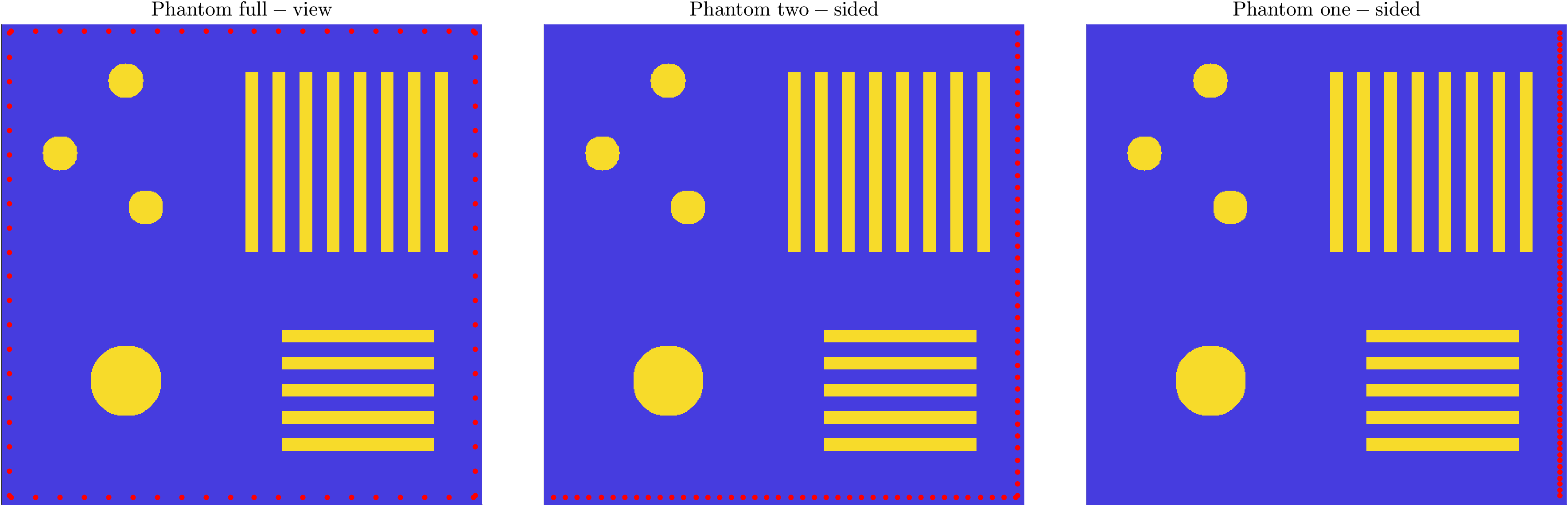}
    \caption{Test setting II (PAT): Ground truth $p_0$ and sensor locations (red dots).}
    \label{fig_PAT_Setting}
\end{figure}

PAT data consist of time-dependent pressure measurements recorded at sensor locations on the boundary $\partial \Omega$ of the target. Figure~\ref{fig_PAT_Setting} illustrates the three different senor configurations used in our experiments, correspondingly referred to as ``full-view'', ``one-sided'', and ``two-sided'', mimicking a photoacoustic-setup with a Fabry-Perot based sensor head (see, e.g., \cite{Ellwood_2017,Zhang_Laufer_Beard_2008}), as well as the ground truth pressure $p_0(s)$. For a finite set of point-like sensors at positions $\{s_i\}_{i=1}^M \subset \partial \Omega$, which in Figure~\ref{fig_PAT_Setting} are illustrated as red dots ($80$ total, equally spaced), these measurements are mathematically given by
    \begin{equation*}
        p(s_i,t)\,,  \qquad \forall \, t \in [0,T]\,, i = 1 \,, \dots \,, M \,.
    \end{equation*}
This measurement process can mathematically be encoded by the forward operator
    \begin{equation*}
        K : p_0 \mapsto \Kl{ p(s_i,t) }_{i=1}^{N_s},
    \end{equation*}
where $p(s,t)$ is the solution of the wave equation \eqref{eq:wave-eq} with initial data $p_0$. Hence,
    \begin{equation*}
        (K p_0)(i,t) := p(s_i,t), 
        \qquad \forall \, (i,t) \in \{1,\dots,N_s\} \times [0,T] \,,
    \end{equation*}
which can be seen to be a linear operator in $p_0$. 
The solution to the wave-equation \eqref{eq:wave-eq} can be numerically approximated using a pseudospectral method. For this, we use the MATLAB \texttt{k-Wave} toolbox \cite{treeby2010a}, which simulates acoustic wave propagation by solving a first-order formulation of the wave equation in Fourier domain (using a perfectly-matched layer boundary condition) and finite differences in time. Given an initial pressure distribution $p_0$, \texttt{k-Wave} computes the corresponding pressure field $p(s,t)$ and returns the simulated time-series sensor data at prescribed sensor locations, thus providing a discrete approximation of the forward operator $K$.

In our numerical experiments, synthetic sensor data is created using a fine spatial discretization ($1000 \times 1000$ pixels) in \texttt{k-Wave}. The temporal discretization is set according to the Nyquist frequency ($100$ MHz), and is given in matrix-form $ (p_t)_{ij} = (K p_0) (r_i,t_j)$. To avoid an inverse crime, a coarser reconstruction grid is used, and the data is interpolated to fit the subsequent Nyquist frequency. Then, $5\%$ relative random noise is added to the data. The adjoint operator $K^*$ is defined via the $L^2$ inner product relation, and in the present setting, can be interpreted as a time-reversal process: Given measured data $p(i,t)$ at the sensor locations $\{r_i\}_{i=1}^M$, one defines a wave field $q(r,t)$ as the solution of the wave equation
    \begin{equation*}
        \left( \frac{1}{c^2(s)} \frac{\partial^2}{\partial t^2} - \Delta_s \right) q(s,t) = 0 \,,
    \end{equation*}
supplemented with final conditions $q(s,T) = 0$ and $\partial_t \, q(s,T) = 0$. In the case of point-like sensors, this can be modeled by imposing time-reversed source terms at the detector locations. With this, the action of the adjoint operator is then given by evaluating the resulting field at the initial time, i.e.,
    \begin{equation*}
        K^* g = q(s,0) \,,
    \end{equation*}
which can be interpreted as backprojecting the measured signals into the domain.  In practice, the adjoint $K^*$ is implemented numerically using the same wave propagation solvers as for the forward problem. In the \texttt{k-Wave} toolbox, this can be achieved by a time-reversal procedure with time-reversed sensor data as the source, and simulating the wave propagation backward in time. This yields a discrete approximation of $K^*$ which is (numerically) consistent with the approximate forward operator $K$ \cite{Arridge_2016}.

Note that in our discrete setting of \eqref{EqLSProbl}, this means that for the matrix-vector products $Ax$ and $A^T b$, we use the matrix-free evaluations of $K$ and $K^*$ described above. Computing a matrix representation for $K$ is only feasible for coarse discretizations.

\subsection{Implementation and computational aspects}\label{subsect_implementation}

In all of our numerical experiments, the vector $x\in\R^n$ corresponds to a 2D-image with $n_{\row}\times n_{\col}$ pixels, i.e., $n=n_{\row}n_{\col}$. Since we are interested in TV regularization, $Bx$ should approximate the gradient of this image. For this, we identify $\R^n$ with $\R^{n_{\row}\times n_{\col}}$, and define the matrix $B$ of size $\big(n_{\row}(n_{\col}-1)+ (n_{\row}-1)n_{\col}\big)\times n$ such that the vector $Bx$ has the elements
    \begin{equation*}
        x_{i,j+1}-x_{i,j} \,,
        \qquad 
        \forall \, i=1\,,\dots\,, n_{\row}\,, \quad \forall \, j=1\,,\ldots\,,n_{\col}-1 \,,
    \end{equation*}
and
    \begin{equation*}
        x_{i+1,j}-x_{i,j} \,, 
        \qquad \forall \, i=1 \,,\dots \,, n_{\row}-1 \,, \quad \forall \, j=1\,,\dots \,, n_{\col} \,.
    \end{equation*}
    
Concerning the initialization of Algorithm~\ref{AlgALM}, note that we choose
    \begin{equation*}
        \ee \sigma0=10\frac{\lambda_{max}(A^TA)}{\lambda_{max}(B^TB)} \,,
    \end{equation*}
where $\lambda_{max}(C)$ denotes the largest eigenvalue of the matrix $C$. Then, given the starting values $\ee x0$ and $\ee{\zeta^*}0$, according to \eqref{EqArgMinLag} we compute
    \begin{equation*}
        \ee z0=\argmin_z\lag_{\ee\sigma0}(\ee x0,z,\ee{\zeta^*}0)=\Prox{\alpha/\ee\sigma0}\Big(B\ee x0+\frac {\ee{\zeta^*}0}{\ee\sigma0}\Big)
    \end{equation*}
and use this result to recompute
    \begin{equation*}
        \ee{\zeta^*}0\leftarrow\ee{\zeta^*}0+\ee\sigma0(B\ee x0-\ee z0)\in \partial\alpha\norm{\cdot}_1(\ee z0) \,.
    \end{equation*}
    
Concerning the stopping of Algorithm~\ref{AlgALM}, note that at the start of each iteration $k$, the fulfillment of the first-order optimality conditions \eqref{EqFO_ExtProbl} is measured by
    \begin{equation}\label{Eq_rk}
        \ee rk :=\Big(\norm{A^T(A\ee xk-b^\delta)+B^T\ee{\zeta^*}k}^2+\gamma_{\op{scale}}^2\norm{B\ee xk-\ee zk}^2\Big)^{1/2} \,,
    \end{equation}
where the scaling factor
    \begin{equation}\label{EqScalingFactor}
        \gamma_{\op{scale}}:=\lambda_{\max}(A^TA)/\sqrt{\lambda_{\max}(B^TB)} 
    \end{equation}
ensures that the quotient $\ee rk/\ee r0$ is independent of transformations of the form 
    \begin{equation*}
        (x,A,B)\to(x/\eta,\eta A,\eta B)\,,
        \qquad \text{and} \qquad
        (B,\alpha)\to (\eta B,\alpha/\eta) \,.
    \end{equation*}
with positive scalar $\eta$. For a given tolerance $\eps_{\op{Opt}}$, we then stop Algorithm~\ref{AlgALM} if
    \begin{equation}\label{EqTermininateALM}
        \ee rk\leq\eps_{\op{Opt}}\ee r0 \,.
    \end{equation}
Finally, for the sequence $\gamma_l$ in Algorithm~\ref{AlgALM} we chose $\gamma_l=5/(5+l)$, and we set $\beta=0.5$.

For the computation of the iterates $(\ee x{k+1},\ee z{k+1})$ according to \eqref{EqApprMinLaq} in Algorithm~\ref{AlgALM}, we use Algorithm~\ref{AlgSubProbl} with the stopping tolerance
    \begin{equation*}
        \eps=\ee \eps k :=\min\Kl{2^{-(k+1)}\norm{\nabla_x\lag_{\ee\sigma0}(\ee x0,\ee z0,\ee{\zeta^*}0)},\ 0.1\norm{\nabla_x\lag_{\ee\sigma k}(\ee xk,\ee zk,\ee{\zeta^*}k)}} \,.
    \end{equation*}
The tolerances in Steps~1 and 2 of Algorithm~\ref{AlgSubProbl} are set to $\ee{\eps_A}j = \ee{\eps_N}j = 0.1$, and for the remaining parameters we use $\nu=0.1$, $\xi_1=-1.2$, $\bar\xi_1=4$, $\xi_2=-0.8$, and $\bar\xi_2=0.25$.

As noted above, we compare our method with the Chambolle-Pock method \eqref{Chambolle_Pock}, which in our setting \eqref{EqLSProbl} takes the form as given in Algorithm~\ref{AlgChPo}.

\begin{algorithm}[Chambolle-Pock algorithm]\label{AlgChPo}\mbox{ }\\
Choose $\tau,\sigma>0$, $\theta\in[0,1]$, $(\ee x0\, \ee {z^*}0)\in\R^n\times\R^l$ and set $\ee {\bar x}0=\ee x0$.
\\ \vspace{-8pt} \\ \noindent
For $k=0,1,\ldots$, perform the following calculations
    \begin{align}
        \label{EqChPoSt1}&\ee {z^*}{k+1} = {\rm prox}_{\delta_{\alpha\B_{\infty}}}(\ee {z^*}k+\sigma B\ee {\bar x}k) \,,
        \\
        &\label{EqChPoSt1a}\ee zk = \frac{\ee{z^*}k-\ee{z^*}{k+1}}\sigma + B\ee{\bar x}k \,,
        \\
        &\label{EqChPoSt1b} \ee rk =\Big(\norm{A^T(A\ee xk-b^\delta)+B^T\ee{z^*}{k+1}}^2+\gamma_{scale}^2\norm{B\ee xk-\ee zk}^2\Big)^{1/2}
        \\
        \label{EqChPoSt2}&\ee  x{k+1} = \ee xk -\tau(I+\tau A^TA)^{-1}\big(A^T(A\ee xk-b^\delta)+B^T\ee {z^*}{k+1}\big)\,,
        \\
        \label{EqChPoSt3}&\ee {\bar x}{k+1} = \ee  x{k+1} +\theta(\ee  x{k+1}-\ee xk) \,.
    \end{align}
\end{algorithm}

In Algorithm~\ref{AlgChPo}, the scaling parameter $\gamma_{\op{scale}}$ is chosen as in \eqref{EqScalingFactor}, and ${\rm prox}_{\delta_{\alpha\B_{\infty}}}$ denotes the proximal mapping of the indicator function of the $\ell_\infty$-ball in $\R^l$ with radius $\alpha$, i.e., the $i$-th component of $\ee {z^*}{k+1}$ in \eqref{EqChPoSt1} is given by
    \begin{equation*}
        \ee{z^*_i}{k+1}=\begin{cases}
        \alpha \,, &\text{if} \,\, \ee {z^*}k+\sigma B\ee {\bar x}k)_i>\alpha \,,
        \\
        (\ee {z^*}k+\sigma B\ee {\bar x}k)_i \,, &\text{if} \,\, \vert(\ee {z^*}k+\sigma B\ee {\bar x}k)_i\vert\leq\alpha \,,
        \\
        -\alpha \,, &\text{if} \,\, (\ee {z^*}k+\sigma B\ee {\bar x}k)_i<-\alpha \,.
    \end{cases}
    \end{equation*}
By the definition of the proximal mapping, it follows that 
    \begin{equation*}
        0\in \ee{z^*}{k+1}-\ee {z^*}k-\sigma B\ee {\bar x}k+\partial \delta_{\alpha\B_{\infty}}(\ee{z^*}{k+1}) \,,
    \end{equation*} 
and, since $\delta_{\alpha\B_{\infty}}$ is the conjugate function to $\alpha\norm{\cdot}_1$, we obtain that
    \begin{equation*}
        \ee {z^*}{k+1}\in \partial \alpha\norm{\ee {z^*}k-\ee{z^*}{k+1}+\sigma B\ee {\bar x}k}_1 =\partial \alpha\norm{\ee zk}_1 \,,
    \end{equation*}
where we have used that $\partial\alpha\norm{\sigma \ee zk}_1=\partial\alpha\norm{\ee zk}_1$ due to the special form of the subdifferential of the $\ell_1$-norm. Hence, $\ee rk$ in \eqref{EqChPoSt1b} measures the violation of the first-order optimality conditions at $(\ee xk,\ee zk,\ee {z^*}{k+1})$ in the same way as in \eqref{Eq_rk}.

Concerning the stopping of Algorithm~\ref{AlgChPo}, we terminate the iteration as soon as
    \begin{equation}\label{EqTermininateChPo}
        \ee rk\leq\eps_{\op{Opt}}^{\op{ChPo}}\ee r0 \,,
    \end{equation}
for some given tolerance $\eps_{\op{Opt}}^{\op{ChPo}}$. The parameters are chosen as $\theta=1$, $\tau= 4/\lambda_{max}(A^TA)$, and $\sigma=1/(\tau\lambda_{max}(B^TB))$. The iterate $\ee x{k+1}$ in \eqref{EqChPoSt2} is computed as $\ee x{k+1}=\ee xk-\tau\triangle x$, where $\triangle x$ is computed by the CG method as an approximate solution of 
    \begin{equation*}
        (I+\tau A^TA)\triangle x=A^T(A\ee xk-b^\delta)+B^T\ee {z^*}{k+1} \,,
    \end{equation*}
satisfying
    \begin{small}
    \begin{equation*}
        \norm{(I+\tau A^TA)\triangle x-\big(A^T(A\ee xk-b^\delta)+B^T\ee {z^*}{k+1}\big)}\leq 10^{-3}\norm{A^T(A\ee xk-b^\delta)+B^T\ee {z^*}{k+1}} \,. 
    \end{equation*}
    \end{small}

In addition to the Chambolle-Pock method, we also compare our approach to the ADMM method \cite{BoydADMM}. In our setting \eqref{EqLSProbl}, this method takes the form of Algorithm~\ref{AlgADMM}.

\begin{algorithm}[ADMM]\label{AlgADMM}\mbox{ }\\
Choose $\sigma>0$ and staring point $(\ee x0,\ee{z^*}0)\in\R^n\times\R^l$. Compute
    \begin{equation*}
    \begin{split}
        \ee z0 &= \argmin_z \lag_\sigma(\ee x0,z,\ \ee{z^*}0) = \Prox{\alpha/\sigma}\kl{B\ee x0 +\frac{\ee{z^*}0}\sigma} \,,
        \\ 
        \ee{z^*}0 &= \ee{z^*}0+\sigma(B\ee x0-\ee z0) \,.
    \end{split}
    \end{equation*}
For $k=0,1,\ldots$, perform the following calculations
    \begin{align}
        \nonumber  &\ee rk=\Big(\norm{A^T(A\ee xk-b^\delta)+B^T\ee{z^*}k}^2+\gamma_{scale}^2\norm{B\ee xk-\ee zk}^2\Big)^{1/2} \,,
        \\
        \label{EqADMM_x_k+1}  &\ee x{k+1} =\argmin_x \lag_\sigma(x,\ee zk,\ee{z^*}k) \,,
        \\
        \nonumber &\ee z{k+1} = \argmin_z \lag_\sigma(\ee xk,z,\ee{z^*}k) = \Prox{\alpha/\sigma}\kl{B\ee xk +\frac{\ee{z^*}k}\sigma } \,,
        \\
        \nonumber &\ee {z^*}{k+1}= \ee{z^*}k+\sigma(B\ee x{k+1}-\ee z{k+1}) \,.
    \end{align}
\end{algorithm}

Concerning the stopping of Algorithm~\ref{AlgADMM}, we terminate the iteration as soon as
    \begin{equation}\label{EqTermininateADMM}
        \ee rk\leq\eps_{\op{Opt}}^{\op{ADMM}}\ee r0\,,
    \end{equation}
for some given tolerance $\eps_{\op{Opt}}^{\op{ADMM}}$. The parameter $\sigma$ is chosen as 
    \begin{equation*}
        \sigma=0.25\lambda_{max}(A^TA)/\lambda_{max}(B^TB) \,,
    \end{equation*}
and the new iterate $\ee x{k+1}$ in \eqref{EqADMM_x_k+1} is calculated by $\ee x{k+1}=\ee xk+\triangle x$, where $\triangle x$ is computed by the CG method as an approximate solution of the linear system
    \begin{equation*}
        (A^TA+\sigma B^TB)\triangle x=-\nabla_x\lag_\sigma(\ee xk,\ee zk,\ee {z^*}k) \,,
    \end{equation*}
satisfying
    \begin{equation*}
        \norm{(A^TA+\sigma B^TB)\triangle x+\nabla_x\lag_\sigma(\ee xk,\ee zk,\ee {z^*}k)}\leq 10^{-3}\norm{\nabla_x\lag_\sigma(\ee xk,\ee zk,\ee {z^*}k)} \,.
    \end{equation*}
    
Note that the proper choice of the parameters $\theta,\tau,\sigma$ in Algorithm~\ref{AlgChPo}, and of $\sigma$ in Algorithm~\ref{AlgADMM} is crucial for the speed of convergence of these two algorithms. We used some manual tuning yielding good rates of convergence for all considered test settings.

Concerning the regularization parameter $\alpha$ in \eqref{EqLSProbl}, we used the following consideration: A solution $(\xad)$ of \eqref{EqLSProbl} satisfies the first-order optimality condition
    \begin{equation*}
        0\in A^T(A \xad- b^\delta)+\alpha B^T z_\alpha^* \,, \qquad  z_\alpha^*\in\partial \norm{B \xad}_1 \,,
    \end{equation*}
implying
    \begin{equation}\label{EqAlpha}
        \alpha = \frac{\norm{A^T(A \xad -b^\delta)}}{\norm{B^Tz_\alpha^*}} \,.
    \end{equation}
Now note that according to the discrepancy principle, $\alpha$ should be chosen such that $\norm{A \xad- b^\delta}\approx\delta$, and that under the assumption that the noise $b-b^\delta$ is normally distributed, we may estimate the numerator in \eqref{EqAlpha} by
    \begin{equation*}
        \sum_{i=1}^t\delta\frac{\norm{A^T b_i}}{t\norm{b_i}} \,,
    \end{equation*}
where $b_i$, $i=1\,,\ldots\,,t$ denote random vectors with ${\cal N}(0,1)$ distributed elements. Now in order to estimate the denominator in \eqref{EqAlpha}, we utilize the facts that $z_\alpha^*$ is contained in the $l_\infty$ unit ball, and that $\norm{z_\alpha^*}_\infty=1$ provided $B\xad \neq 0$, and we replace $\norm{B^Tz_\alpha^*}$ by
    \begin{equation*}
        \sum_{i=1}^t\frac{\norm{B^Tz_i^*}}{t\norm{z_i^*}_\infty} \,, 
    \end{equation*}
where $z_i^*$, $i=1 \,, \ldots \,, d$ are random vectors with elements uniformly drawn from $[-0.5,0.5]$. This approach yields the following formula for the regularization parameter $\alpha$: 
    \begin{equation}\label{EqAlphaOpt}
        \alpha = \delta\kl{\sum_{i=1}^t\frac{\norm{A^T b_i}}{\norm{b_i}}} \Big/\kl{\sum_{i=1}^t\frac{\norm{B^Tz_i^*}}{\norm{z_i^*}_\infty}}\,.
    \end{equation}
In our numerical experiments presented below, we used the sample size $t=10$.

Concerning the computational environment, we have implemented all algorithms in Matlab, and run them on a single core (Intel Haswell CPU, Xeon E5-2630v3, 2.4Ghz) of the HPC Cluster RADON~1 (see \url{https://www.oeaw.ac.at/ricam/hpc}).

Finally, note that in most of the numerical experiments conducted below, it appears that the solution $(\xb,\zb)$ of \eqref{EqExtProblem} is unique, but that the problem is dually degenerated in the sense that there are multiple subgradients $z^*$ fulfilling the first-order optimality conditions \eqref{EqFO_ExtProbl}. Now let $(\xb,\zb)$ be a solution of \eqref{EqExtProblem} and consider the undirected graph $G=(V,E)$ with vertices $(i,j)$, $i=1,\ldots,n_{\row}$, $j=1,\ldots,n_{\col}$, and edges
    \begin{equation*}
        E=\Kl{[(i,j),(i+1,j)]\mv \xb_{i+1,j}-\xb_{i,j}=0}
        \cup
        \Kl{[(i,j),(i,j+1)]\mv \xb_{i,j+1}-\xb_{i,j}=0} \,,
    \end{equation*}
where we have identified $\xb$ with an element of $\R^{n_{\row}\times n_{\col}}$. Hence, the edges of the graph correspond to the rows of the matrix $B$ where $\zb=B\xb$ is zero. Now as soon as a cycle 
    \begin{equation*}
        \big((i_1,j_1),(i_2,j_2),\ldots,(i_{k+1},j_{k+1})=(i_1,j_1)\big)
    \end{equation*}
in $G$ exists, the corresponding rows of $B$ are linearly dependent: Taking the weight of the row of $B$ related to the edge $[(i_t,j_t),(i_{t+1},j_{t+1}]$, $t=1,\ldots,k$ to be $+1$ if $i_{t+1}\geq i_t$ and $j_{t+1}\geq j_t$, and $-1$ otherwise, it is easy to see that the weighted sum of the rows of $B$ is zero. Hence, it follows that in this case, whenever a subgradient $\bar \zb^*\in\ri \partial\alpha\norm{\zb}$ fulfilling \eqref{EqFO_ExtProbl} exists, then $S_{\rm FO}$ is not a singleton. Note that it is quite likely that a cycle in $G$ exists, e.g., when four pixels $\xb_{i,j},\xb_{i+1,j},\xb_{i+1,j+1},\xb_{i,j+1}$ forming a square have the same value. Further, for almost all $(z,z^*)\in\gph \partial\norm{\cdot}_1\subset \R^l\times\R^l$ (wrt.\ the $l$-dimensional Hausdorff measure), we have $z^*\in\ri\partial\norm{z}_1$ and therefore, we expect that problem \eqref{EqExtProblem} is dually degenerated.

\subsection{Numerical results for setting I: X-ray CT}

We now present numerical results in the CT test setting described in Section~\ref{subsect_setting_CT}. First, note that the noise level $\delta$ in \eqref{EqAlphaOpt} was estimated by $\delta = 0.03$, and that we always used the initial guess $\ee x0=0$ and $\ee{z^*}0=0$. In the termination criteria \eqref{EqTermininateALM},\eqref{EqTermininateChPo}, and \eqref{EqTermininateADMM}, we chose the tolerances $\eps_{\op{opt}}=10^{-9}$ and $\eps_{\op{Opt}}^{\op{ChPo}}=\eps_{\op{Opt}}^{\op{ADMM}}=10^{-6}$, where the latter value corresponds to approximately 2 significant digits.

\begin{figure}[ht!]
    \centering
    \includegraphics[width=\textwidth]{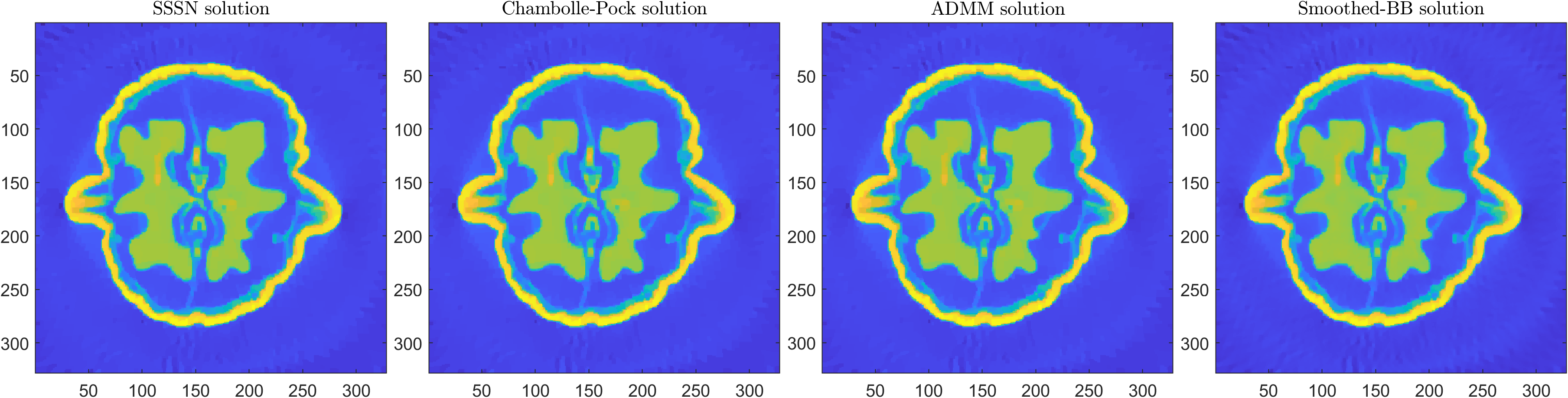}
    \caption{Test setting I (CT): Comparison of reconstructions for a representative test configuration ($328 \times 328$ pixels, $120$ projections, $100\%$ angle). Here, ``SSSN'' stands for our \ssstar Newton approach, i.e., Algorithm~\ref{AlgALM}, while ``Smoothed-BB'' stands for the smoothed Barzilai-Borwein approach (with $\eps = 10^{-8}$) defined in \eqref{method_BB}.}
    \label{fig_Walnut_Reconstructions}
\end{figure}

\begin{figure}[ht!]
    \centering
    \includegraphics[width=\textwidth]{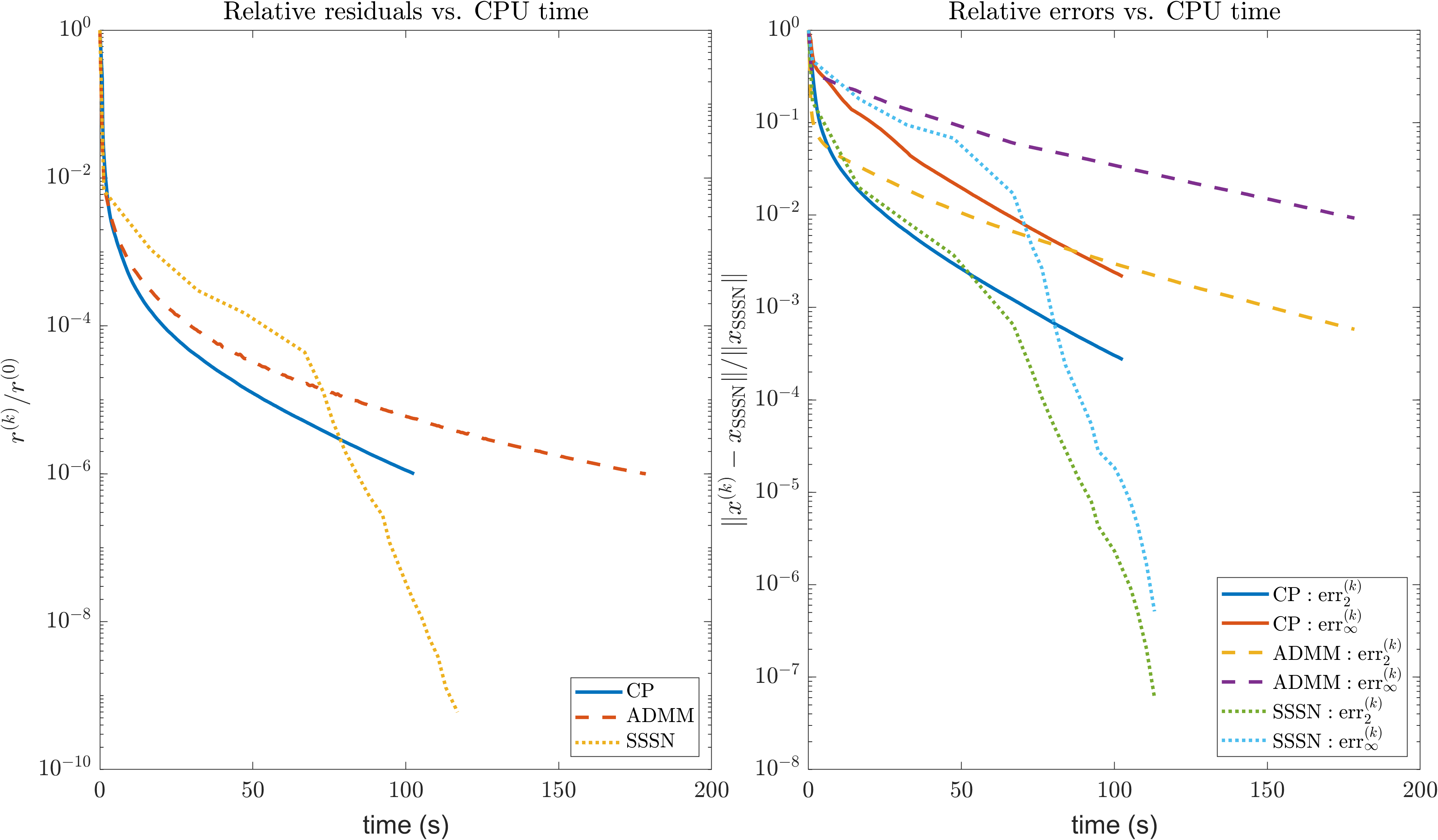}
    \caption{Test setting I (CT): Comparison of relative residuals and relative errors, both vs.\ CPU time, for a representative test configuration ($328 \times 328$ pixels, $120$ projections, $100\%$ angle). Here, ``SSSN'' stands for our \ssstar Newton approach, i.e., Algorithm~\ref{AlgALM}, and ``CP'' stands for the Chambolle-Pock method, i.e., Algorithm~\ref{AlgChPo}.}
    \label{fig_Walnut_Residuals}
\end{figure}

In all considered test configurations (i.e., different spatial resolutions, numbers of parallel rays, limited-angle settings), the CPU times for the proposed Algorithm~\eqref{AlgALM} (with stopping tolerance $10^{-9}$) and Algorithm~\ref{AlgChPo} (Chambolle-Pock, with tolerance $10^{-6}$) were approximately the same, whereas Algorithm~\ref{AlgADMM} (ADMM, also with tolerance $10^{-6}$)  required  considerably more time.
Within the first iterations, both Algorithm~\ref{AlgChPo} and Algorithm~\ref{AlgADMM} reduce the residual $\ee rk$ faster than Algorithm~\ref{AlgALM}, but then slow down and are outperformed by Algorithm~\ref{AlgALM}. The performance of the first-order comparison method \eqref{method_BB} based on the smooth approximation \eqref{approx} of the TV penalty term is discussed separately below. Now, since the residual $\ee rk$ only has limited significance, we also considered the relative distance to the solution $\xb_{\op{ALM}}$ found by Algorithm~\ref{AlgALM} with stopping tolerance $\eps_{\op{Opt}}=10^{-9}$, measured by
    \begin{equation*}
        \ee {\rm err}k_2:=\frac{\norm{\ee xk-\xb_{\op{ALM}}}_2}{\norm{\xb_{\op{ALM}}}_2} \,,
        \qquad
        \text{and}
        \qquad 
        \ee {\rm err}k_\infty := \frac{\norm{\ee xk-\xb_{\op{ALM}}}_\infty}{\norm{\xb_{\op{ALM}}}_\infty} \,.
    \end{equation*}
These errors are more suitable convergence measures for the considered algorithms than the distance to the ground-truth solution, which is both unknown and not necessarily even the minimizer of \eqref{EqLSProbl}, while $\xb_{\op{ALM}}$ is a good approximation (up to $\eps_{\op{Opt}}=10^{-9}$).

Figure~\ref{fig_Walnut_Reconstructions} presents the reconstructions obtained with the different algorithms for a representative test configuration ($328\times 328$ pixels, $120$ projections, $100\%$ angle), while Figure~\ref{fig_Walnut_Residuals} depicts the evolution, over computation time, of the corresponding relative residual $\ee r{k}/\ee r0$ and the relative errors $\ee {\rm err}{k}_2$ and $\ee {\rm err}{k}_\infty$. The results for the other test settings, which are generally very similar, are given in Appendix~\ref{appendix}. Note that the comparison over computation time as opposed to over iteration number was chosen to allow for an objective comparison of the considered algorithms. Throughout all tests, it appears that during the first iterations, $\ee {\rm err}k_2$ decreases equally fast for Algorithm~\ref{AlgALM} and Algorithm~\ref{AlgChPo}, but again with increasing CPU time Algorithm~\ref{AlgALM} excels Algorithm~\ref{AlgChPo}. We can see that, although the residuals $\ee rk$ differs by a factor of magnitude $10$, the distance to the solution is nearly the same. Furthermore, as before, ADMM performs worse than the other two methods wrt.\ the distance to the solution.

Concerning the performance of the first-order comparison method \eqref{method_BB}, we note the following: First, we found that the choice of the smoothing parameter $\eps$ used in the approximation of the TV penalty strongly affected both the stability, efficiency, and accuracy of the method. In particular, with a large $\eps$, the method does indeed converge very quickly (thanks to the Barzilai-Borwein stepsize), in some cases even faster than our second-order approach, but then also results in very large errors. E.g., in Figure~\ref{fig_Walnut_Reconstructions}, although the depicted reconstruction obtained with \eqref{method_BB} looks good visually, we have $\ee rk = 0.0358$, $\ee {\rm err}k_2 = 0.011$, and $\ee {\rm err}k_\infty = 0.0272$, which is orders of magnitude above the values obtained with the other reconstruction algorithms. On the other hand, for small smoothing parameters $\eps$, method \eqref{method_BB} typically becomes instable and fails to converge. This issue is further compounded by the difficulty to determine an optimal smoothing parameter $\eps$, which in our tests appears to depend both on the resolution, number of projections, and limited-angle setting. These observations are consistent throughout all our tests, and in a sense are to be expected, given that \eqref{method_BB} solves only a smooth approximation of the actual, non-smooth minimization problem \eqref{EqLSProbl}. Due to these issues, we ultimately decided not to include \eqref{method_BB} in our residual/error plots.

\begin{table}[ht!]
\centering
\begin{tabular}{|c|c|c|c|c|c|c|c|c|}
\hline
$k$&$\ee\sigma k$&$\ee r{k+1}/\ee r0$&$\ee {\rm err}{k+1}_2$&$\ee {\rm err}{k+1}_\infty$&time&it&cg it&$\vert I(\ee z{k+1})\vert$\cr
\hline
0&7.8e+02&6.8e-03&1.7e-01&4.7e-01&1.29&1&9&168609\cr
1&7.8e+02&1.1e-03&2.0e-02&1.8e-01&15.26&20&392&106697\cr
2&1.4e+03&3.0e-04&8.4e-03&9.4e-02&15.54&24&369&94737\cr
3&1.4e+03&1.5e-04&3.7e-03&6.8e-02&15.20&25&360&79189\cr
4&1.4e+03&4.4e-05&6.5e-04&1.7e-02&19.68&30&460&65314\cr
5&1.4e+03&1.3e-05&2.1e-04&4.7e-03&6.03&10&142&56628\cr
6&1.4e+03&4.9e-06&1.1e-04&2.7e-03&3.29&5&80&54842\cr
7&1.4e+03&1.9e-06&5.2e-05&6.7e-04&4.03&6&98&52769\cr
8&2.5e+03&1.0e-06&2.9e-05&2.4e-04&3.64&5&86&52480\cr
9&4.0e+03&4.8e-07&1.4e-05&1.1e-04&4.30&6&101&52021\cr
10&4.0e+03&2.7e-07&7.9e-06&5.4e-05&4.23&6&104&51634\cr
11&6.2e+03&1.2e-07&4.3e-06&2.8e-05&2.21&3&51&51458\cr
12&6.2e+03&3.3e-08&2.3e-06&1.8e-05&5.36&8&122&51046\cr
13&6.2e+03&2.0e-08&1.6e-06&1.3e-05&2.25&3&55&51044\cr
14&9.3e+03&1.1e-08&9.3e-07&7.9e-06&2.94&4&65&50996\cr
15&1.4e+04&6.0e-09&5.0e-07&4.3e-06&2.41&3&53&50915\cr
16&1.9e+04&3.4e-09&2.0e-07&1.7e-06&2.84&4&58&50812\cr
17&1.9e+04&1.3e-09&6.0e-08&5.1e-07&2.62&3&65&50742\cr
18&1.9e+04&6.0e-10&-&-&3.81&5&82&50649\cr
\hline
\end{tabular}
\caption{Test setting II (CT): Performance metrics of Algorithm~\ref{AlgALM} for a representative test configuration ($328 \times 328$ pixels, $120$ projections, $100\%$ angle). Here, {\em time} is the CPU time required by Algorithm~\ref{AlgSubProbl} to approximately solve \eqref{EqApprMinLaq}, {\em it} is the number of iterations performed in each call of  Algorithm~\ref{AlgSubProbl}, and {\em cg it} is the total number of CG iterations required in Steps~1 and 2 of Algorithm~\ref{AlgSubProbl}. Furthermore, $\vert I(\ee z{k+1}) \vert$ is the cardinality of the index set $I(\ee z{k+1})$, i.e., the number of nonzero components~of~$\ee z{k+1}$.}
\label{TabWalnut328_120}
\end{table}

In Table~\ref{TabWalnut328_120}, we provide detailed information about the individual iterations performed by Algorithm~\ref{AlgALM} for the same test configuration considered above (i.e., $328 \times 328$ pixels, $120$ projections, $100\%$ angle). We can see that most of the computation time is required in the first iterations of Algorithm~\ref{AlgALM}, while for later iterations, the numerical effort for evaluating \eqref{EqApprMinLaq} is much smaller. Furthermore, from the values of $\vert I(\ee z{k+1}) \vert$ we can see that for the last iterations, when we are already close to a solution, there is still a considerable change in these index sets. We conjecture that this phenomenon is due to the dual degeneracy of the problem; cf.~the last paragraph of Section~\ref{subsect_implementation}.

Finally, note that throughout the different test settings, the iteration numbers stay reasonably constant, indicating that our Algorithm~\ref{AlgALM} is in fact discretization invariant.

\begin{remark}
When omitting Step~1 in Algorithm~\ref{AlgSubProbl}, i.e., setting $\ee{\hat x}j = \ee xj$ and fixing the regularization parameter $\ee \rho j=0$, Algorithm~\ref{AlgSubProbl} can be considered as a standard inexact semismooth Newton method for minimizing the function $\vartheta$ given by \eqref{EqTheta}. However, this method appears to be inefficient and requires more than four times more computation time than our implementation. Furthermore, it has big troubles to come close to a solution of the problem by computing too large search directions $\ee{\triangle}j$, which result in small step sizes during the line search.
\end{remark}

\subsection{Numerical results for setting II: PAT}

Next, we present the numerical results in the PAT test setting described in Section~\ref{subsect_setting_PAT}. Note first that in our implementation, we have two separate routines for evaluating $Ax$ and $\tilde A^*y$, where $\tilde A^*$ is only an approximation for $A^*$. In fact, for random $x$ and $b$,
    \begin{equation*}
        \abs{\skalp{Ax,b}-\skalp{\tilde A^*b,x}}/\abs{\norm{Ax}\norm{b}} \approx 10^{-8} \,,
    \end{equation*}
which is too inaccurate to allow for high-precision computations. Hence, when we used the stopping criterion \eqref{EqTermininateALM} with tolerance $\eps_{\op{Opt}}<10^{-5}$, in some iterations we observed difficulties during the line search in the proposed Algorithm~\eqref{AlgALM}, impeding convergence to higher levels of accuracy. To overcome this problem, we computed in advance, in a very time consuming process, for each of the three  sensor arrangements the matrix $A$  for the low resolution of $128\times 128$ pixels, and ran the three Algorithms~\ref{AlgALM}, \ref{AlgChPo}, and \ref{AlgADMM} with stopping tolerances $\eps_{\op{Opt}}=10^{-6}$ and $\eps_{\op{Opt}}^{\op{ChPo}}=\eps_{\op{Opt}}^{\op{ADMM}}=10^{-3}$, respectively. Figure~\ref{fig_PAT_Reconstructions} presents the reconstructions obtained in a representative test configuration (two-sided), while Figure~\ref{fig_PAT_Residuals} depicts the evolution, over computation time, of the corresponding relative residual $\ee r{k}/\ee r0$ and the relative errors $\ee {\rm err}{k}_2$ and $\ee {\rm err}{k}_\infty$. The results for the other test settings are given in Appendix~\ref{appendix}.

\begin{figure}[ht!]
    \centering
    \includegraphics[width=\textwidth]{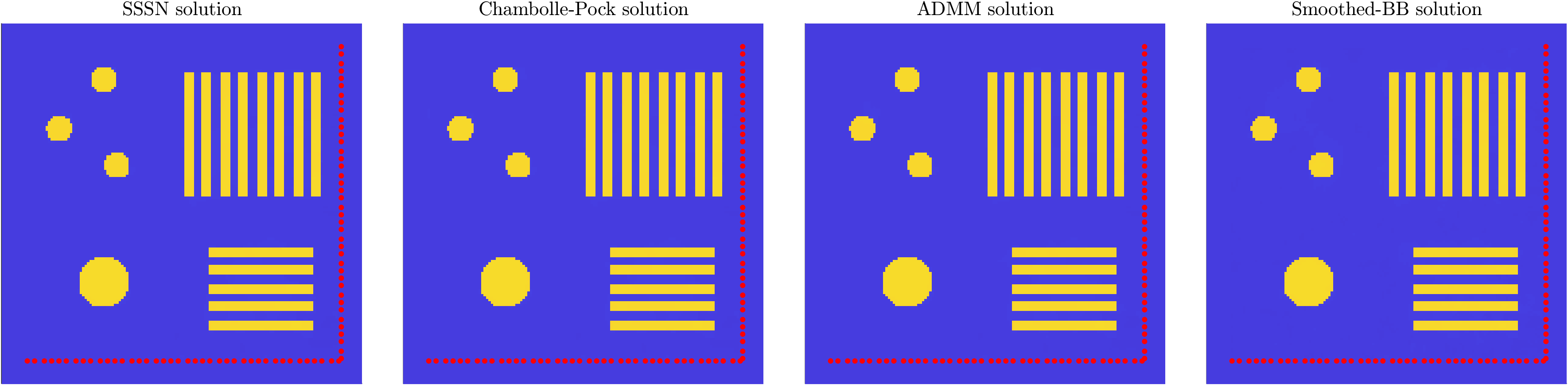}
    \caption{Test setting II (PAT): Comparison of reconstructions for a representative test configuration ($128 \times 128$ pixels, two-sided sensor layout). Here, ``SSSN'' stands for our \ssstar Newton approach, i.e., Algorithm~\ref{AlgALM}, while ``Smoothed-BB'' stands for the smoothed Barzilai-Borwein approach (with $\eps = 10^{-6}$) defined in \eqref{method_BB}.}
    \label{fig_PAT_Reconstructions}
\end{figure}

\begin{figure}[ht!]
    \centering
    \includegraphics[width=\textwidth]{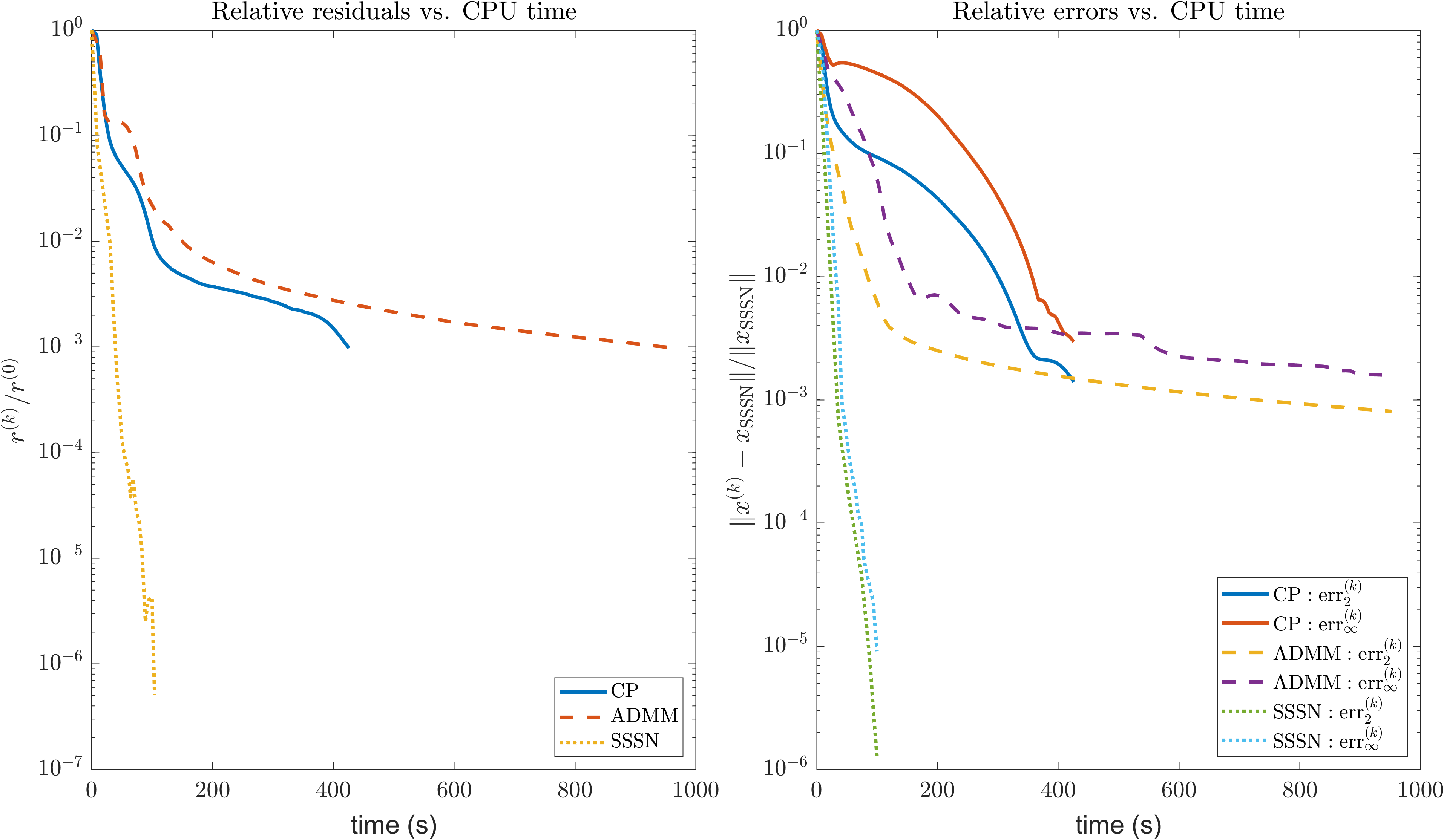}
    \caption{Test setting II (PAT): Comparison of relative residuals and relative errors, both vs.\ CPU time, for a representative test configuration ($128 \times 128$ pixels, two-sided sensor layout). Here, ``SSSN'' stands for our \ssstar Newton approach, i.e., Algorithm~\ref{AlgALM}, and ``CP'' stands for the Chambolle-Pock method, i.e., Algorithm~\ref{AlgChPo}. .}
    \label{fig_PAT_Residuals}
\end{figure}

\begin{figure}[ht!]
    \centering
    \includegraphics[width=\textwidth]{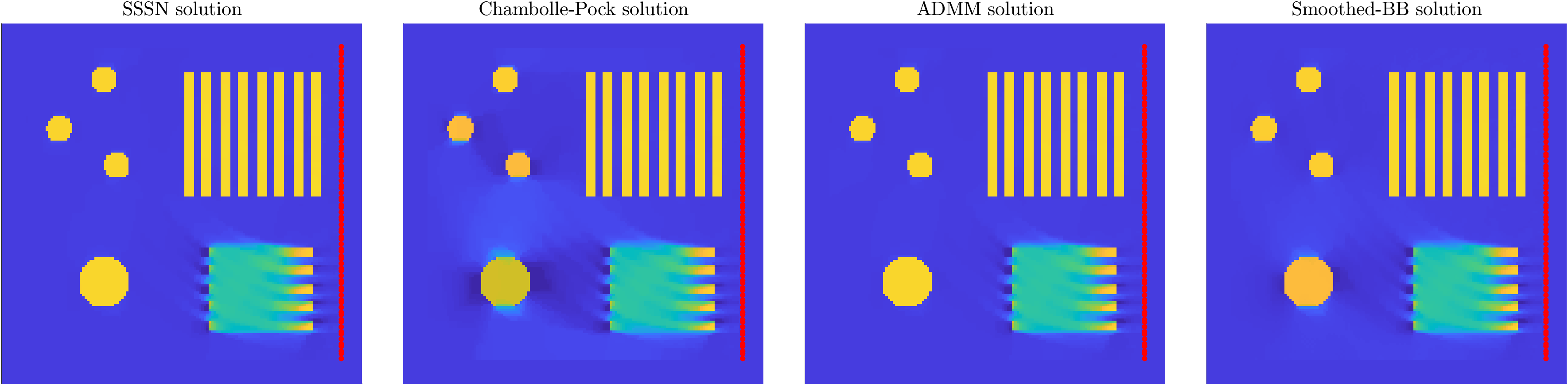}
    \caption{Test setting II (PAT): Comparison of reconstructions for a different test configuration ($128 \times 128$ pixels, one-sided sensor layout). Here, ``SSSN'' stands for our \ssstar Newton approach, i.e., Algorithm~\ref{AlgALM}, while ``Smoothed-BB'' stands for the smoothed Barzilai-Borwein approach (with $\eps = 10^{-6}$) defined in \eqref{method_BB}.}
    \label{fig_PAT_onesided}
\end{figure}

It appears that in the one-sided sensor case, the solution is not unique, and thus the reconstructions contain artifacts; see Figure~\ref{fig_PAT_onesided}. Furthermore, in all PAT tests, Algorithm~\ref{AlgALM} appears to be superior to the other two methods throughout the iteration: Algorithm~\ref{AlgALM} reduces the residual $\ee rk$ much faster than the Chambolle-Pock and ADMM method, respectively, and also the deviation to the solution (in case when it is unique) decreases much faster. Moreover, we observed that a stopping tolerance of $\eps_{\op{Opt}}=10^{-4}$ resulted in a relative error of less then $10^{-3}$ with respect to both the $\ell_2$ norm and the $\ell_\infty$ norm. Hence, in a separate test series with a higher resolution of $512 \times 512$ pixels, where we (are forced to) use the matrix-free implementation of $A$, we decided to run Algorithm~\ref{AlgALM} with this stopping tolerance. Figure~\ref{fig_PAT_High_Res} presents both the obtained reconstruction and the corresponding relative residuals and errors, in one of these high-resolution cases (two-sided). Note that due to the large-scale nature of the PAT problem at this resolution, only Algorithm~\ref{AlgALM} was able to converge to a solution within an acceptable time-frame, and thus only those results are presented in the figure.  

Again, note that throughout the different test settings, the iteration numbers stay reasonably constant, indicating that our Algorithm~\ref{AlgALM} is in fact discretization invariant.

\begin{figure}[ht!]
    \centering
    \includegraphics[width=\textwidth, trim = {1cm 4.2cm 0cm 4.3cm}, clip=true]{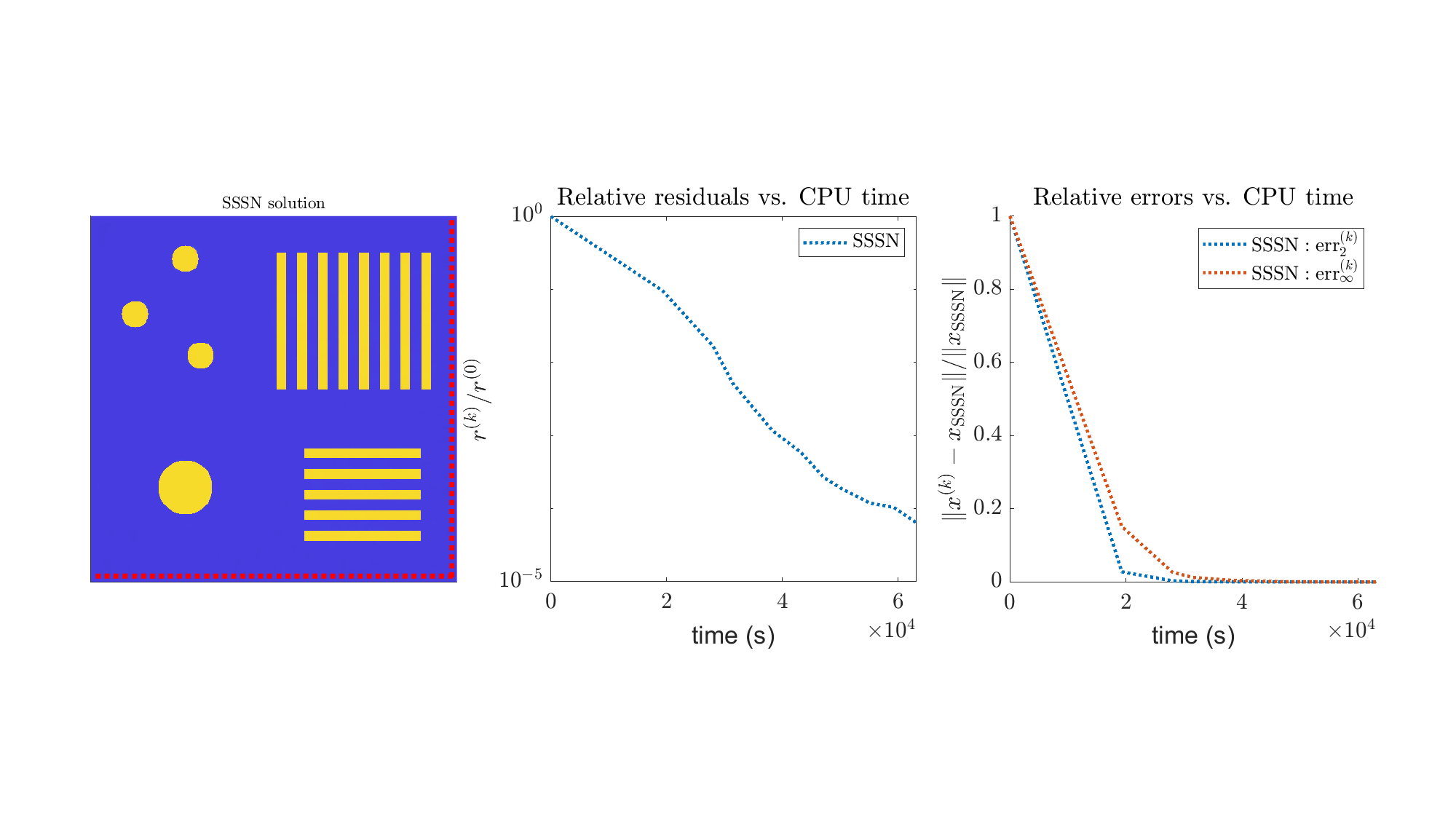}
    \caption{Test setting II (PAT): Reconstruction and comparison of relative residuals and relative errors, both vs.\ CPU time, for a high-resolution test configuration ($512 \times 512$ pixels, two-sided sensor layout). Here, ``SSSN'' stands for our \ssstar Newton approach, i.e., Algorithm~\ref{AlgALM}. The minima of the depicted error curves are: $9.6\cdot 10^{-5}$ for the relative residual, $0.0002$ for $\ee {\rm err}k_2$, and $0.0028$ for $\ee {\rm err}k_\infty$.}
    \label{fig_PAT_High_Res}
\end{figure}

\section{Conclusion}\label{sect_conclusion}

In this paper, we considered the efficient numerical minimization of Tikhonov functionals resulting from TV regularization of linear inverse problems. For this, we proposed a minimization approach based on the \ssstar Newton method, which uses the novel concept of graphical derivatives to generalize the classical Newton method to non-smooth, set-valued mappings. The proposed method is applicable to large-scale inverse problems, and is supported by strong convergence guarantees, including locally superlinear convergence. Numerical experiments on two large-scale tomographic imaging problems from X-ray CT and PAT demonstrated that our proposed approach is competitive with, and in terms of convergence speed and optimization accuracy clearly outperforms, existing state-of-the-art methods for TV regularization.

\section{Acknowledgments \& Support}

This research was funded in part by the Austrian Science Fund (FWF) SFB 10.55776/F68 ``Tomography Across the Scales'', project F6805-N36 (Tomography in Astronomy). For open access purposes, the authors have applied a CC BY public copyright license to any author-accepted manuscript version arising from this submission. This project has received funding from the European Research Council (ERC) under the European Union’s Horizon 2020 research and innovation programme (grant agreement No 101001417 - QUANTOM) and the Research Council of Finland (Flagship of Advanced Mathematics for Sensing Imaging and Modelling grant 358944).

\bibliographystyle{plain}
{\footnotesize
\bibliography{mybib,stefan}
}

\appendix

\section{Supplemental figures of numerical results}\label{appendix}

\begin{figure}[ht!]
    \centering
    \includegraphics[width=\textwidth]{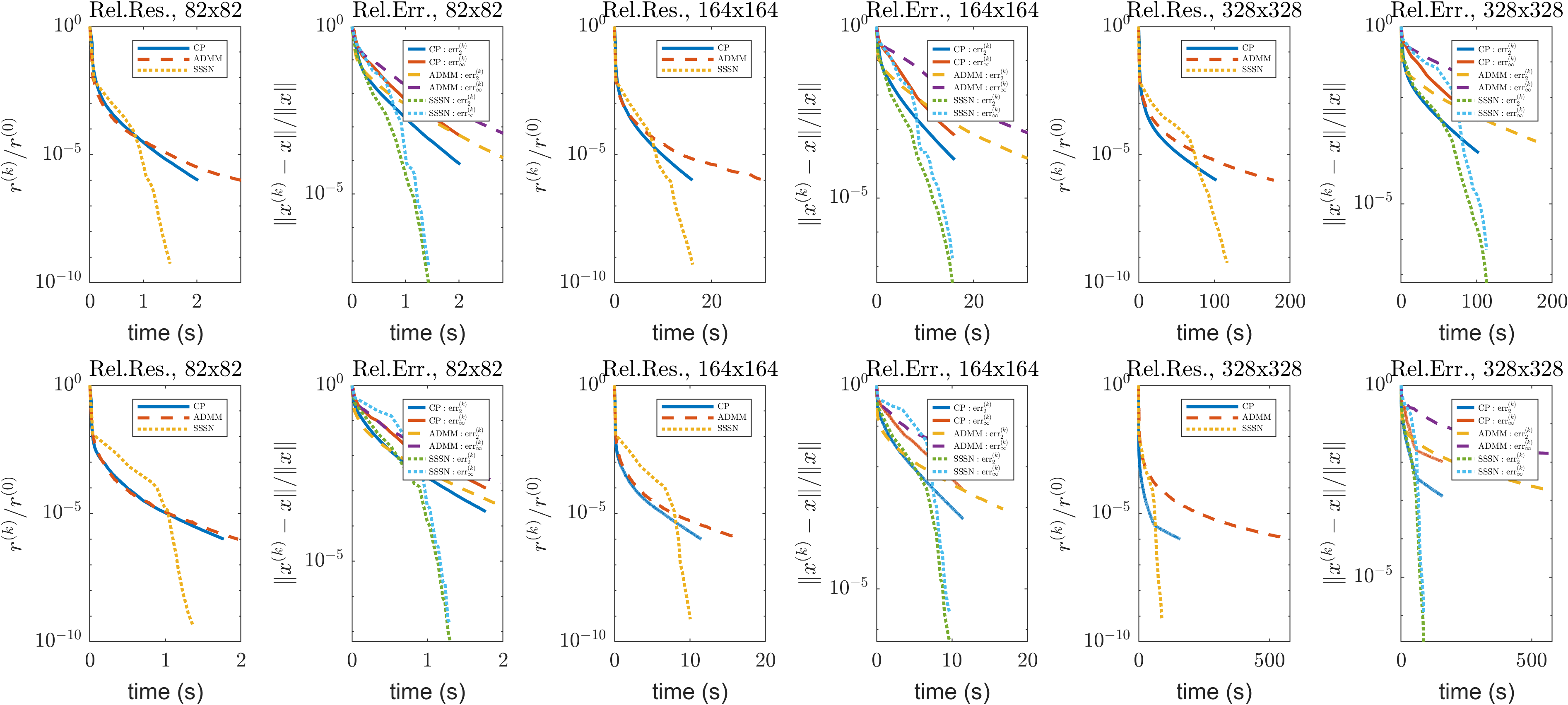}
    \caption{Test setting I (CT): Comparison of relative residuals and relative errors, both vs.\ CPU time, for different test configurations in the $100\%$ limited-angle case: pixel dimensions $82 \times 82$, $164 \times 164$, $328 \times 328$, and both $120$ (top row) and $20$ (bottom row) projections. Here, ``SSSN'' stands for our \ssstar Newton approach, i.e., Algorithm~\ref{AlgALM}, and ``CP'' stands for the Chambolle-Pock method, i.e., Algorithm~\ref{AlgChPo}.}
    \label{fig_A1}
\end{figure}

\begin{figure}[ht!]
    \centering
    \includegraphics[width=\textwidth]{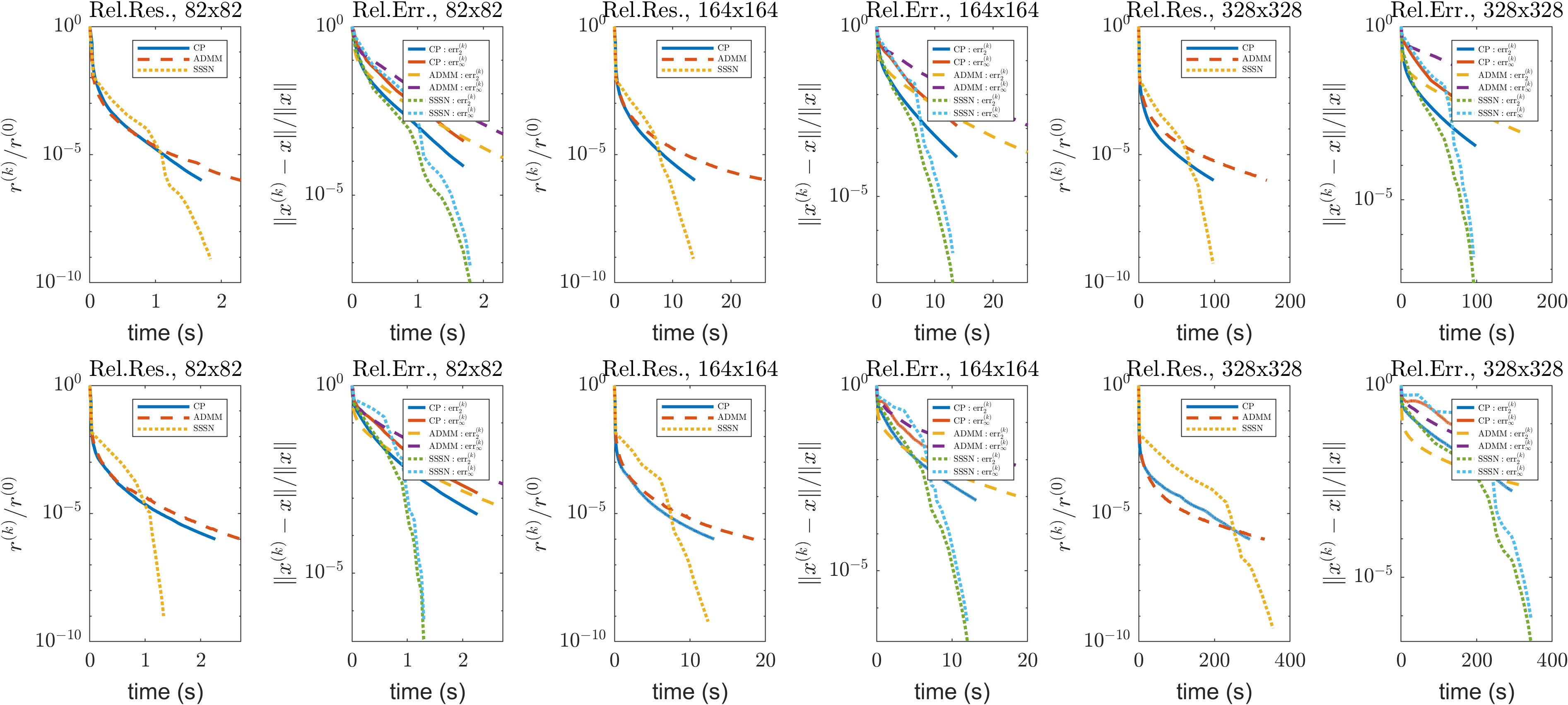}
    \caption{Test setting I (CT): Comparison of relative residuals and relative errors, both vs.\ CPU time, for different test configurations in the $75\%$ limited-angle case: pixel dimensions $82 \times 82$, $164 \times 164$, $328 \times 328$, and both $120$ (top row) and $20$ (bottom row) projections. Here, ``SSSN'' stands for our \ssstar Newton approach, i.e., Algorithm~\ref{AlgALM}, and ``CP'' stands for the Chambolle-Pock method, i.e., Algorithm~\ref{AlgChPo}.}
    \label{fig_A2}
\end{figure}

\begin{figure}[ht!]
    \centering
    \includegraphics[width=\textwidth]{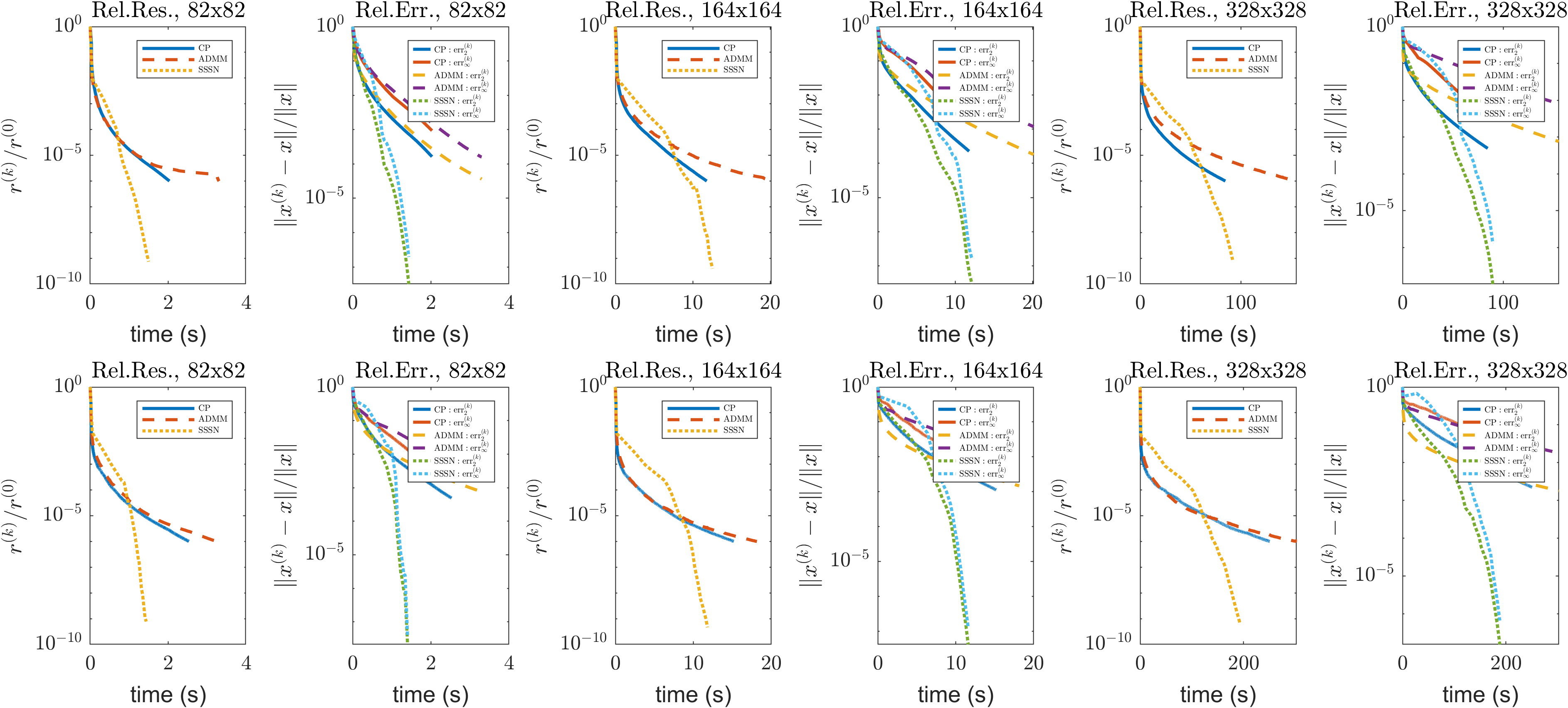}
    \caption{Test setting I (CT): Comparison of relative residuals and relative errors, both vs.\ CPU time, for different test configurations in the $50\%$ limited-angle case: pixel dimensions $82 \times 82$, $164 \times 164$, $328 \times 328$, and both $120$ (top row) and $20$ (bottom row) projections. Here, ``SSSN'' stands for our \ssstar Newton approach, i.e., Algorithm~\ref{AlgALM}, and ``CP'' stands for the Chambolle-Pock method, i.e., Algorithm~\ref{AlgChPo}.}
    \label{fig_A3}
\end{figure}

\begin{figure}[ht!]
    \centering
    \includegraphics[width=\textwidth]{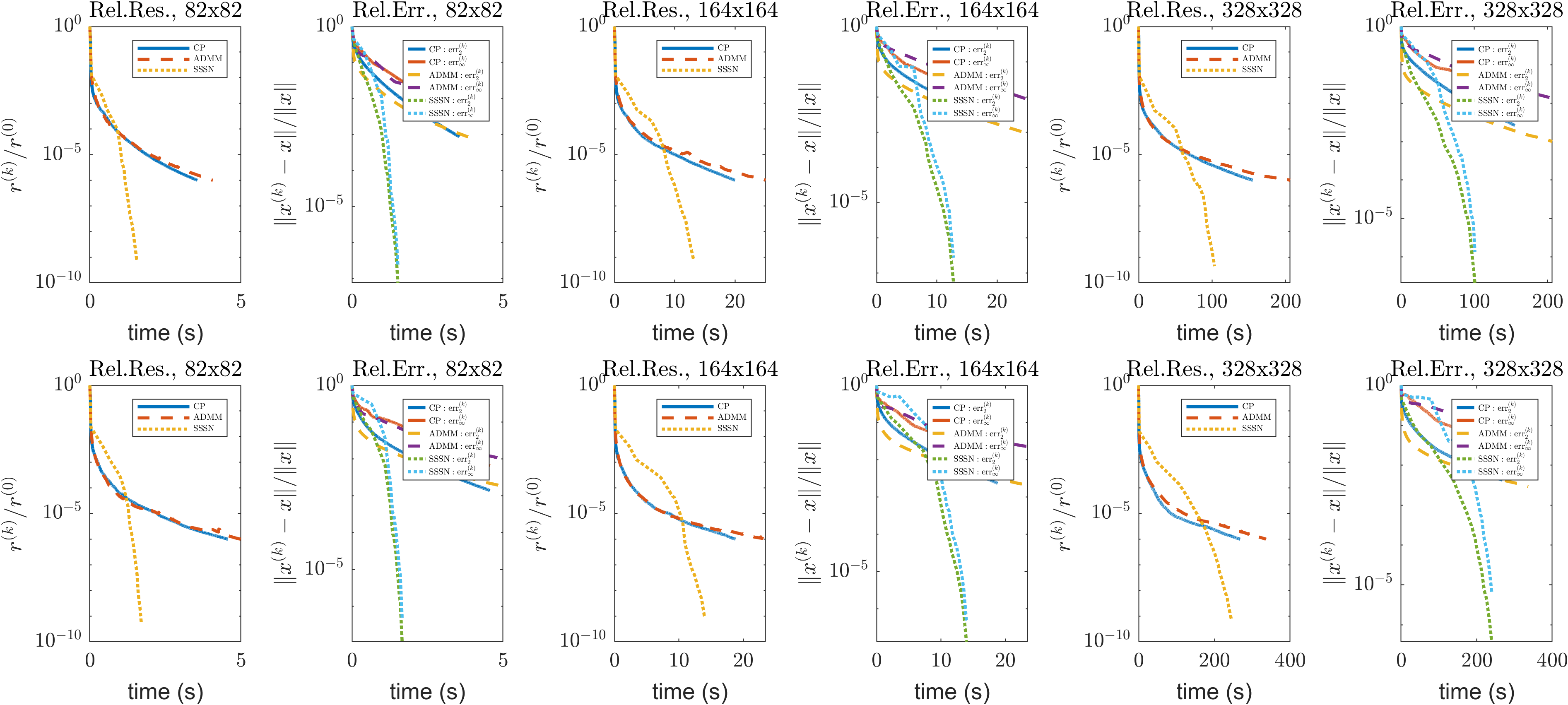}
    \caption{Test setting I (CT): Comparison of relative residuals and relative errors, both vs.\ CPU time, for different test configurations in the $25\%$ limited-angle case: pixel dimensions $82 \times 82$, $164 \times 164$, $328 \times 328$, and both $120$ (top row) and $20$ (bottom row) projections. Here, ``SSSN'' stands for our \ssstar Newton approach, i.e., Algorithm~\ref{AlgALM}, and ``CP'' stands for the Chambolle-Pock method, i.e., Algorithm~\ref{AlgChPo}.}
    \label{fig_A4}
\end{figure}

\begin{figure}[ht!]
    \centering
    \includegraphics[width=\textwidth]{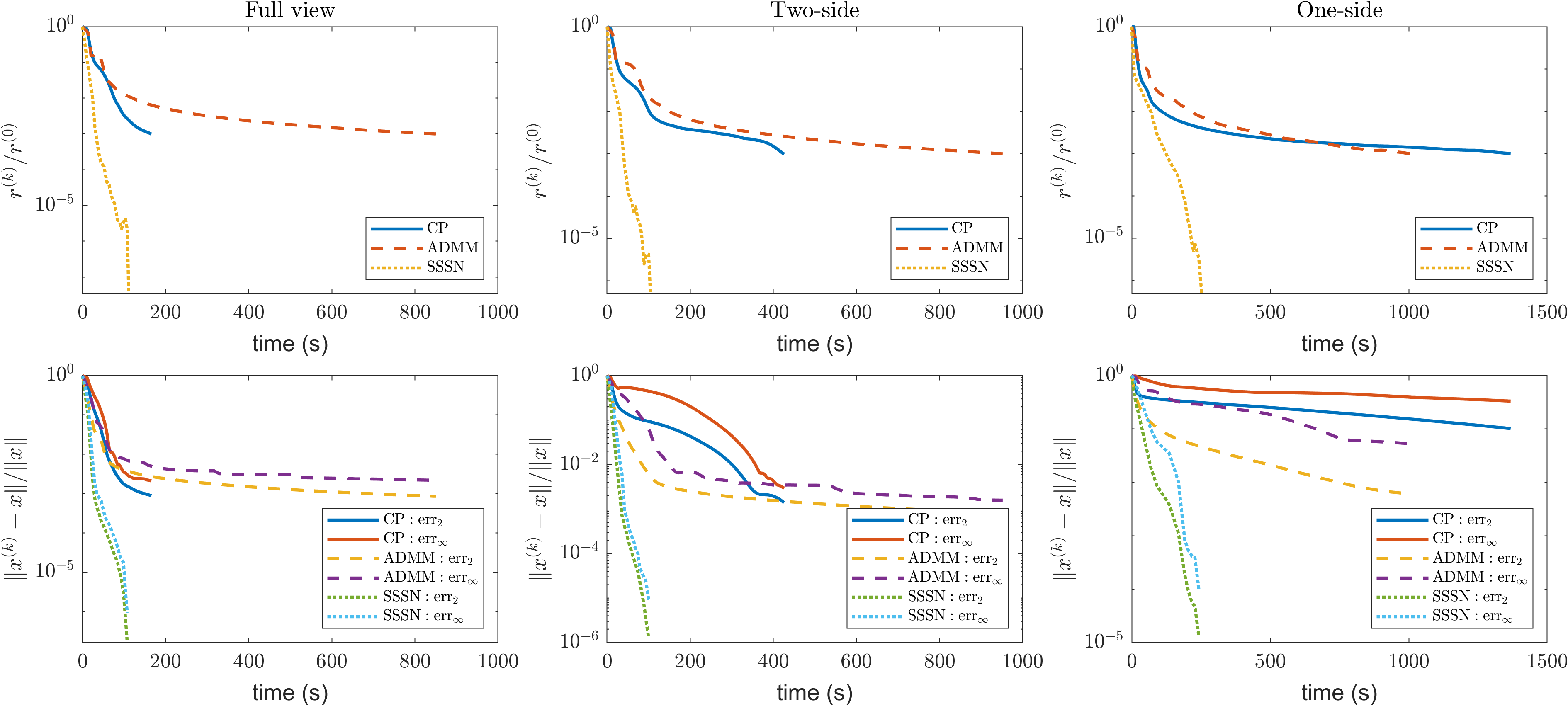}
    \caption{Test setting II (PAT): Comparison of relative residuals (top row) and relative errors (bottom row), both vs.\ CPU time, for different test configurations in the $128 \times 128$ pixel dimension case: Full-view (left), two-sided (middle), and one-sided (right) sensor layout. Here, ``SSSN'' stands for our \ssstar Newton approach, i.e., Algorithm~\ref{AlgALM}, and ``CP'' stands for the Chambolle-Pock method, i.e., Algorithm~\ref{AlgChPo}.}
    \label{fig_B1}
\end{figure}

\begin{figure}[ht!]
    \centering
    \includegraphics[width=\textwidth]{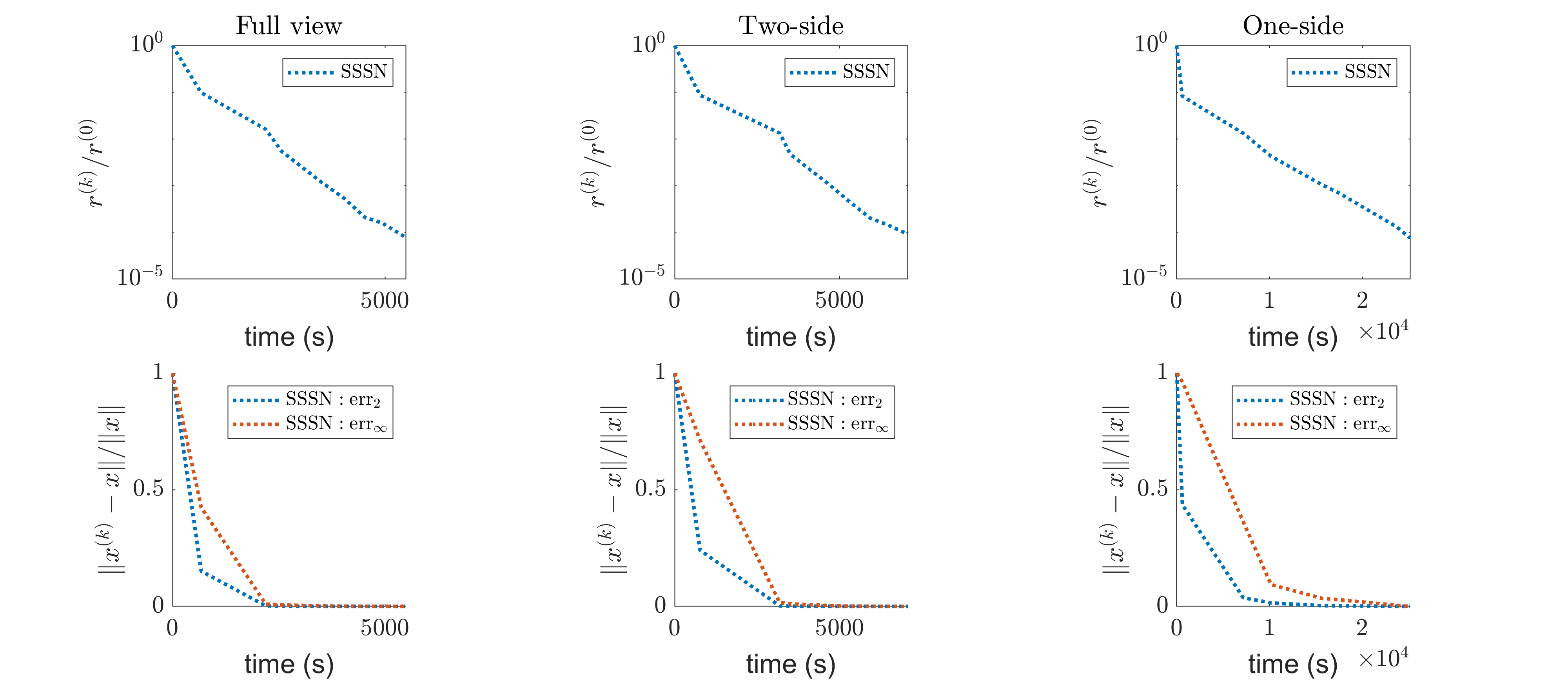}
    \caption{Test setting II (PAT): Comparison of relative residuals (top row) and relative errors (bottom row), both vs.\ CPU time, for different test configurations in the $256 \times 256$ pixel dimension (=medium-resolution) case: Full-view (left), two-sided (middle), and one-sided (right) sensor layout. Here, ``SSSN'' stands for our \ssstar Newton approach, i.e., Algorithm~\ref{AlgALM}.}
    \label{fig_C1}
\end{figure}

\begin{figure}[ht!]
    \centering
    \includegraphics[width=\textwidth]{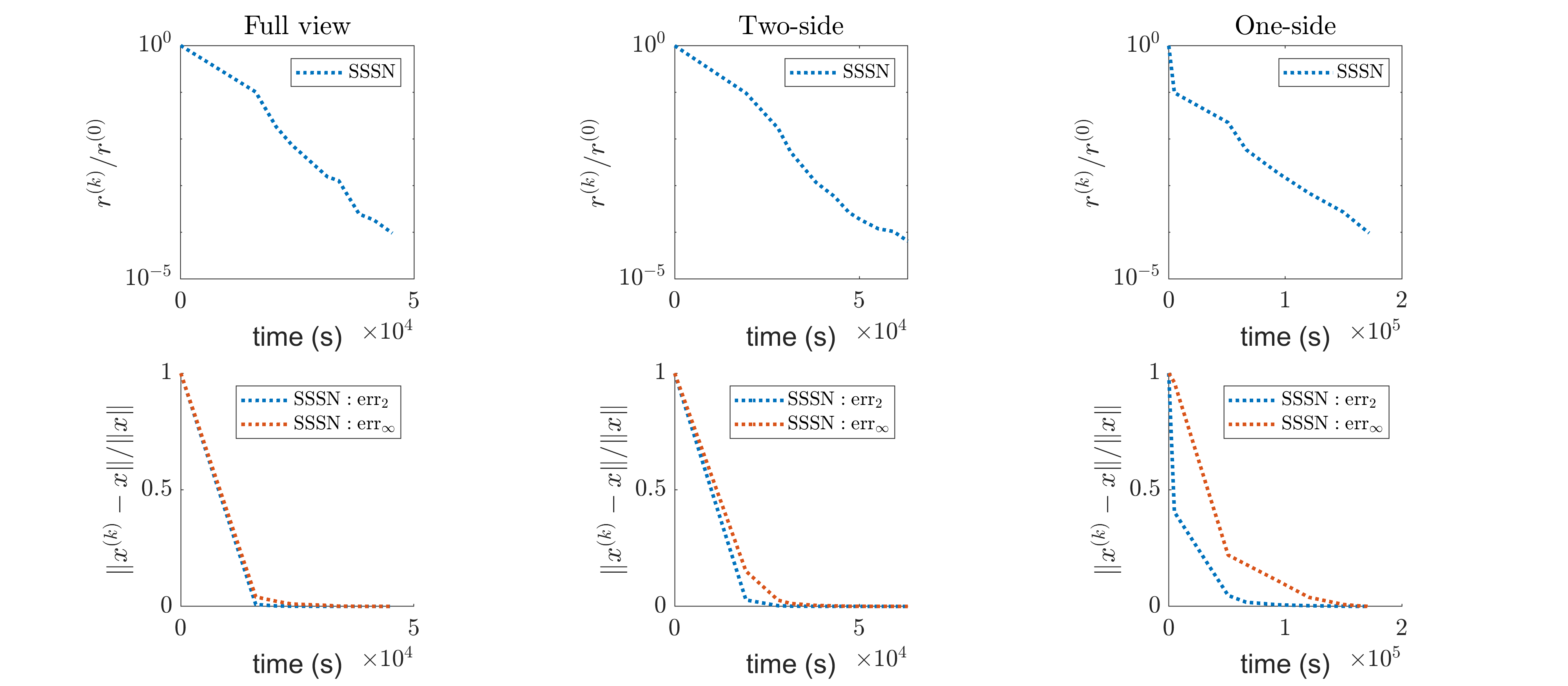}
    \caption{Test setting II (PAT): Comparison of relative residuals (top row) and relative errors (bottom row), both vs.\ CPU time, for different test configurations in the $512 \times 512$ pixel dimension (=high-resolution) case: Full-view (left), two-sided (middle), and one-sided (right) sensor layout. Here, ``SSSN'' stands for our \ssstar Newton approach, i.e., Algorithm~\ref{AlgALM}.}
    \label{fig_C2}
\end{figure}

\end{document}

%% file: header.tex
\usepackage{amsmath}
\usepackage{amsthm}
\usepackage{amssymb}
\usepackage{cite}
\usepackage{url}
\usepackage{footmisc}
\usepackage{graphicx}
\usepackage{epstopdf}
\usepackage{chngcntr}
\usepackage{enumitem}
\usepackage{hhline}
\usepackage{array}
\usepackage{hyperref}
\usepackage{color}
\usepackage{multirow}
\usepackage{tabularx}
\usepackage{tikz}

\hypersetup{colorlinks=false}

\makeatletter
\let\@fnsymbol\@arabic
\makeatother

\tikzset{every picture/.style={line width=0.75pt}} 
\usetikzlibrary{patterns}

\tikzset{
    pattern size/.store in=\mcSize, 
    pattern size = 5pt,
    pattern thickness/.store in=\mcThickness, 
    pattern thickness = 0.3pt,
    pattern radius/.store in=\mcRadius, 
    pattern radius = 1pt
}
\makeatletter
\pgfutil@ifundefined{pgf@pattern@name@_7gn8i7dj0}{
    \pgfdeclarepatternformonly[\mcThickness,\mcSize]{_7gn8i7dj0}
    {\pgfqpoint{0pt}{0pt}}
    {\pgfpoint{\mcSize+\mcThickness}{\mcSize+\mcThickness}}
    {\pgfpoint{\mcSize}{\mcSize}}
    {
    \pgfsetcolor{\tikz@pattern@color}
    \pgfsetlinewidth{\mcThickness}
    \pgfpathmoveto{\pgfqpoint{0pt}{0pt}}
    \pgfpathlineto{\pgfpoint{\mcSize+\mcThickness}{\mcSize+\mcThickness}}
    \pgfusepath{stroke}
    }
}
\makeatother

\if@twoside
\else 
\fi

\numberwithin{equation}{section}
\counterwithin{figure}{section}
\counterwithin{table}{section}

\theoremstyle{plain}
\newtheorem{theorem}{Theorem}[section]

\newtheorem{proposition}[theorem]{Proposition}
\newtheorem{algorithm}  [theorem]{Algorithm}

\theoremstyle{definition}
\newtheorem{definition}{Definition}[section]

\theoremstyle{remark}
\newtheorem*{remark}{Remark}

\newcommand{\norm}[1]{\left\|#1\right\|}
\newcommand{\abs}[1]{\left\vert#1\right\vert}
\newcommand{\spr}[1]{\left\langle\,#1\,\right\rangle}
\newcommand{\kl}[1]{\left(#1\right)}
\newcommand{\Kl}[1]{\left\{#1\right\}}

\newcommand{\op}[1]{\operatorname{#1}}

\newcommand{\R}{\mathbb{R}} 

\newcommand{\mfunc}[1]{\text{#1} \,}
\def\div{\mfunc{div}} 

\newcommand{\xad}{x_\alpha^\delta}

\newcommand{\eps}{\varepsilon}

\newcommand{\Tad}{T_\alpha^\delta}

\newcommand{\xD}{x^\dagger}

\newcommand{\BV}{{\operatorname{BV}}}

\newcommand{\LtO}{{L_2(\Omega)}}

\newcommand{\prox}{\operatorname{prox}}

\newcommand{\tto}{\rightrightarrows}

\newcommand{\dist}[1]{{\rm dist}\kl{#1}}

\newcommand{\mv}{\,\mid\,}

\newcommand{\B}{{\cal B}}

\newcommand{\Sp}{{\mathcal S}}
\newcommand{\Z}{{\cal Z}}

\newcommand{\longsetto}[1]{\mathop{\longrightarrow}\limits^{#1}}
\newcommand{\skalp}[1]{\langle #1\rangle}

\newcommand{\xb}{\bar x}

\newcommand{\zb}{\bar z}
\newcommand{\ub}{\bar u}
\newcommand{\vb}{\bar v}

\newcommand{\zba}{{\bar z^\ast}}

\newcommand{\OO}{{\cal O}}
\newcommand{\argmin}{\mathop{\rm arg\,min}}

\newcommand{\ri}{{\rm ri\,}}

\newcommand{\gph}{\mathrm{gph}\,}

\newcommand{\myvec}[1]{\begin{pmatrix}#1\end{pmatrix}}
\newcommand{\SCD}{SCD\ }

\newcommand{\ssstar}{semismooth$^{*}$ }
\newcommand{\ee}[2]{{#1}^{(#2)}}

\newcommand{\rge}{{\rm rge\;}}

\renewcommand{\ker}{{\rm ker\;}}

\newcommand{\Mor}[1]{e_{#1 \norm{\cdot}_1}}
\newcommand{\Prox}[1]{{\rm Prox}_{#1 \norm{\cdot}_1}}
\newcommand{\lag}{{\cal L}}

\newlength{\myAlgBox}
\newcommand{\row}{\operatorname{row}}
\newcommand{\col}{\operatorname{col}}
